\documentclass[11pt]{article}

\usepackage[T1]{fontenc}
\usepackage{lmodern}
\usepackage[margin=1.15in]{geometry}
\usepackage{amsmath,amssymb,amsthm,mathtools}
\usepackage{enumitem}
\usepackage{xcolor}
\usepackage[colorlinks=true,linkcolor=blue!55!black,citecolor=blue!55!black,
            urlcolor=blue!55!black]{hyperref}
\usepackage[nameinlink,capitalize,noabbrev]{cleveref}

\newtheorem{theorem}{Theorem}[section]
\newtheorem{proposition}[theorem]{Proposition}
\newtheorem{lemma}[theorem]{Lemma}

\theoremstyle{definition}
\newtheorem{definition}[theorem]{Definition}

\theoremstyle{plain}
\newtheorem{mainresult}{Theorem}
\newtheorem{mainaddendum}[mainresult]{Addendum}
\newtheorem{maincorollary}[mainresult]{Corollary}
\theoremstyle{plain}
\newtheorem{mainremark}[mainresult]{Remark}
\theoremstyle{plain}
\crefname{mainresult}{theorem}{theorems}
\Crefname{mainresult}{Theorem}{Theorems}
\crefname{mainaddendum}{addendum}{addenda}
\Crefname{mainaddendum}{Addendum}{Addenda}
\crefname{maincorollary}{corollary}{corollaries}
\Crefname{maincorollary}{Corollary}{Corollaries}
\crefname{mainremark}{remark}{remarks}
\Crefname{mainremark}{Remark}{Remarks}
\numberwithin{equation}{section}

\newcommand{\R}{\mathbb R}
\newcommand{\Wu}{W^{u}}
\newcommand{\Ws}{W^{s}}
\newcommand{\Wou}{W^{0u}}
\newcommand{\Wos}{W^{0s}}
\newcommand{\Wcu}{W^{cu}}
\newcommand{\Wcs}{W^{cs}}
\newcommand{\ds}{d^{s}}
\newcommand{\du}{d^{u}}

\title{Rigidity from the strong unstable foliation for higher-dimensional Anosov and partially hyperbolic flows}
\author{Andrey Gogolev}
\date{\today}

\begin{document}
\maketitle
\abstract{Recently Burniol Clotet \cite{Burniol} proved that two transitive Anosov flows on
3-manifolds with equivalent strong unstable
foliations must be conjugate up to time rescaling. We further explore the mechanism of Burniol Clotet's argument and treat the case of higher-dimensional Anosov flows and their isometric
extensions by compact Lie groups. }

\section{Results}
\label{sec:results}

Let $M$ be a closed manifold and let $X^t\colon M\to M$ be a $C^r$, $r\ge 1$, Anosov flow. We will consider $X^t$ to be the ``model'' flow and another Anosov flow $Y^t$ related to the model $X^t$ in the following way.
We say that their
strong unstable foliations are \emph{equivalent} if there exists a
homeomorphism $\varphi\colon N\to M$ such that
\[
  \varphi\bigl(\Wu_Y(y)\bigr)=\Wu_X\bigl(\varphi(y)\bigr),
  \qquad y\in N.
\]
We call such a homeomorphism an \emph{unstable equivalence}. The question is then to recover a direct link between dynamics of $X^t$ and $Y^t$.

The genesis of this type of questions goes back at least to Ratner who proved that measurable isomorphisms between the horocycle flows of
finite-volume hyperbolic surfaces are algebraic~\cite{Ratner} and, hence, the surfaces must be isometric. This result was later generalized to higher dimensional hyperbolic manifolds by Flaminio and Spatzier~\cite{Flaminio, FlaminioSpatzier}. The more direct predecessor to the topological question we consider here is the work of Marcus~\cite{Marcus}, who showed that if $X^t$ and $Y^t$ are geodesic flows on negatively curved surfaces which admit an unstable equivalence then $Y^t$ is conjugate to a rescaled flow $X^{\lambda t}$, where $\lambda$ is a fixed positive constant. In the same paper he claimed the same theorem for 3-dimensional suspension flows and posed the general question. Being unaware of Marcus' work Abe also proved the same result~\cite{Abe}. The proofs have a lot in common with Marcus' being more geometric, he argues that the equivalence preserves weak unstable foliation by looking at unstable horocycles on the universal cover of the surface, and Abe's being more dynamical. Recently Burniol Clotet generalized the arguments and resolved the question of Marcus in the setting of transitive 3-dimensional Anosov flows~\cite{Burniol}. Here we built upon the arguments of Burniol Clotet and some ideas of~\cite{GogolevRodriguezHertz,GogolevRodriguezHertz25} and pursue the same question in higher dimensions.

\begin{mainresult}
\label{thm:general-anosov}
Let $X^t\colon M\to M$ and $Y^t\colon N\to N$ be transitive
Anosov flows.  Additionally assume that the strong unstable distribution
$E^u_X$ is $C^1$.  If their strong unstable foliations are equivalent, then
we have the following dichotomy:
\begin{enumerate}[label=\textup{(\roman*)}]
\item either there exist $\lambda>0$ and a homeomorphism $\psi\colon N\to M$ such that
  \[
    \psi\circ Y^t=X^{\lambda t}\circ\psi,
    \qquad t\in\R;
  \]
\item or there is a non-zero continuous $DX^t$-invariant subbundle
  $E\subset E^s_X$ which integrates to an $X^t$-invariant foliation
  $\mathcal F\subset\Ws_X$; moreover, $E\oplus E^u_X$ integrates to a
  $X^t$-invariant foliation subfoliated by both $\mathcal F$ and $\Wu_X$.
\end{enumerate}
%The second alternative is the stable/unstable dual of the jointly integrable subbundle appearing in the Subbundle Theorem of Gogolev--Rodriguez Hertz~\cite{GogolevRodriguezHertz}.
\end{mainresult}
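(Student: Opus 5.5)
The plan follows Burniol Clotet's mechanism: first show that $\varphi$ carries weak unstable leaves to weak unstable leaves, and then build a conjugacy from the induced action on the flow direction. When the first step fails, the failure itself will produce the foliation $\mathcal F$ of alternative (ii).

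\textbf{Step 1: the deviation from the weak unstable foliation.} Since $\varphi$ maps $\Wu_Y$-leaves onto $\Wu_X$-leaves, it transports the $Y$-orbit of a point $y$ to a curve $t\mapsto\varphi(Y^t y)$. Each point of this curve lies on the strong unstable leaf $\Wu_X(\varphi(Y^ty))$. For $z\in\Wu_Y(y)$, the points $\varphi(Y^t z)$ and $\varphi(Y^t y)$ lie on a common $\Wu_X$-leaf for every $t$.

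Use local product structure for $X$. Near $x=\varphi(y)$, write $\varphi(Y^t y)$ for small $t$ as the point reached from $x$ by moving first along $\Wu_X$, then along the flow by some time $\tau(t)$, then along $\Ws_X$ by a stable displacement $s(t)\in\Ws_X(x)$. The continuous family $\{\varphi(Y^ty)\}$ then records a "stable holonomy defect" $\Delta_y\subset\Ws_X(x)$. We take $\Delta_y$ to be the set of stable positions reachable by sliding along $\varphi$-images of $Y$-orbits and $\Wu$-leaves.

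\textbf{Step 2: invariance and the dichotomy.} Push everything forward in time. Because $Y^t$ preserves $\Wu_Y$ and $X^t$ preserves $\Wu_X$, the conjugated maps $\varphi\circ Y^t\circ\varphi^{-1}$ preserve $\Wu_X$. Comparing them with $X^{\lambda t}$ shows that the collection of defects is invariant under $X^t$: $X^t\Delta_y=\Delta_{y'}$ for suitable $y'$.

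Define $E(x)\subset E^s_X(x)$ as the smallest linear subspace such that the $C^1$ unstable holonomies between stable transversals preserve the tangent cones of the defects. The $C^1$ hypothesis on $E^u_X$ makes the unstable holonomies $C^1$, so these tangent data are well defined and $DX^t$-invariant. By transitivity, $\dim E$ is constant on a dense orbit and hence everywhere, and $E$ is continuous.

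There are two cases.
\begin{itemize}
\item If $E=0$, the defect is trivial. Then $\varphi$ maps $\Wu_Y$-saturated orbit segments into $\Wou_X$-leaves, so $\varphi(\Wou_Y)=\Wou_X$.
\item If $E\neq 0$, the defect sets themselves, swept by unstable holonomy, give local surfaces tangent to $E\oplus E^u_X$. Their intersections with $\Ws_X$-leaves integrate $E$ to an $X^t$-invariant foliation $\mathcal F$. The saturation of $\mathcal F$ by $\Wu_X$ is a foliation tangent to $E\oplus E^u_X$, subfoliated by both $\mathcal F$ and $\Wu_X$. This is alternative (ii).
\end{itemize}

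\textbf{Step 3: conjugacy when weak unstable leaves are preserved.} Assume $\varphi(\Wou_Y)=\Wou_X$. Then $\varphi$ induces a map between the orbit spaces of the unstable leaves, that is, between the quotients $\Wou/\Wu$, each of which is a line. The flows act on these lines by translation. On each leaf, $\varphi$ gives an orientation-preserving homeomorphism $h_y$ of $\R$ that conjugates leafwise returns.

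Periodic orbits of $Y$ map to curves in $\Wou_X$ of periodic leaves, so periods correspond. Following Burniol Clotet's cocycle argument, define $\lambda$ as the ratio of periods. Using transitivity and density of a single orbit, show that $\lambda$ is constant through a Livshitz-type argument applied to the continuous cocycle $\tau(t,y)$ measuring $\varphi$-induced time along orbits. Then straighten $\varphi$ along the flow, setting $\psi(y)=X^{\theta(y)}\varphi(y)$, where $\theta$ solves the corresponding cohomological equation. This gives $\psi\circ Y^t=X^{\lambda t}\circ\psi$.

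\textbf{Main obstacle.} The hardest step is Step 2: showing that the holonomy defect is genuinely a $C^0$ linear object, namely a continuous invariant subbundle $E$ that integrates. In higher dimensions, $\Ws_X$ has dimension greater than one, so the defect may be an irregular set rather than a curve.

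My first attempt is to use the $C^1$ regularity of unstable holonomy, which follows from $E^u_X\in C^1$. With it, I would take the linear span of limits of secant directions of the defect and show this span is $DX^t$-invariant. Domination of the backward contraction on $E^s$ should then force the defect sets themselves to be $C^1$ submanifolds tangent to $E$, in the spirit of the Subbundle Theorem of Gogolev–Rodriguez Hertz.
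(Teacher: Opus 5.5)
There is a genuine gap, and it is more basic than the regularity issue you flag. Nothing in Steps 1--2 connects a failure of $\varphi$ to preserve $\Wou$ with any joint integrability of $\Ws_X$ and $\Wu_X$. So the implication ``$E=0\Rightarrow$ trivial defect'' (equivalently, ``non-trivial defect $\Rightarrow E\neq0$'') is asserted rather than proved. At a fixed small time, the stable displacement $p(y,t)=[y,\widehat Y^t(y)]$ carries no integrability information. The reason is that along $z\in\Wu(y)$ the points $p(z,t)$ differ from the single leaf $\Wu(\widehat Y^t(y))$ by a varying flow time $\beta(z,t)$.

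The missing idea is renormalization. For non-centered $t_n\to0$, flow backwards by the first time $T_n$ at which $\ds(X^{-T_n}y_0,X^{-T_n}p(y_0,t_n))$ reaches a fixed $L$. Backward contraction along $\Wu$ makes $X^{-T_n}\circ\widehat Y^{t_n}\circ X^{T_n}$ uniformly $\du$-close to a weak-stable holonomy. Since $\beta\to0$, that holonomy is in turn close to the pure stable projection. In the limit, a whole unstable plaque through some $y'\neq y$ lies in the local stable saturation of an unstable plaque through $y$, i.e.\ $y'\in\mathcal I(y)\setminus\{y\}$.

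You also never invoke minimality of $\Wu_X$. It becomes available after disposing of the jointly integrable case (where (ii) holds with $E=E^s$) and applying the Anosov alternative. Transitivity is not a substitute. Minimality is what makes ``$p(y,t)=y$ at one point'' propagate to all of $M$, and it yields the centered/non-centered dichotomy on which everything rests.

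Second, your $E$ (``smallest subspace such that unstable holonomies preserve tangent cones of the defects'') is not well defined. The claim ``$\dim E$ is constant on a dense orbit, hence everywhere, and $E$ is continuous'' is a non sequitur: semicontinuity plus a dense orbit only gives an inequality. The hope that ``domination of backward contraction on $E^s$'' makes the defect sets $C^1$ also has no basis, since no domination inside $E^s$ is assumed.

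The workable route defines $E$ from $X$ alone, $E(y)=\bigcap_{z\in\Wu(y)}(D_y\operatorname{Hol}^u_{y,z})^{-1}E^s(z)$. This subbundle is $DX^t$- and holonomy-invariant. Its dimension is upper semicontinuous, and the minimum locus is open and $\Wu$-saturated, so the dimension is constant by minimality. It is cut out by finitely many $C^1$ holonomies, and their pulled-back stable foliations give continuity of $E$, the foliation $\mathcal F$, and integrability of $E\oplus E^u_X$. The relation $y'\in\mathcal I(y)$ is used only to show $E\neq0$. If $E=0$, these finitely many pulled-back foliations separate nearby points. Yet the forward iterates $X^{t_n}y$ and $X^{t_n}y'$ converge together while sharing all their plaques, a contradiction.

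Finally, Step 3 fails as written. If $\psi=X^{\theta}\circ\varphi$ satisfied $\psi\circ Y^t=X^{\lambda t}\circ\psi$, then $\varphi$ would map $Y$-orbits into $X$-orbits, which an unstable equivalence need not do. In the centered case no Liv\v{s}ic argument is needed. The time coordinate $\tau(y,t)$ is constant on $\Wu$-leaves, hence constant by minimality, and it is additive, hence equal to $\lambda t$. Positivity $\lambda>0$ must then be proved, not assumed. The correction is along unstable leaves, $\psi=\lim_{T\to\infty}X^{-\lambda T}\circ\varphi\circ Y^T$, and the reverse limit provides the inverse.
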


Without the $C^1$ assumption on $E^u_X$, one still obtains the following result.
\begin{mainaddendum}
\label{add:nondegenerate}
Let $X^t\colon M\to M$ and $Y^t\colon N\to N$ be transitive
Anosov flows.  Suppose that $X^t$ is locally
$su$-non-degenerate.  If their strong unstable foliations are equivalent,
then there exist $\lambda>0$ and a homeomorphism $\psi\colon N\to M$ such that
\[
  \psi\circ Y^t=X^{\lambda t}\circ\psi,
  \qquad t\in\R.
\]
\end{mainaddendum}
Here local $su$-non-degeneracy roughly means that stable and unstable
directions do not integrate together on any local configuration; see
\Cref{def:local-su-nondegenerate} for the precise definition.

It is unknown to the author if the second alternative in~\Cref{thm:general-anosov} or the $su$-degenerate case of the addendum can be further promoted to joint integrability of strong distributions $E^s_X$ and $E^u_X$. Still it is possible under some additional assumptions.

\begin{mainresult}
\label{thm:codimension-one}
Let $X^t\colon M\to M$ and $Y^t\colon N\to N$ be transitive
Anosov flows.  Suppose that their strong stable foliations are one-dimensional.
If their strong unstable foliations are equivalent, then there exist
$\lambda>0$ and a homeomorphism $\psi\colon N\to M$ such that
\[
  \psi\circ Y^t=X^{\lambda t}\circ\psi,
  \qquad t\in\R.
\]
\end{mainresult}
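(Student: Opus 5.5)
The plan is to reduce \Cref{thm:codimension-one} to the dichotomy of \Cref{thm:general-anosov}, and then to rule out its second alternative using the one-dimensionality of the strong stable foliation.

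\textbf{Regularity.} The first task is to check the hypothesis that $E^u_X$ is $C^1$. When $E^s_X$ is one-dimensional, the weak unstable foliation $\Wcu_X$ has codimension one. Codimension-one weak foliations of Anosov flows are $C^{1}$; this is the classical Hirsch--Pugh--Shub regularity, cf.\ Hasselblatt. I would then like to pass from $\Wcu_X$ to the strong unstable distribution. I expect this regularity step to be the main technical obstacle. The strong unstable distribution $E^u_X$ itself need not be $C^1$ in general, because it is obtained by intersecting $E^{cu}_X$ with a complement. I would handle this in one of two ways.
\begin{enumerate}[label=\textup{(\alph*)}]
\item Avoid $E^u_X$ entirely. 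Inspect the proof of \Cref{thm:general-anosov} to see that the $C^1$ hypothesis is only used to show that the degenerate alternative yields an honest invariant foliation. Then replace it by the $C^1$ regularity of $E^{cu}_X$, or by local $su$-non-degeneracy via \Cref{add:nondegenerate}.
\item Alternatively, run the argument of \Cref{add:nondegenerate} directly. Either $X^t$ is locally $su$-non-degenerate, in which case we are done, or a local configuration on which $E^s_X$ and $E^u_X$ jointly integrate exists.
\end{enumerate}

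\textbf{Ruling out the degenerate case.} Here the one-dimensionality is decisive. Any nonzero continuous subbundle $E\subset E^s_X$ must equal $E^s_X$. So alternative (ii) of \Cref{thm:general-anosov}, or the $su$-degenerate case of the addendum, says that $E^s_X\oplus E^u_X$ integrates to a flow-invariant foliation subfoliated by $\Ws_X$ and $\Wu_X$. In the local version, this holds on an open set, and by invariance under $X^t$ and transitivity it propagates to all of $M$.

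This is the joint integrability situation. By Plante's theorem, $X^t$ is then a suspension of an Anosov diffeomorphism with constant roof function. For the codimension-one case one can alternatively use the Verjovsky/Ghys classification. In particular, the strong unstable foliation of $X^t$ is the unstable foliation of a hyperbolic toral automorphism (Franks--Newhouse) suspended with constant time.

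\textbf{Suspension case.} It remains to treat this case directly. The unstable equivalence $\varphi$ carries $\Wu_Y$ to $\Wu_X$. The leaves of $\Wu_X$ accumulate only within the fibers of the fibration $M\to S^1$, namely the closures of unstable leaves are the torus fibers. Using this, I would show that $\varphi^{-1}$ of the fibration gives an $\Wu_Y$-saturated fibration of $N$ by closed codimension-one sets, transverse to $Y^t$.

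From this, $Y^t$ is itself a suspension, of an Anosov diffeomorphism $g$ conjugate on fibers to the return map $f$ of $X^t$; the conjugacy is induced by $\varphi$ composed with a holonomy. The strong unstable foliation of $Y^t$ must also be fiberwise. Otherwise $\varphi$ could not preserve the closed fibers, since it maps $\Wu_Y$-leaf closures to $\Wu_X$-leaf closures. This forces the roof function of $Y^t$ to be cohomologous to a constant $c$.

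Choosing $\lambda$ so that $\lambda c$ matches the roof of $X^t$ yields the conjugacy $\psi$ with $\psi\circ Y^t=X^{\lambda t}\circ\psi$. This last step may alternatively be obtained by reinserting into the argument of \Cref{thm:general-anosov}. Once both flows are constant-roof suspensions, $E^u_X$ is smooth and the first alternative can be read off from the proof.
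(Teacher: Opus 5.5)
Your reduction has a genuine gap at the step you flagged, and neither workaround closes it. As you note, $\dim E^s_X=1$ only makes the codimension-one weak unstable foliation $\Wou_X$ of class $C^1$; $E^u_X$ itself need not be $C^1$. So \Cref{thm:general-anosov} is unavailable. Its proof needs differentiability of the strong unstable holonomy $\operatorname{Hol}^u$ between weak stable leaves, including the time component, to define $E_\rho$, and $C^1$ regularity of $\Wou_X$ does not substitute for that. Your dimension count would be a valid shortcut only when $E^u_X$ happens to be $C^1$.

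Workaround (b) fails for a different reason. The negation of local $su$-non-degeneracy (\Cref{def:local-su-nondegenerate}) gives a single pair $w\in\mathcal I(x)\setminus\{x\}$, not an open set on which $E^s_X\oplus E^u_X$ integrates. Your sentence ``in the local version, this holds on an open set'' is exactly the unproved step, and the paper notes that promoting $su$-degeneracy to joint integrability is open in general. Separately, transitivity only makes an open invariant set dense. To conclude $U=M$ you need every orbit closure to meet $U$, or $\Wu$-saturation of $U$ together with minimality.

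The missing idea is to argue with the one-parameter family of displacements produced by the equivalence itself, not with an abstract degenerate pair; this is \Cref{lem:codim-one-joint-integrability}.
\begin{enumerate}[label=\textup{(\arabic*)}]
\item If $\Ws_X,\Wu_X$ are not jointly integrable, then $\Wu_X$ is minimal. Suppose non-centered times $t_n$ accumulate at $0$, and renormalize by $X^{-T_n}$ as in \Cref{lem:off-diagonal-su}.
\item The map $s\mapsto X^{-T_n}(p(y_0,s))$, for $s$ between $0$ and $t_n$, is a continuous path in a one-dimensional stable leaf from $y_n$ to $y_n'$. By the intermediate value theorem every point of the limit interval $J$ is reached with times $s_n\to0$. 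Hence $J\subset\mathcal I(y)$.
\item Small positive and negative times displace $y_0$ to opposite sides. Indeed, $p(y_0,-a)=p(y_0,b)$ would make $a+b$ centered by \Cref{lem:centered-times}, contradicting \Cref{lem:centered-accumulation}.
\item So $y$ is interior to $J\cup\bar J\subset\mathcal I(y)$, giving an honest local $su$-product disk. Since $y_0$ was arbitrary, every orbit closure meets the open invariant set of such points, which yields joint integrability, a contradiction.
\item Hence all small times are centered, and \Cref{prop:general-centered} gives the conjugacy, with no regularity of $E^u_X$ used.
\end{enumerate}

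In the jointly integrable case your outline matches the paper's up to the last step. However, $\varphi$ composed with a holonomy does not conjugate the return maps. The paper shows that $\varphi$ intertwines the monodromies and then applies Franks' classification. You also cannot ``read off'' the conjugacy from the proof of \Cref{thm:general-anosov} there. For a constant-roof suspension $\Wu_X$ is not minimal, and both \Cref{lem:centered-times} and \Cref{prop:general-centered} rely on minimality.
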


\begin{maincorollary}
\label{cor:generic-conservative}
Assume that $M$ is equipped with a smooth volume and has dimension at least $4$.  Let
$\mathcal A(M)$ be the space of smooth volume-preserving Anosov
flows which have one-dimensional strong
unstable subbundle.  There exists a $C^1$-open and $C^\infty$-dense subset
$\mathcal V\subset\mathcal A(M)$ with the following property.
If $X^t\in\mathcal V$ and $Y^t\in\mathcal A(M)$ have equivalent
strong unstable foliations, then there exist $\lambda>0$ and a
$C^1$ diffeomorphism $\psi\colon M\to M$ such that
\[
  \psi\circ Y^t=X^{\lambda t}\circ\psi,
  \qquad t\in\R.
\]
\end{maincorollary}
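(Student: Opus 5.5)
We plan to derive the corollary from \Cref{thm:general-anosov} (or \Cref{add:nondegenerate}), combined with a genericity statement ruling out the second alternative and a regularity upgrade for the resulting conjugacy.

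\textbf{Choice of $\mathcal V$.} Since $E^u_X$ is one-dimensional, $\dim E^s_X=\dim M-2\ge 2$. Let $\mathcal V$ be the set of $X^t\in\mathcal A(M)$ that are locally $su$-non-degenerate in the sense of \Cref{def:local-su-nondegenerate}. We will take the definition to be witnessed by a finite configuration, for instance a periodic orbit $p$ with a homoclinic point $q\in \Wu_X(p)\cap\Ws_X(p)$ whose $su$-holonomy around the associated quadrilateral fails to close up in the flow direction, with a nonzero transversal derivative. Such a condition persists under $C^1$-small perturbations, because stable and unstable manifolds of the continuation of $p$ vary continuously and the non-degeneracy is an open condition. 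This gives $C^1$-openness. For $C^\infty$-density, start from any $X^t\in\mathcal A(M)$ and pick a periodic orbit and a transverse homoclinic point. A small volume-preserving perturbation supported near a point of the homoclinic orbit, realized by a Hamiltonian-type divergence-free local vector field, changes the return time along the quadrilateral, that is, the temporal distance, to first order. This destroys the joint integrability locally. The perturbation can be taken $C^\infty$-small, so $\mathcal V$ is $C^\infty$-dense. Transitivity is automatic because the flows preserve volume.

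\textbf{Applying the addendum.} For $X^t\in\mathcal V$ and $Y^t\in\mathcal A(M)$ with equivalent strong unstable foliations, \Cref{add:nondegenerate} gives $\lambda>0$ and a homeomorphism $\psi$ with $\psi\circ Y^t=X^{\lambda t}\circ\psi$. Alternatively, if we want to use \Cref{thm:general-anosov}, note that $E^u_X$ is one-dimensional and, for volume-preserving flows with $C^\infty$ data, we can arrange it to be $C^1$ on an open dense set. The second alternative then requires a nonzero invariant $E\subset E^s_X$ jointly integrable with $E^u_X$, which is incompatible with local $su$-non-degeneracy on a neighborhood of the chosen configuration.

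\textbf{Smoothness of $\psi$.} This step is the main obstacle. We will show that $\psi$ preserves periodic data. Since $\psi$ maps $\Wu_Y$ to $\Wu_X$ and conjugates the flows, it maps weak stable leaves to weak stable leaves. We then compare volumes: $\psi$ sends the measure of maximal entropy to the measure of maximal entropy, and the conditional measures on the one-dimensional unstable leaves are Margulis measures. Using the $su$-non-degeneracy, we would first try to show that $\psi$ restricted to unstable leaves is $C^{1}$. The plan is to transport the smooth affine structure on one-dimensional unstable leaves, in the spirit of the Burniol Clotet argument, which identifies unstable Jacobians at periodic points up to the factor $\lambda$. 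We then use Liv\v{s}ic theory to show that the volumes correspond. Finally, a Journ\'e-type regularity lemma applied to smoothness along the transverse foliations, namely $\Wu$ and the weak stable foliation, upgrades $\psi$ to a $C^1$ diffeomorphism. Recovering smoothness along the higher-dimensional stable direction, where conformality is not available, is the delicate point. Here we would exploit volume preservation: once the unstable and flow Jacobians match, the stable Jacobian periodic data are forced to match as well, and we would try to combine this with the joint non-integrability to propagate regularity.
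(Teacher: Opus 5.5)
There are two genuine gaps, one at each end of the argument.

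\textbf{The choice of $\mathcal V$.} Your $\mathcal V$ does not have the property you need. Local $su$-non-degeneracy (\Cref{def:local-su-nondegenerate}) is a condition at \emph{every} $x$ and \emph{every} $w\in\Ws_{\mathrm{loc}}(x)\setminus\{x\}$. A single non-closing homoclinic quadrilateral does not witness it: such a configuration only shows that $\Ws_X$ and $\Wu_X$ are not jointly integrable.

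Here $\dim E^s_X=\dim M-2\geq 2$, so the real danger is \emph{partial} integrability: a nonzero invariant $E\subsetneq E^s_X$ with $E\oplus E^u_X$ integrable. In that case $\Delta(w,\cdot)\equiv 0$ near $x$ for $w\in\mathcal F(x)$ close to $x$, while quadrilaterals with $w\notin\mathcal F(x)$ may still fail to close with nonzero derivative. This is precisely alternative (ii) of \Cref{thm:general-anosov}. Neither your open condition nor a perturbation localized near one homoclinic orbit excludes it. So you have established neither the hypothesis of \Cref{add:nondegenerate} nor the exclusion of (ii). (Also, the strong manifolds $\Wu(p)$ and $\Ws(p)$ of a periodic orbit generically do not intersect; you must use weak ones.)

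The paper instead takes $\mathcal V$ from \cite[Proposition~4.1]{GogolevRodriguezHertz} applied to the reversed flow. This is a $C^1$-open, $C^\infty$-dense set on which no such nonzero $E$ exists. The paper also uses that $E^u_X$ is $C^1$ for \emph{every} $X^t\in\mathcal A(M)$, by \cite[Section~2.1]{GogolevRodriguezHertz} applied to $X^{-t}$; this is not something to ``arrange on an open dense set''. With both facts, \Cref{thm:general-anosov} applies with (ii) ruled out. With this $\mathcal V$, \Cref{lem:general-subbundle} also shows that $X^t$ is locally $su$-non-degenerate, so your route through \Cref{add:nondegenerate} becomes valid too.

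\textbf{The $C^1$ step.} This step is not proved, and the sketched mechanism cannot work as stated. A conjugacy up to the constant $\lambda$ matches periods, but not unstable Jacobians at periodic orbits, and you give no argument that produces such a matching. Even granting it, Liv\v{s}ic theory applied to Jacobians controls only determinants. Along a non-conformal stable direction of dimension $\geq 2$, matching determinants says nothing about the full derivative. So you obtain no regularity along weak stable leaves to feed into Journ\'e's lemma, and your final sentences are a hope rather than an argument.

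The paper does no such analysis. Since $\psi\circ Y^{t/\lambda}=X^t\circ\psi$ with $Y^{t/\lambda}\in\mathcal A(M)$ and $X^t\in\mathcal V$, it simply invokes \cite[Theorem~1.2]{GogolevRodriguezHertz}. That result upgrades such a topological conjugacy to a $C^1$ one for flows in this generic class, with no periodic-data input required.
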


This corollary is the unstable-foliation version of generic smooth rigidity for
codimension-one volume-preserving Anosov
flows~\cite{GogolevRodriguezHertz}.  We note that
both~\Cref{cor:generic-conservative} and~\Cref{thm:codimension-one} apply to suspensions of
codimension-one Anosov diffeomorphisms and to the new examples of
Fenley--Mann--Potrie~\cite{FMP}.

\begin{mainresult}
\label{thm:compact-extension}
Let $\bar X^t\colon B\to B$ be an Anosov flow and let
$\pi\colon M\to B$ be a principal fiber bundle whose fibers are given by a right action of a compact Lie
group $K$.  Suppose that $X^t\colon M\to M$ is an isometric extension of
$\bar X^t$.
Assume that the strong unstable foliation $\Wu_X$ is minimal and that $X^t$ is locally
$su$-non-degenerate.  Let
$Y^t\colon N\to N$ be a dynamically coherent
partially hyperbolic flow.
If the strong unstable foliations of $X^t$ and $Y^t$ are equivalent,
then there exist $\lambda>0$, a continuous homomorphism $k\colon\R\to K$, and a homeomorphism
$\psi\colon N\to M$ such that
\[
  \psi\circ Y^t=R^t\circ X^{\lambda t}\circ\psi,
  \qquad t\in\R,
\]
where $R^t(y)=y\cdot k(t)$, $y\in M$.
\end{mainresult}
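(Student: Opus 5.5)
We follow the Burniol Clotet mechanism, adapted to the compact extension. The central object is the \emph{cocycle of the unstable equivalence}. Since $\varphi$ maps $\Wu_Y$-leaves to $\Wu_X$-leaves, the conjugated maps $\varphi\circ Y^t\circ\varphi^{-1}$ and $X^t$ both permute the $\Wu_X$-leaves. The first goal is therefore to show that $\varphi\circ Y^t\circ\varphi^{-1}$ maps each $\Wu_X$-leaf into the center-unstable leaf $\Wcu_X$ of $X^t$, i.e.\ into the $K\times\R$-saturation of $\Wu_X$.

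\textbf{Step 1.} Show that $\varphi$ carries the weak unstable (center-unstable) leaves $\Wcu_Y$ to $\Wcu_X$. Dynamical coherence of $Y^t$ makes $\Wcu_Y$ a genuine foliation subfoliated by $\Wu_Y$. We characterize it topologically through asymptotic behaviour of $\Wu$-leaves: two $\Wu_Y$-leaves lie in the same $\Wcu_Y$-leaf iff they stay at bounded distance in the universal cover / under backward iteration. Alternatively we use holonomy along the center, which is an isometry-like (non-expanding) map between unstable leaves. Because $X^t$ is an isometric extension, $\Wcu_X(x)=\bigcup_{t,k}X^t(\Wu_X(x))\cdot k$, and the center acts isometrically. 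The cleanest route is to use local $su$-non-degeneracy, as in \Cref{add:nondegenerate}, to rule out that $\varphi$ sends a $\Wcu_Y$-leaf across stable directions. If it did, the images of center holonomies would give $su$-paths closing up in a local configuration, which contradicts non-degeneracy. We expect this step to be the main obstacle. The difficulty is that $Y^t$ is only partially hyperbolic with unknown center, and we must prove the center of $Y$ has dimension compatible with $\dim K+1$ purely from the equivalence. We would first try to get this from the fact that $\varphi$-images of $Y^t$-orbits of $\Wu_Y$-leaves form a continuous family of $\Wu_X$-leaves inside a $\Wcu_X$ leaf. Non-degeneracy then forces this family into the center directions $\R\times K$.

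\textbf{Step 2.} Once Step 1 holds, for each $y$ and $t$ the leaf $\varphi(Y^t\Wu_Y(y))$ is a $\Wu_X$-leaf in $\Wcu_X(\varphi y)$. Hence it equals $X^{\tau}(\Wu_X(\varphi y))\cdot k$ for a unique $(\tau,k)=c(t,y)\in\R\times K$. Uniqueness uses that $\Wu_X$-leaves in a center-unstable leaf are parametrized by $\R\times K$, and local $su$-non-degeneracy rules out coincidences. Then $c$ is a continuous cocycle over $Y^t$ with values in $\R\times K$, and it is constant along $\Wu_Y$-leaves.

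\textbf{Step 3.} A continuous cocycle that is constant along unstable leaves is constant on closures of unstable leaves. Minimality of $\Wu_X$, transported by $\varphi$, makes $\Wu_Y$ minimal, so $c(t,y)=c(t)$ does not depend on $y$. The cocycle identity then makes $t\mapsto c(t)$ a continuous homomorphism $\R\to\R\times K$ (the center actions commute because $X^t$ commutes with the right $K$-action). Thus $c(t)=(\lambda t,k(t))$. Here $\lambda>0$, because $Y^t$ contracts nothing in the unstable direction forward while expansion must match, and $\lambda\ne 0$ since otherwise $Y^t$ would fix leaves.

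\textbf{Step 4.} Define $\psi$ by correcting $\varphi$ inside unstable leaves. The map $R^{-t}X^{-\lambda t}\varphi Y^t$ preserves each $\Wu_X$-leaf. Because $X$ contracts unstable leaves backward, the limit
\[
\psi=\lim_{t\to\infty}R^{-t}\circ X^{-\lambda t}\circ\varphi\circ Y^{t}
\]
converges uniformly. The alternative is to pin the leafwise position via a point of the $\Ws$/center intersection. The limit is a homeomorphism satisfying $\psi\circ Y^t=R^t\circ X^{\lambda t}\circ\psi$.
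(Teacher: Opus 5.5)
Your later steps match the paper: making the center coordinate $(\tau,k)$ constant along $\Wu$-leaves by minimality, using additivity to get $(\lambda t,k(t))$, and correcting by $R^{-T}\circ X^{-\lambda T}\circ\varphi\circ Y^{T}$. The gap is Step~1, which carries the whole theorem and is the only place local $su$-non-degeneracy is used.

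\emph{Wrong target.} $\varphi(\Wcu_Y)=\Wcu_X$ is a statement about $E^c_Y$, and the hypotheses say nothing about $E^c_Y$. For instance, if $E^s_X$ had a dominated splitting whose weak part could be coherently absorbed into the center, then $Y^t=X^t$ and $\varphi=\mathrm{id}$ with that coarser splitting would contradict your claim. The paper never looks at $\Wcu_Y$. What is needed is the goal you state in your opening paragraph before switching targets: for small $|t|$, $\widehat Y^t=\varphi\circ Y^t\circ\varphi^{-1}$ moves points inside $\Wcu_X$, i.e.\ $p(y,t):=\Ws_{\mathrm{loc}}(y)\cap\Wcu_{\mathrm{loc}}(\widehat Y^t(y))$ equals $y$.

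\emph{Missing mechanism.} Your justification (``$su$-paths closing up in a local configuration'') does not explain how a violation yields a configuration forbidden by non-degeneracy. At scale $t$ the stable displacement $p(y_0,t)$ is tiny and $\widehat Y^t$ is only approximately a center-stable holonomy, so no contradiction is visible without rescaling.

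The missing idea is renormalization.
\begin{itemize}
\item By minimality, $\{y: p(y,t)=y\}$ is closed and $\Wu$-saturated. So either every small time is centered, or there are $t_n\to0$ with $p(y_0,t_n)\neq y_0$ for a fixed $y_0$.
\item Take $T_n\to\infty$ to be the first time with $\ds\bigl(X^{-T_n}y_0,X^{-T_n}p(y_0,t_n)\bigr)=L$. Consider $F_n=X^{-T_n}\circ\widehat Y^{t_n}\circ X^{T_n}$, which still maps unstable leaves onto unstable leaves.
\item On $\Wu(X^{-T_n}y_0)$, $F_n$ is $\du$-uniformly close to the renormalized center-stable holonomy. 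The bounded unstable discrepancy is contracted by $X^{-T_n}$. The center offset $R_\kappa\circ X^\sigma$ stays uniformly small, because it commutes with $X^{-T_n}$ and $R_\kappa$ is an isometry near the identity. This is where the isometric-extension hypothesis enters.
\item Passing to limits gives $y'\in\Ws_{\mathrm{loc}}(y)\setminus\{y\}$ with $\Wu_\rho(y')\subset\operatorname{Sat}^s_{\mathrm{loc}}(\Wu_{\mathrm{loc}}(y))$. This contradicts local $su$-non-degeneracy.
\end{itemize}

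Three smaller points.
\begin{itemize}
\item In Step~2, $(\tau,k)$ is not globally unique: a periodic orbit with holonomy $h$ already gives $X^{T_0}(\Wu(x))\cdot h^{-1}=\Wu(x)$. Non-degeneracy is irrelevant to uniqueness. Define $(\tau,k)$ only for small $t$ near $(0,\mathrm{id}_K)$ via local product structure, and extend by the homomorphism property.
\item ``$\lambda=0$ means $Y^t$ fixes leaves'' ignores the rotation $k(t)$. Instead, show that for $\lambda\le0$ the uniform limit of $Y^{-T}\circ\varphi^{-1}\circ Z^T$ collapses $\Wu_X$-leaves. By minimality it is then constant, contradicting surjectivity.
\item That same inverse family must also converge in Step~4; this is what makes $\psi$ a homeomorphism.
\end{itemize}
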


In general, the drift along the compact group in
\Cref{thm:compact-extension} cannot be removed.  Indeed, for every
continuous homomorphism $k\colon\R\to K$, the flows $X^t$ and
$R^t\circ X^t$ have the same
strong unstable foliation.

Combining our results with marked-length-spectrum rigidity
\cite{Hamenstadt,GuillarmouLefeuvre} also yields the following geometric
corollaries.
\begin{maincorollary}
\label{cor:geodesic-rigidity}
Let $(M,g_0)$ and $(N,g)$ be negatively curved
$n$-manifolds, $n\geq3$, and assume that $(M,g_0)$ is locally symmetric.
Let $X^t\colon T^1M\to T^1M$ and $Y^t\colon T^1N\to T^1N$ be their
geodesic flows.
If their strong unstable foliations are equivalent, then there exist
 an isometry
\[
  F\colon (N,\lambda^2g)\longrightarrow(M,g_0),
\]
where $\lambda>0$ is a constant.
\end{maincorollary}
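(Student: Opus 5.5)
We plan to reduce the statement to the main theorems above and then invoke marked length spectrum rigidity. First we check the hypotheses. Both $X^t$ and $Y^t$ are geodesic flows of closed negatively curved manifolds, so they are transitive (indeed mixing) Anosov flows. Since $(M,g_0)$ is locally symmetric of rank one, $E^u_X$ and $E^s_X$ are smooth, and in particular $C^1$. So \Cref{thm:general-anosov} applies.

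We then rule out alternative (ii). Suppose there were a nonzero continuous $DX^t$-invariant subbundle $E\subset E^s_X$ such that $E\oplus E^u_X$ integrates. Then $E\oplus E^u_X$ would be involutive. For locally symmetric spaces we can compute brackets explicitly in the Lie algebra $\mathfrak g=\mathfrak g_{-2\alpha}\oplus\mathfrak g_{-\alpha}\oplus\mathfrak m\oplus\mathfrak a\oplus\mathfrak g_\alpha\oplus\mathfrak g_{2\alpha}$. The bracket $[\mathfrak g_{-\alpha},\mathfrak g_{\alpha}]$, and likewise $[\mathfrak g_{-2\alpha},\mathfrak g_{2\alpha}]$, has a nonzero $\mathfrak a$-component, which is the flow direction. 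Concretely, for $v\in\mathfrak g_{\pm\alpha}$ nonzero, $B(\theta v,v)\neq0$ gives $[\theta v,v]\in\mathfrak a\oplus\mathfrak m$ with nonzero $\mathfrak a$-part. So for any nonzero $e\in E$ there is $u\in E^u$ whose bracket with $e$ has a nonzero flow component, and the sum cannot be involutive.

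One subtlety is that $E$ is only assumed continuous. We handle this by using the fact that $\mathcal F\subset W^s$ and $W^u$ both subfoliate a common foliation. Following local $su$-paths inside one leaf of that foliation, we obtain holonomy closures that must stay within the leaf. However, the contact structure (for real hyperbolic space), or the nonintegrability above, produces a displacement along the flow for small $su$-quadrilaterals. This is really local $su$-non-degeneracy, and the cleaner route is to cite the Addendum \Cref{add:nondegenerate} after verifying that locally symmetric geodesic flows are locally $su$-non-degenerate. We expect this verification to be the main obstacle, though it is a computation with the root space decomposition. Either way, we conclude that $\psi\circ Y^t=X^{\lambda t}\circ\psi$ for some homeomorphism $\psi$ and some $\lambda>0$.

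Next we translate the conjugacy into length data. The conjugacy maps periodic orbits of $Y^t$ of period $T$ to periodic orbits of $X^t$ of period $\lambda T$. The map $\psi\colon T^1N\to T^1M$ induces an isomorphism of fundamental groups of the unit tangent bundles. For $n\ge3$ the fibers are spheres $S^{n-1}$ with $n-1\ge2$, so $\pi_1(T^1N)\cong\pi_1(N)$, and we get $\pi_1(N)\cong\pi_1(M)$. Free homotopy classes correspond to periodic orbits, so the marked length spectra satisfy $L_g=\lambda L_{g_0}$ under this isomorphism. Equivalently, $\lambda^2g$ and $g_0$ have the same marked length spectrum, with the normalization chosen so that lengths scale by $\lambda$. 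We then apply Hamenstädt's theorem: if a negatively curved metric has the same marked length spectrum as a locally symmetric one, then it is isometric to it, via volume entropy rigidity (Besson–Courtois–Gallot). This yields an isometry $F\colon(N,\lambda^2g)\to(M,g_0)$ inducing the given marking. Some care with orientation and the direction of the scaling is needed, but that is bookkeeping.
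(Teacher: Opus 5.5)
Your proposal is correct and follows the paper's proof. Local $su$-non-degeneracy of $X^t$ plus \Cref{add:nondegenerate} gives $\psi\circ Y^t=X^{\lambda t}\circ\psi$, $n\geq3$ gives $\pi_1(T^1N)\cong\pi_1(N)$, and Hamenst\"adt together with Besson--Courtois--Gallot finishes. The only difference is that the paper gets non-degeneracy directly from \Cref{prop:contact-su-nondegenerate}, because every geodesic flow of a negatively curved metric is a contact Anosov flow, not only in the real hyperbolic case. So neither your detour through \Cref{thm:general-anosov} nor a root-space computation is needed.

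One bookkeeping correction: the conjugacy gives $\ell_{g_0}(\rho[\gamma])=\lambda\,\ell_g([\gamma])$, where $\rho\colon\pi_1(N)\to\pi_1(M)$ is the induced isomorphism, not $L_g=\lambda L_{g_0}$. It is this relation that makes $\lambda^2g$, rather than $\lambda^{-2}g$, have the same marked length spectrum as $g_0$.
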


\begin{maincorollary}
\label{cor:geodesic-local-rigidity}
Let $g_0$ be a negatively curved metric on $M$, $\dim M=n\geq2$, and fix an
integer $k>3n/2+8$.  There exists $\varepsilon>0$ with the following
property.  Let $g$ be a negatively curved metric such that
\[
  \|g-g_0\|_{C^k}<\varepsilon.
\]
Suppose that the
strong unstable foliations of the geodesic flows of $g_0$ and $g$ are
equivalent via a homeomorphism homotopic to the identity.
Then there exist an isometry
\[
  F\colon(M,\lambda^2g)\longrightarrow(M,g_0),
\]
where $\lambda>0$ is a constant.
\end{maincorollary}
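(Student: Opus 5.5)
The plan is to reduce to local marked length spectrum rigidity of Guillarmou--Lefeuvre \cite{GuillarmouLefeuvre}. First I produce an orbit conjugacy with constant time change from the unstable equivalence. Then I read off that the marked length spectra of $g_0$ and $\lambda^2 g$ agree. Finally I check that $\lambda^2g$ is still $C^k$-close to $g_0$, so that \cite{GuillarmouLefeuvre} applies.

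\textbf{Step 1: a conjugacy with constant time change.} Let $X^t,Y^t$ be the geodesic flows on $T^1M$. They are transitive contact Anosov flows.
\begin{itemize}
\item For $n=2$ the strong stable foliations are one-dimensional, so \Cref{thm:codimension-one} applies directly.
\item For $n\ge 3$ I use \Cref{add:nondegenerate}, which requires local $su$-non-degeneracy of $X^t$.
\end{itemize}
I expect local $su$-non-degeneracy to follow from the contact structure. Let $\alpha$ be the canonical contact form, so $\ker\alpha=E^s\oplus E^u$ and $d\alpha$ restricted to $E^s\oplus E^u$ is a non-degenerate pairing of $E^s$ with $E^u$. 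Suppose a local configuration had stable and unstable directions integrating jointly, say some nonzero $E\subset E^s$ with $E\oplus E^u$ tangent to a local foliation. Then $\alpha$ vanishes on the leaves, hence so does $d\alpha$. This forces $d\alpha(E,E^u)=0$, which contradicts non-degeneracy.

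I would need to match this argument against the precise \Cref{def:local-su-nondegenerate}. This is the step I expect to require the most care. If the definition is phrased via a local holonomy or template condition rather than integrability, I would translate it into the vanishing of $d\alpha$ on the relevant $su$-configuration. In either case we obtain $\lambda>0$ and a homeomorphism $\psi$ with $\psi\circ Y^t=X^{\lambda t}\circ\psi$.

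\textbf{Step 2: marked length spectra.} The conjugacy $\psi$ is built from the unstable equivalence $\varphi$ by sliding along orbits, with displacement a bounded time along the flow. Hence $\psi$ is homotopic to $\varphi$, and therefore to the identity.

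Consequently $\psi$ maps the closed $Y$-orbit in a free homotopy class $c$ to the closed $X$-orbit in the same class $c$. A period $T$ of $Y$ corresponds to a period $\lambda T$ of $X$. This gives
\[
L_{g_0}(c)=\lambda L_g(c)=L_{\lambda^2 g}(c)
\]
for every free homotopy class $c$.

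\textbf{Step 3: closeness and conclusion.} Since $\|g-g_0\|_{C^0}<\varepsilon$, we have $L_g/L_{g_0}\in[1-C\varepsilon,1+C\varepsilon]$ uniformly in $c$. Hence $|\lambda-1|\le C'\varepsilon$, and $\lambda^2g$ lies in a $C^k$-neighbourhood of $g_0$ of size $O(\varepsilon)$.

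Shrinking $\varepsilon$ so that this neighbourhood falls inside the one given by \cite{GuillarmouLefeuvre}, for $k>3n/2+8$, equality of marked length spectra yields an isometry $F\colon(M,\lambda^2g)\to(M,g_0)$. This is the claim.
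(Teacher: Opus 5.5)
Your overall route is the paper's. You obtain $\psi\circ Y^t=X^{\lambda t}\circ\psi$ from \Cref{add:nondegenerate}, show $\psi\simeq\varphi\simeq\mathrm{id}$, and compare periods to get $L_{g_0}=L_{\lambda^2g}$. You then note $\lambda\to1$ and apply \cite[Corollary~1.1]{GuillarmouLefeuvre}. Your bound $|\lambda-1|\le C'\varepsilon$, from $C^0$ comparison of lengths, is a fine quantitative version of the paper's remark $\lambda=\ell_{g_0}([\gamma_0])/\ell_g([\gamma_0])\to1$. The split at $n=2$ is unnecessary, since the paper applies the Addendum for all $n\ge2$. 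Using \Cref{thm:codimension-one} there is harmless: contact flows are not jointly integrable, so its conjugacy also comes from \Cref{prop:general-centered}. Two steps, however, need repair.

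\textbf{Step 1 has a gap as written.} Your contact-form argument does not prove the property required by \Cref{def:local-su-nondegenerate}.
\begin{itemize}
\item Negating that definition gives only a single pair $w\in\Ws_{\mathrm{loc}}(x)\setminus\{x\}$, at finite stable distance, with $\Delta(w,\cdot)\equiv0$ near $x$.
\item It gives no invariant subbundle $E\subset E^s$, and no $C^1$ submanifold tangent to $E\oplus E^u$ on which to restrict $\alpha$ and differentiate.
\item The stable saturation of an unstable plaque is only a topological manifold.
\item For $n\ge3$, $E^u$ need not be $C^1$, so you cannot use \Cref{thm:general-anosov} to produce such an $E$.
\end{itemize}
Your Frobenius-type argument therefore excludes only the infinitesimal version of the degeneracy. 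Passing from the finite configuration to an infinitesimal one needs renormalization plus a quantitative estimate. That is exactly \Cref{prop:contact-su-nondegenerate}, and you should cite it. Its proof runs as follows:
\begin{enumerate}
\item Flow forward so that $a_n=\ds(X^nx,X^nw)\to0$.
\item Use the non-degenerate pairing of $E^s$ and $E^u$ by $d\alpha$ to choose $\bar v_n\in E^u(X^nx)$ with $|d\alpha(\bar v_n,\bar w_n)|\ge c_0a_n^2$.
\item Apply Liverani's temporal-distance estimate to get $|\Delta(X^nw,z_n')|\ge c_1a_n^2$.
\item Pull back using $\Delta(w,X^{-n}z_n')=\Delta(X^nw,z_n')$. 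This gives points $z_n\to x$ with $\Delta(w,z_n)\neq0$.
\end{enumerate}

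\textbf{Step 2 misdescribes $\psi$.} It is not obtained by sliding along orbits. It is the uniform limit $\psi=\lim_{T\to\infty}X^{-\lambda T}\circ\varphi\circ Y^T$ from \Cref{prop:general-centered}. Each correcting map satisfies $X^{-\lambda T}\varphi(Y^Tx)\in\Wu_X(\varphi(x))$. So $\psi$ differs from $\varphi$ by a bounded distance along strong unstable leaves, not along flow lines. The homotopy you want is the family of correcting maps itself, parametrized by $T\in[0,\infty]$. It is continuous at $T=\infty$ because the convergence is uniform, and this is how the paper argues.

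With these two corrections the rest of your argument goes through as written: the marking is preserved, $L_{g_0}(c)=\lambda L_g(c)=L_{\lambda^2g}(c)$, and $\|\lambda^2g-g_0\|_{C^k}=O(\varepsilon)$.
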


\begin{mainremark}\label{rem:frame-mls}
Let $(M_i,g_i)$, $i=1,2$, be negatively curved $n$-manifolds,
$n\geq3$, and let $X^t$ and $Y^t$ be their oriented frame flows,
respectively.  Assume that $\Wu_X$ is minimal (for example, when $n$ is odd, $n\neq7$,~\cite{BrinGromov}, and when $g_1$ has constant negative
curvature \cite{Brin}).  If their strong unstable foliations are equivalent,
then the geodesic flows on $(M_2,g_2)$ and $(M_1,g_1)$ are conjugate up to time rescaling.
%there exist $\lambda>0$ and an isomorphism
%$\rho\colon\pi_1(M_2)\to\pi_1(M_1)$ induced by a homotopy equivalence such that
%\[
 % \ell_{g_1}(\rho[\gamma])
 % =\lambda\,\ell_{g_2}([\gamma]),
  %\qquad [\gamma]\in[\pi_1(M_2)].
%\]
%Thus $(M_2,\lambda^2g_2)$ and $(M_1,g_1)$ have the same marked length spectrum.
\end{mainremark}

\subsection*{Comments on AI} Upon reading~\cite{Burniol} the author realized that the arguments could be generalized to the codimension one setting and also to yield the dichotomy of Theorem~\ref{thm:general-anosov}. At the time, in July 2026, the author only began experimenting with AI. Upon feeding two pages of notes outlining the proof to Claude Opus 4.8, it produced 31 page AI-slop ``paper'' proving Theorems 1 and~3. To the author's surprise (at the time), while terribly written the ``paper'' was mostly correct. After some expositional improvements by Claude Opus and then ChatGPT 5.6 Sol, the author had to largely rewrite the paper which included fixing some mistakes and a lot of streamlining of the arguments. Needless to say, the author bears full responsibility for correctness of this paper.

\subsection*{Acknowledgements} The author would like to thank Sergi Burniol Clotet and Federico Rodriguez Hertz for helpful feedback on the first draft.

\section{Preliminaries}
\label{sec:preliminaries}

We recall well-known definitions and results, and at the same time
establish some notation.
\begin{definition}
A flow $X^t$ on $M$ is \emph{Anosov} if  there is a continuous $DX^t$-invariant
splitting $ TM=E^s\oplus\R X\oplus E^u $
and constants $C,\chi>0$ such that, for $t\geq0$,
\[
  \|DX^t v^s\|\leq Ce^{-\chi t}\|v^s\|,
  \qquad
  \|DX^{-t}v^u\|\leq Ce^{-\chi t}\|v^u\|
\]
for $v^s\in E^s$ and $v^u\in E^u$.
\end{definition}

The strong stable and strong unstable foliations are denoted by $\Ws$ and $\Wu$.
The
weak stable and weak unstable foliations tangent to $E^s\oplus \R X $ and $\R X\oplus E^u$ are denoted by $\Wos$ and $\Wou$,
respectively. 
For $\sigma\in\{s,u,0s,0u\}$, the notation
$W^\sigma_{\mathrm{loc}}(x)$ means a fixed local plaque
through $x$, while $W^\sigma_\rho(x)$ denotes the intrinsic radius-$\rho$
plaque.  We write $\ds$ and $\du$ for the intrinsic distances on strong
stable and strong unstable leaves.
 When two
flows are compared, a subscript such as $X$ or $Y$ is used for clarity. 
\begin{definition}
The strong stable and strong unstable foliations of an Anosov flow are
\emph{jointly integrable} if $E^s\oplus E^u$ integrates to an
$X^t$-invariant foliation subfoliated by both $\Ws$ and $\Wu$.
\end{definition}

\begin{theorem}[Anosov alternative~\cite{Anosov, Plante}]\label{thm:anosov-alternative}
Let $X^t$ be a transitive Anosov flow.  Then either
all strong stable and strong unstable manifolds are dense and $X^t$ is
topologically mixing, or
$X^t$ is a suspension over an Anosov diffeomorphism by a constant roof
function.
\end{theorem}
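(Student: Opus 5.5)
We would follow Plante's argument, organised around the closures of strong unstable leaves. For $x\in M$ set $A(x)=\overline{\Wu(x)}$. This set is closed, and it is $\Wu$-saturated: continuity of the foliation lets us lift small unstable plaques of limit points. Moreover $X^t(A(x))=A(X^t x)$. First we use transitivity together with the local product structure to show that every weak unstable leaf $\Wou(x)$ is dense. Indeed, $\overline{\Wou(x)}$ is closed and invariant, and by local product structure it contains an open piece of a weak stable leaf of a point with dense orbit, which forces density. We then fix a periodic point $p$ of period $\tau$ and put $A=A(p)$. Since $X^\tau(A)=A$, the group $G=\{t\in\R: X^t(A)=A\}$ is a closed subgroup of $\R$ containing $\tau$. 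It is therefore either all of $\R$ or $c\mathbb Z$ for some $c>0$.

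\emph{Case $G=\R$.} Then $A\supset\bigcup_t X^t\Wu(p)=\Wou(p)$, which is dense, so $A=M$. Next we propagate density to every leaf. For an arbitrary $y$, the local product structure together with density of periodic points shows that $A(y)$ contains an open set of some $\Wu$-leaf close to $\Wu_{\rm loc}(p)$. Taking the $\Ws$-holonomy and iterating forward contracts the strong stable direction, so $A(y)$ contains $\overline{\Wu(p)}$-like sets and hence equals $M$. Density of both $\Wu$ and $\Ws$ (by symmetry) then yields topological mixing in the standard way. Given open $U,V$, pick a small unstable disk $D\subset U$; then $X^t(D)$ is a large unstable disk, and uniform density of long unstable plaques (by compactness and minimality of $\Wu$) guarantees that $X^t(D)$ meets $V$ for all large $t$.

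\emph{Case $G=c\mathbb Z$.} Here the translates $A_s=X^s(A)$, $s\in[0,c)$, are pairwise distinct. We will show they are pairwise disjoint, using that each $A_s$ is $\Wu$-minimal by the same propagation argument as above, applied inside $A_s$; two distinct minimal saturated closed sets cannot intersect. By a symmetric argument the sets $A_s$ are also $\Ws$-saturated. The sets $A_s$ are $\Wu$- and $\Ws$-saturated, pairwise disjoint, permuted by the flow, and cover $M$, since $\bigcup_s A_s\supset\Wou(p)$ is dense and the union is closed by compactness. Local product structure then shows each $A_s$ is a closed topological submanifold of codimension one, transverse to the flow. The map $M\to\R/c\mathbb Z$, $z\mapsto s$ with $z\in A_s$, is continuous and increases at unit speed along orbits. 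Hence $A_0$ is a global cross-section with constant return time $c$, and the return map, which preserves the restrictions of $\Ws$ and $\Wu$, is an Anosov diffeomorphism (homeomorphism with hyperbolic structure) of $A_0$. So $X^t$ is a constant-roof suspension.

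The main obstacle we expect is the step showing that distinct translates $A_s$ are disjoint, that is, that every $\Wu$-leaf inside $A$ is dense in $A$. We would attempt this via a uniform argument: the map $y\mapsto A(y)$ is lower semicontinuous, and periodic points are dense. This should give that $A(y)\supseteq A(q)$ for a periodic $q$ near $y$. Combining this with $X^{\mathrm{per}}$-invariance, we would then derive equality.
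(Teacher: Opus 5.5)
The paper does not prove this theorem; it cites Anosov and Plante, so your proposal has to stand on its own. Your frame is sound: the stabilizer $G$ of $A=\overline{\Wu(p)}$, the two cases, and the covering of $M$ by the translates $A_s$. But the step that carries the theorem is not proved in either case. You need that \emph{every} strong unstable leaf is dense when $G=\R$, and that distinct translates $A_s$ are disjoint when $G=c\mathbb Z$. The mechanism you sketch cannot give this.

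\begin{itemize}
\item Iterating forward replaces $A(y)$ by $X^tA(y)=A(X^ty)$. So what your argument yields is that $X^{n\tau}A(y)$ becomes $\varepsilon_n$-dense with $\varepsilon_n\to0$. Since the maps $X^{-n\tau}$ are not equicontinuous, this says nothing about $A(y)$ itself.
\item Strong stable holonomy cannot transport $\Wu$-leaves to $\Wu$-leaves; its failure to do so is exactly non-joint-integrability.
\item Lower semicontinuity of $y\mapsto A(y)$ only says that for $y$ near $q$ the leaf $\Wu(y)$ passes close to any given compact piece of $\Wu(q)$. It never gives an exact inclusion $A(y)\supseteq A(q)$.
\item $\Ws$-saturation of the $A_s$ is not the time reversal of anything you have established, so ``by a symmetric argument'' does not apply.
\end{itemize}

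The missing idea is that forward iteration is legitimate only along stable leaves of the orbit of $p$, and only for objects invariant under $X^\tau$.

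\emph{Case $G=c\mathbb Z$.} First, $X^\theta A\subseteq A$ forces $X^\theta A=A$. The set of such $\theta$ is a closed additive semigroup containing $\tau\mathbb Z$. If $\theta/\tau\notin\mathbb Q$ it is all of $\R$, contradicting $G\neq\R$. If $b\theta=a\tau$, the chain $A=X^{b\theta}A\subseteq\cdots\subseteq X^\theta A\subseteq A$ closes up. Now set $S(z):=\{s\in\R/c\mathbb Z: z\in X^sA\}$, so $S(X^\sigma p)=\{\sigma\}$ for every $\sigma$. This multivalued map has closed graph and nonempty values, since the $A_s$ cover $M$. It is constant on $\Wu$-leaves and satisfies $S(X^{n\tau}y)=S(y)$. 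For $y\in\Ws(X^\sigma p)$, letting $n\to\infty$ gives $S(y)=\{\sigma\}$. Every $\Wu$-leaf meets such a stable leaf, by density of weak unstable leaves and local product structure. Hence $S$ is single valued and continuous, and the $A_s$ are disjoint. Then $e^{2\pi iS/c}$ is a continuous eigenfunction, hence constant on strong stable leaves, which gives the $\Ws$-saturation.

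\emph{Case $G=\R$.} Reverse your order and prove mixing first. By density of $\Wos(p)$, the open set $U$ contains a point on some $\Ws(X^sp)$. That point flows close to the orbit of $p$, all of whose strong unstable leaves are dense, uniformly along the orbit. Then obtain density of every leaf from mixing through an estimate uniform in the base point: for each open $U$ there is $T_0$ with $X^T(\Wu_\varepsilon(z))\cap U\neq\varnothing$ for all $z\in M$ and $T\geq T_0$. Apply it with $z=X^{-T}y$.

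\emph{Regularity.} $A_0$ is only shown to be a topological manifold, so you obtain a hyperbolic homeomorphism, not the Anosov diffeomorphism the statement requires. You still need $C^1$ regularity of $A_0$, for instance by a Journé-type argument, since $A_0$ is a union of smooth $\Ws$-plaques and of smooth $\Wu$-plaques.
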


\begin{theorem}[Plante~\cite{Plante}, Theorem 3.7]
\label{thm:plante-codimension-one}
Let $X^t$ be a transitive Anosov flow for which
$\dim E^s=1$ or $\dim E^u=1$.  If $\Ws$ and $\Wu$ are jointly
integrable, then every leaf of the integrating foliation is a compact $C^1$
global cross-section with constant return time. 
\end{theorem}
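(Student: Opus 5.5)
Assume without loss of generality that $\dim E^u=1$; the case $\dim E^s=1$ follows by reversing time, which swaps $\Ws$ and $\Wu$. Let $\mathcal F$ be the $C^1$ foliation of $M$ integrating $E^s\oplus E^u$, subfoliated by $\Ws$ and $\Wu$. The plan is to show that $\mathcal F$ is transverse to the flow, that its leaves are closed, and that the first return time to a leaf is constant.

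\emph{Step 1 (transversality and holonomy).} Since $TM=E^s\oplus E^u\oplus\R X$, the leaves of $\mathcal F$ are codimension one and transverse to $X$. Joint integrability gives a local product structure: $\mathcal F_{\mathrm{loc}}(x)$ is the union of $\Wu_{\mathrm{loc}}(z)$ over $z\in\Ws_{\mathrm{loc}}(x)$. The flow permutes the leaves, because $X^t$ preserves $\Ws$ and $\Wu$. The holonomy of $\mathcal F$ along a flow segment is a time shift. I would show that this time shift is locally constant along leaves. Indeed, the time coordinate separating two nearby leaves $\mathcal F(x)$ and $\mathcal F(X^\tau x)$ is unchanged when we move along $\Ws$ or $\Wu$ inside $\mathcal F(x)$, since both foliations are contained in the leaves and are $X^t$-invariant. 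Define $\tau(y)$ as the local flow time from $\mathcal F(x)$ to the leaf through $y$. Then $\tau$ is a continuous function on a flow box that is constant on $\mathcal F$-plaques. Equivalently, the $1$-form $\eta$ with $\ker\eta=E^s\oplus E^u$ and $\eta(X)=1$ is continuous, and it is closed in the sense of distributions because it is locally the differential of $\tau$.

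\emph{Step 2 (closed form gives a fibration).} The form $\eta$ is $X^t$-invariant and locally exact. By transitivity its cohomology class $[\eta]$ is nonzero, since $\int_\gamma\eta>0$ for every periodic orbit $\gamma$. The standard Tischler-type argument then applies. If the period group $\{\int_\gamma\eta:\gamma\in\pi_1(M)\}$ is cyclic, say generated by $T$, then $t(y)=\int_{x_0}^y\eta \bmod T$ defines a fibration $M\to\R/T\mathbb Z$. Its fibers are compact leaves of $\mathcal F$, because they are closed subsets consisting of unions of leaves of the codimension-one foliation $\mathcal F$. The flow moves $t$ at unit speed, so every fiber is a global cross-section with constant return time $T$. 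These sections are $C^1$ because $\mathcal F$ is a $C^1$ foliation.

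\emph{Step 3 (main obstacle: excluding dense periods).} The hardest part is ruling out a period group of rank at least two. In that case every leaf of $\mathcal F$ is dense, and the flow is a time change of a linear-like flow with no cross-section of constant return time. Here the one-dimensionality of $E^u$ is used. The flow $X^t$ restricted to a leaf $L=\mathcal F(x)$ returns to $L$ exactly when $t$ lies in the period group $P\subset\R$. For $p\in P$ the map $X^{p}\colon L\to L$ is a diffeomorphism of $L$ that preserves the product structure $\Ws\times\Wu$ and expands the one-dimensional $\Wu$ leaves.

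If $P$ is dense, pick $p_n\to0$ in $P$. Then $X^{p_n}$ is $C^0$-close to the identity on compact parts of $L$. Combined with Plante's argument on codimension-one foliations with one-dimensional leaves, this should contradict the uniform expansion of $DX^{p}$ on $E^u$ for fixed $p>0$. Concretely, for a periodic orbit $\gamma$ of period $T_\gamma$, we have $T_\gamma\in P$. Using the holonomy-invariant structure, one shows that $\tau$ descends to a Lipschitz map to $\R/P$. Density of $P$ then forces $\tau$ to be constant on leaves that accumulate in an orbit direction, which contradicts transversality.

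I expect this density exclusion to be the step needing care. The fallback is to cite the classical fact that a codimension-one Anosov flow whose $E^s\oplus E^u$ is integrable is a suspension: by Verjovsky and Plante, leaves of the integrating foliation are planes on which $\pi_1$ acts, and the return-time cocycle is cohomologous to a constant.
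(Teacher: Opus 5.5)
The paper gives no proof of this statement; it is quoted from Plante. So your proposal has to stand on its own.

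Steps~1 and~2 are sound. Joint integrability gives the continuous closed invariant form $\eta$ with $\ker\eta=E^s\oplus E^u$ and $\eta(X)=1$. Its local primitives are $C^1$, since in a flow box over a plaque they are just the time coordinate. When the period group $P$ is cyclic, the map $M\to\R/P$ is a $C^1$ submersion whose fibres are compact leaves and cross-sections with constant return time. (The leaves are $C^1$ because they are level sets of this submersion, not because $\mathcal F$ is a $C^1$ foliation, which is not given.)

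The genuine gap is Step~3. This is where the hypothesis $\dim E^u=1$ must do all the work, and in your sketch it never enters. The specific claims fail:
\begin{itemize}
\item For small $p\in P$, the map $X^p$ preserves $L$ but moves every point off its $\mathcal F$-plaque. So it is uniformly far from the identity in the leaf metric. Its ambient closeness to the identity does not contradict expansion along $E^u$.
\item For dense $P$ the quotient $\R/P$ is indiscrete, so a ``Lipschitz map to $\R/P$'' carries no information.
\item Dense leaves of a codimension-one foliation transverse to a flow are fully compatible with transversality; think of irrational linear foliations of $\mathbb T^2$ transverse to a linear flow.
\item The fallback simply cites the statement being proved.
\end{itemize}
More to the point, nothing local excludes the dense case. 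Suppose $f\colon\Sigma\to\Sigma$ were an Anosov diffeomorphism with a closed $1$-form $\omega$, $[\omega]\neq0$, and $f^*\omega-\omega=dh$. Then the suspension of $f$ with roof $c+h$ ($c$ large) carries the invariant closed form $ds+\omega$, where $s$ is the suspension coordinate. Hence its $E^s\oplus E^u=\ker(ds+\omega)$ is integrable, and for suitable $c$ its period group is dense. Ruling this out is a homological statement about $f^*$ on $H^1$, which leafwise hyperbolicity cannot supply.

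The missing idea is to produce a cross-section first and then use the classification of codimension-one Anosov diffeomorphisms.
\begin{enumerate}
\item Since $\eta(X)=1$, the class $[\eta]$ is positive on every invariant measure.
\item Replace $\eta$ by a smooth closed form in its class that is still positive on $X$. For instance, write $\eta=\eta_0+dg$ with $\eta_0$ smooth and average $\eta_0$ along the flow over a long time.
\item Perturb to a rational class and apply Tischler (or Fried--Schwartzman). This gives a connected global cross-section $\Sigma$ whose return map $f$ is a transitive Anosov diffeomorphism with one-dimensional unstable bundle.
\item By Newhouse and Franks, $\Sigma$ is a torus and $f$ is conjugate to a hyperbolic automorphism. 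So $f^*$ has no eigenvalue $1$ on $H^1(\Sigma;\R)$.
\item The inclusion $i\colon\Sigma\to M$ satisfies $i\circ f\simeq i$, by homotoping along flow lines. So $i^*[\eta]$ is $f^*$-invariant, hence zero.
\item $H_1(M;\mathbb Z)$ is generated by $i_*H_1(\Sigma;\mathbb Z)$ together with one loop crossing $\Sigma$ once. Therefore $P$ is cyclic, and your Step~2 finishes the argument.
\end{enumerate}
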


\begin{definition}
A flow $X^t$ on $M$ is \emph{partially hyperbolic}
if there is a continuous $DX^t$-invariant splitting
\[
  TM=E^s\oplus E^c\oplus E^u,
  \qquad \R X\subset E^c,
\]
such that $E^s$ and $E^u$ are uniformly contracted and expanded,
respectively, and the center has intermediate behavior. That is, there exist $C,\chi>0$ such that, for $t\geq0$,
\[
\begin{alignedat}{2}
 \|DX^t|_{E^s}\|&\leq Ce^{-\chi t},\qquad &
 \|DX^{-t}|_{E^u}\|&\leq Ce^{-\chi t},\\
 \frac{\|DX^t|_{E^s}\|}{m(DX^t|_{E^c})}
 &\leq Ce^{-\chi t},\qquad &
 \frac{\|DX^t|_{E^c}\|}{m(DX^t|_{E^u})}
 &\leq Ce^{-\chi t}.
\end{alignedat}
\]
where $m(A)=\|A^{-1}\|^{-1}$.  The flow is \emph{dynamically coherent} if
$E^s\oplus E^c$ and $E^c\oplus E^u$ integrate to invariant
foliations $\Wcs$ and $\Wcu$. In this case we let $W^c=\Wcs\cap\Wcu$.
\end{definition}

For a partially hyperbolic flow, we also write $\Ws$ and $\Wu$ for the
strong stable and strong unstable foliations tangent to $E^s$ and $E^u$,
respectively. For $\sigma\in\{s,u,cs,cu,c\}$, we use the same local plaque
notation as in Anosov case.

\begin{definition}\label{def:compact-group-extension}
Let $\bar X^t\colon B\to B$ be an Anosov flow and let
$\pi\colon M\to B$ be a principal fiber bundle whose fiber is a compact Lie
group $K$.  We fix a $K$-invariant metric on $M$.  An \emph{isometric extension} of $\bar X^t$ is a flow
$X^t$ on $M$ such that
\[
  \pi\circ X^t=\bar X^t\circ\pi,
  \qquad
  X^t\circ R_k=R_k\circ X^t,
  \qquad \forall k\in K,\ t\in\R,
\]
where $R_k(y)=y\cdot k$ is the principal right action.  Then $X^t$ is
partially hyperbolic with
$
  E^c=\R X\oplus\ker D\pi$.
\end{definition}

We use the following notation for the local product bracket.  For an Anosov
flow, whenever $x$ and $y$ are sufficiently close, we denote
\[
  [x,y]
  :=
  \Ws_{\mathrm{loc}}(x)\cap\Wou_{\mathrm{loc}}(y).
\]
For a dynamically coherent partially hyperbolic flow, we use the same
notation with $\Wcu_{\mathrm{loc}}(y)$ in place of
$\Wou_{\mathrm{loc}}(y)$.

\begin{definition}\label{def:temporal-distance}
For an Anosov flow $X^t$, let $w\in\Ws_{\mathrm{loc}}(x)$ and
$z\in\Wu_{\mathrm{loc}}(x)$.  The \emph{temporal distance}
$\Delta(w,z)$ is the unique small time such that
\[
  X^{\Delta(w,z)}([z,w])\in\Wu_{\mathrm{loc}}(w).
\]
\end{definition}

\begin{definition}\label{def:local-su-nondegenerate}
Let $X^t$ be a dynamically coherent partially hyperbolic flow.  We call
$X^t$ \emph{locally $su$-non-degenerate} if, whenever
$w\in\Ws_{\mathrm{loc}}(x)\setminus\{x\}$, the local strong-stable
holonomy from $\Wcu_{\mathrm{loc}}(x)$ to
$\Wcu_{\mathrm{loc}}(w)$ does not carry an open strong unstable plaque
through $x$ into the strong unstable plaque through $w$.
This definition also applies to Anosov flow, in this case it says that for every
such $w$ the function
\[
  z\longmapsto\Delta(w,z),
  \qquad z\in\Wu_{\mathrm{loc}}(x),
\]
is not identically zero on any neighborhood of $x$ in
$\Wu_{\mathrm{loc}}(x)$.  
\end{definition}

\begin{definition}\label{def:contact-flow}
Let $M$ have dimension $2n+1$.  A smooth flow $X^t$ on
$M$, with generator $X$, is a \emph{contact flow} if it is the Reeb flow of
a smooth contact form $\alpha$; that is,
\[
  \alpha\wedge(d\alpha)^n\neq0,\qquad
  \alpha(X)=1,\qquad
  \iota_X d\alpha=0.
\]
In particular, $\alpha$ is $X^t$-invariant.  If $X^t$ is also Anosov, we
call it a \emph{contact Anosov flow}.
\end{definition}

For a contact Anosov flow,
$E^s\oplus E^u=\ker\alpha$.  Moreover, $d\alpha$ vanishes on each of
$E^s$ and $E^u$ and pairs these two subbundles non-degenerately.

\begin{proposition}\label{prop:contact-su-nondegenerate}
Every smooth contact Anosov flow is locally $su$-non-degenerate.  More
precisely, if $w\in\Ws_{\mathrm{loc}}(x)\setminus\{x\}$, then there are
$c>0$ and $z_n\in\Wu_{\mathrm{loc}}(x)$ with $z_n\to x$ such that
\[
  |\Delta(w,z_n)|\geq c\du(x,z_n)^2.
\]
Moreover,
if $X^t$ is an isometric extension of a smooth contact Anosov flow
$\bar X^t$, then $X^t$ is locally $su$-non-degenerate as well.
\end{proposition}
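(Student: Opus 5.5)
The plan is to express the temporal distance $\Delta(w,z)$ as an integral of the contact form $\alpha$, and then expand it to second order in $z$.

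\textbf{Temporal distance as an integral.} Fix $w\in\Ws_{\mathrm{loc}}(x)\setminus\{x\}$ and $z\in\Wu_{\mathrm{loc}}(x)$, and form the closed ``$su$-quadrilateral'' piecewise-$C^1$ loop
\[
x\to z\to [z,w]\to X^{\Delta}([z,w])\to w\to x .
\]
The legs are, in order: the unstable leg from $x$ to $z$; the stable leg inside $\Ws_{\mathrm{loc}}$ of $z$ (after $[z,w]$ is adjusted along the flow); the short flow segment of length $\Delta=\Delta(w,z)$; the unstable leg in $\Wu_{\mathrm{loc}}(w)$; and finally the stable leg back to $x$. Since $\ker\alpha=E^s\oplus E^u$, the form $\alpha$ vanishes on every stable and unstable leg, and it equals $1$ along the flow segment. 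Hence $\int_\gamma\alpha=\pm\Delta(w,z)$.

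Next we span this loop by a small disc $D$, built from the stable holonomy images of the unstable segment $[x,z]$ together with flow segments. Stokes then gives
\[
\Delta(w,z)=\pm\int_D d\alpha .
\]

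\textbf{Second-order expansion.} Parametrize $z=\exp_x(sv)$ with $v\in E^u(x)$, and first consider $w$ very close to $x$. To leading order $D$ is the parallelogram spanned by $sv$ and $u\in E^s(x)$, where $u$ is the direction from $x$ to $w$. This gives
\[
\Delta(w,z)=\pm s\,d\alpha_x(u,v)+o(s\,|u|).
\]
The $o(s|u|)$ error uses that the holonomies are $C^1$ along leaves; for contact flows the bundle $E^s\oplus E^u$ is $C^1$, and the smoothness of $\alpha$ controls the curvature error.

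This linear-in-$s$ form only works infinitesimally in $u$, whereas $w$ is fixed. So instead I would hold $w$ fixed and use the structure of $d\alpha$ directly. Define
\[
\beta:=\frac{d}{ds}\Big|_{s=0}\Delta\bigl(w,z(s)\bigr),
\]
a linear functional on $E^u(x)$. By differentiating the Stokes formula in $s$, $\beta(v)$ is the integral of $d\alpha(\cdot,V)$ along the stable path from $x$ to $w$, where $V$ is the holonomy-transported field along that path. Non-degeneracy of the pairing of $E^s$ with $E^u$ by $d\alpha$ should make $\beta\neq0$ for suitable $v$. That would give $|\Delta(w,z_n)|\ge c\,\du(x,z_n)$, which is even stronger than the stated bound $c\,\du(x,z_n)^2$.

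\textbf{Main obstacle.} The hardest step is certifying this nonvanishing (or a second-order nonvanishing) for an arbitrary fixed $w$, since $\beta$ could cancel along the path. My first attempt would use the flow to rescale. The temporal distance is $X^t$-invariant:
\[
\Delta\bigl(X^tw,X^tz\bigr)=\Delta(w,z).
\]
Apply $X^T$ with $T$ large, so that $X^Tw$ is very close to $X^Tx$. There the infinitesimal formula $\pm s\,d\alpha(u,v)$ applies with a controlled error, and non-degeneracy of $d\alpha$ lets us choose $v$ with $d\alpha(u,v)\neq0$. Pulling back by $X^{-T}$ gives points $z_n\to x$ with $|\Delta(w,z_n)|$ bounded below. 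Because $X^{-T}$ contracts unstable lengths nonuniformly, comparing these rates is exactly what produces a power of $\du$. I expect the exponent $2$ to be what survives the worst-case bunching between the $E^u$ contraction and the scale of $u$, via a Taylor expansion of the $C^1$ holonomy. In any case, nonvanishing near $x$ already yields local $su$-non-degeneracy.

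\textbf{Isometric extensions.} Take $w\in\Ws_{\mathrm{loc}}(x)$ in $M$. Since $\pi$ maps strong stable, strong unstable and center-unstable plaques to the corresponding objects of $\bar X^t$, with $\Wcu$ mapping to $\Wou$, suppose the stable holonomy of $X^t$ carried an open unstable plaque through $x$ into $\Wu(w)$. Projecting, the stable holonomy of $\bar X^t$ would carry an open unstable plaque through $\pi x$ into $\Wu(\pi w)$, so $\bar\Delta(\pi w,\cdot)\equiv0$ near $\pi x$. This contradicts the first part provided $\pi w\neq\pi x$. If instead $\pi w=\pi x$, then $w=x\cdot k$ lies in both the stable leaf and the $K$-orbit of $x$, which are transverse. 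By local uniqueness of the intersection, $w=x$, which is excluded.
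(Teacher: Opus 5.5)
Your final strategy is the paper's route. You push forward by $X^n$ so that $w_n=X^n(w)$ is close to $x_n=X^n(x)$, and use the infinitesimal formula $\Delta\approx d\alpha(u,v)$; your Stokes sketch is essentially Liverani's Lemma~B.7, which the paper simply cites. You then choose $v$ via the non-degenerate $d\alpha$-pairing of $E^s$ and $E^u$ and pull back using $X^t$-invariance of $\Delta$. This does give local $su$-non-degeneracy, and your treatment of isometric extensions matches the paper, with the case $\pi w=\pi x$ handled more carefully.

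The gap is the quantitative bound $|\Delta(w,z_n)|\ge c\,d^u(x,z_n)^2$, which is part of the statement. You never fix the size of the unstable displacement, and you leave the exponent $2$ as an expectation about ``worst-case bunching''. Your claim that $|\Delta(w,z_n)|$ stays bounded below is also false, since $\Delta(w,\cdot)$ is continuous and vanishes at $x$. With a fixed unstable scale $s$, what your construction actually gives is $|\Delta(w,z_n)|\gtrsim s\,a_n\to0$, where $a_n=d^s(x_n,w_n)$. Comparing this with $d^u(x,z_n)^2\lesssim e^{-2\chi n}s^2$ would require a bunching relation between the decay rate of $a_n$ and the unstable contraction, and no such relation is assumed.

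The missing idea is to tie the unstable scale to the stable one. Write $w_n=\exp^s_{x_n}(\bar w_n)$, so $\|\bar w_n\|=a_n$. Choose $\bar v_n\in E^u(x_n)$ with $\|\bar v_n\|=a_n$ and $|d\alpha(\bar v_n,\bar w_n)|\ge c_0a_n^2$. Liverani's error term is $o(a_n^2)$ uniformly, so $z_n'=\exp^u_{x_n}(\bar v_n)$ satisfies $|\Delta(w_n,z_n')|\ge c_1a_n^2$. Meanwhile $z_n=X^{-n}(z_n')$ satisfies $d^u(x,z_n)\le Ce^{-\chi n}a_n\le Ca_n$. Flow invariance then gives $|\Delta(w,z_n)|=|\Delta(w_n,z_n')|\ge c_1C^{-2}d^u(x,z_n)^2$. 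The exponent $2$ just records that both displacements have size $a_n$; no bunching enters.

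There are two smaller points. First, the uniform $o(\|u\|\,\|v\|)$ error cannot be justified by ``$C^1$ holonomies''. Stable holonomy between unstable plaques is in general only H\"older, and handling that is exactly what Liverani's lemma does. Second, your first-order functional $\beta$ presupposes that $\Delta(w,\cdot)$ is differentiable at $x$, which is not available in general. So the suggested linear lower bound is unsupported, though it is also unnecessary.
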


\begin{proof}
Let $x_n=X^n(x)$, $w_n=X^n(w)$, and
$a_n=\ds(x_n,w_n)\to 0$, $n\to\infty$.  In stable leafwise exponential
coordinates write $w_n=\exp^s_{x_n}(\bar w_n)$.  Since $d\alpha$ pairs
$E^s$ and $E^u$ non-degenerately, we can choose
$\bar v_n\in E^u(x_n)$ such that
\[
  \|\bar v_n\|=\|\bar w_n\|=a_n,
  \qquad
  |d\alpha(\bar v_n,\bar w_n)|\geq c_0a_n^2.
\]
Let $z'_n=\exp^u_{x_n}(\bar v_n)$.  Then Liverani's temporal-distance
estimate~\cite[Lemma~B.7]{Liverani} gives a smaller constant $c_1>0$ such that
\[
  |\Delta(w_n,z'_n)|\geq c_1a_n^2
\]
Let $z_n=X^{-n}(z'_n)$.  Iterating backwards we have $\du(x,z_n)\leq Ca_n$, while flow
invariance of temporal distance gives
$\Delta(w,z_n)=\Delta(w_n,z'_n)$.  The asserted estimate follows, and in
particular the temporal-distance function cannot vanish on a neighborhood
of $x$ in $\Wu_{\mathrm{loc}}(x)$.

For an isometric extension, the projection $\pi$
maps local strong stable and strong unstable plaques of $X^t$ onto the
corresponding plaques of $\bar X^t$ and intertwines their holonomies.
If a local stable holonomy upstairs carried an open strong unstable plaque
into a strong unstable plaque, its projection would do the same downstairs,
a contradiction. 
\end{proof}

\section{Anosov flows with $C^1$ unstable foliation}
\label{sec:holonomy-subbundles}

In this section we describe a general construction for an Anosov flow $X^t$ with $C^1$ regular unstable distribution $E^u$. It is well-known that if $E^u$ is $C^1$ then the unstable foliation $W^u$ has $C^1$ holonomies. The author believes that this discussion has some independent interest. It could also be carried out for partially hyperbolic systems. 

Recall that $E^{0s}=E^s\oplus\R X$.  For $z\in\Wu(y)$, let
\[
 \operatorname{Hol}^u_{y,z}\colon
 \Wos_{loc}(y)\longrightarrow \Wos_{loc}(z)
\]
be the strong unstable holonomy between weak stable leaves.
Unstable leaves are simply connected, so this holonomy is independent of the
choice of path from $y$ to $z$.  It is a $C^1$ diffeomorphism between
sufficiently small neighborhoods of its basepoints.  The differential
$D_y\operatorname{Hol}^u_{y,z}$ identifies $E^{0s}(y)$
with $E^{0s}(z)$.

For $\rho>0$, let
\begin{equation}
 E_\rho(y)=\bigcap_{z\in\Wu_\rho(y)}
 \bigl(D_y\operatorname{Hol}^u_{y,z}\bigr)^{-1}E^s(z).
\end{equation}
Define
\begin{equation}
 E_{\min}(y)=\lim_{\rho\searrow0}E_\rho(y),\qquad
 E_{\max}(y)=\lim_{\rho\to\infty}E_\rho(y)
 =\bigcap_{z\in\Wu(y)}
 \bigl(D_y\operatorname{Hol}^u_{y,z}\bigr)^{-1}E^s(z).
\end{equation}
We have
\[
 E_{\max}(y)\subset E_\rho(y)\subset E_{\min}(y)\subset E^s(y).
\]

The following lemma is immediate from the definitions.

\begin{lemma}\label{lem:holonomy-invariance}
Both $E_{\min}$ and $E_{\max}$ are $DX^t$-invariant.  Moreover,
$E_{\max}$ is invariant under unstable holonomy:
\begin{equation}
 D_y\operatorname{Hol}^u_{y,z}\bigl(E_{\max}(y)\bigr)
 =E_{\max}(z),\qquad z\in\Wu(y).
\end{equation}
\end{lemma}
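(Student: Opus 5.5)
The argument rests on two facts: the flow commutes with unstable holonomies, and unstable holonomies compose. Everything else is bookkeeping with the definitions of $E_\rho$, $E_{\min}$ and $E_{\max}$.

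\textbf{Step 1: flow equivariance of holonomy.} For $z\in\Wu(y)$ and $t\in\R$ we have $X^t(z)\in\Wu(X^t y)$. The flow maps weak stable leaves to weak stable leaves and strong unstable leaves to strong unstable leaves, so
\[
X^t\circ\operatorname{Hol}^u_{y,z}=\operatorname{Hol}^u_{X^ty,X^tz}\circ X^t
\]
on small neighborhoods. Differentiating at $y$ gives
\[
DX^t\circ D_y\operatorname{Hol}^u_{y,z}=D_{X^ty}\operatorname{Hol}^u_{X^ty,X^tz}\circ DX^t .
\]
Combined with $DX^t E^s(z)=E^s(X^tz)$, this shows that $DX^t$ maps $(D_y\operatorname{Hol}^u_{y,z})^{-1}E^s(z)$ onto $(D_{X^ty}\operatorname{Hol}^u_{X^ty,X^tz})^{-1}E^s(X^tz)$.

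\textbf{Step 2: invariance of $E_{\max}$.} As $z$ ranges over $\Wu(y)$, the point $X^tz$ ranges over all of $\Wu(X^ty)$. Intersecting the identity from Step 1 over all such $z$ gives $DX^tE_{\max}(y)=E_{\max}(X^ty)$.

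\textbf{Step 3: invariance of $E_{\min}$.} Here $X^t$ does not map $\Wu_\rho(y)$ onto $\Wu_\rho(X^ty)$. By uniform continuity of $X^t$ on compact sets, for fixed $t$ there are radii with
\[
X^t\bigl(\Wu_{\rho'}(y)\bigr)\subset \Wu_{\rho}(X^ty),\qquad \rho'=\rho'(\rho,t)\to0 \text{ as } \rho\to0 .
\]
Moreover $\rho\mapsto E_\rho$ is monotone decreasing in $\rho$ and takes values in finitely many dimensions, so it stabilizes for small $\rho$. Step 1 then yields
\[
DX^tE_{\rho'}(y)\supset E_\rho(X^ty).
\]
Letting $\rho\to0$ gives $DX^tE_{\min}(y)\supset E_{\min}(X^ty)$. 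The symmetric argument with $-t$ gives the reverse inclusion.

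\textbf{Step 4: holonomy invariance of $E_{\max}$.} For $z,z'\in\Wu(y)$, holonomies compose:
\[
\operatorname{Hol}^u_{y,z'}=\operatorname{Hol}^u_{z,z'}\circ\operatorname{Hol}^u_{y,z},
\]
which is path independent because unstable leaves are simply connected. The chain rule then gives
\[
(D_y\operatorname{Hol}^u_{y,z'})^{-1}E^s(z')=(D_y\operatorname{Hol}^u_{y,z})^{-1}(D_z\operatorname{Hol}^u_{z,z'})^{-1}E^s(z').
\]
Since $\Wu(z)=\Wu(y)$, intersecting over $z'$ gives $D_y\operatorname{Hol}^u_{y,z}E_{\max}(y)=E_{\max}(z)$.

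The only delicate point is the radius bookkeeping in Step 3. It works because of the monotone stabilization of $E_\rho$ and the fact that, for fixed $t$, the map $X^t$ distorts leaf radii by at most a bounded factor.
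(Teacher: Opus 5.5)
Your proof is correct and follows the route the paper intends: the paper gives no argument beyond calling the lemma ``immediate from the definitions.'' Your steps are that unwinding of definitions, namely flow-equivariance of unstable holonomy for $E_{\max}$ and, with the radius bookkeeping and stabilization of $E_\rho$, for $E_{\min}$, together with the chain rule for composed holonomies for the holonomy invariance of $E_{\max}$. One small correction: in Step 3 the inclusion $X^t(\Wu_{\rho'}(y))\subset\Wu_\rho(X^ty)$ for intrinsic radii follows from the uniform bound on $\|DX^t|_{E^u}\|$ rather than from uniform continuity, as your closing remark already indicates.
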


\begin{lemma}\label{lem:holonomy-constant-rank}
The function $d(y)=\dim E_{\max}(y)$ is upper semicontinuous.
If $\Wu$ is minimal, then $d$ is constant and there exists $L>0$ such that
\begin{equation}
 E_{\max}(y)=E_L(y)\qquad\text{for every }y\in M.
\end{equation}
\end{lemma}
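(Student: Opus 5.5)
The argument combines continuity of the holonomy derivatives with a finite-radius compactness step, and then uses holonomy invariance to transport the dimension along unstable leaves.

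\emph{Step 1: semicontinuity.} Fix $y\in M$ and write $d=d(y)$. Since $E_{\max}(y)=\bigcap_{\rho}E_\rho(y)$ is a decreasing intersection of subspaces of the finite-dimensional space $E^s(y)$, it stabilizes. Hence there is a finite $\rho_0$ with $E_{\max}(y)=E_{\rho_0}(y)$. By compactness of $\overline{\Wu_{\rho_0}(y)}$ and finite-dimensionality, finitely many points $z_1,\dots,z_m\in\Wu_{\rho_0}(y)$ already suffice:
\[
 E_{\max}(y)=\bigcap_{i}\bigl(D_y\operatorname{Hol}^u_{y,z_i}\bigr)^{-1}E^s(z_i).
\]
For $y'$ near $y$, choose $z_i'\in\Wu(y')$ close to $z_i$ (unstable leaves vary continuously on compact pieces). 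Because $E^u$ is $C^1$, the derivatives $D_{y'}\operatorname{Hol}^u_{y',z_i'}$ depend continuously on $(y',z_i')$, and $E^s$ is continuous. So each $(D_{y'}\operatorname{Hol}^u_{y',z'_i})^{-1}E^s(z'_i)$ is a continuously varying subspace of constant dimension. The dimension of an intersection of finitely many continuously varying subspaces is upper semicontinuous, since it is given by the corank of a continuously varying linear map. Therefore
\[
 d(y')\le \dim\bigcap_i\bigl(D_{y'}\operatorname{Hol}^u_{y',z_i'}\bigr)^{-1}E^s(z_i')\le d \quad\text{for } y' \text{ near } y.
\]

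\emph{Step 2: constancy under minimality.} By \Cref{lem:holonomy-invariance}, $D_y\operatorname{Hol}^u_{y,z}$ is an isomorphism carrying $E_{\max}(y)$ onto $E_{\max}(z)$, so $d$ is constant along each unstable leaf. Let $d_0=\min d$, which is attained by upper semicontinuity. The set $\{d\le d_0\}=\{d=d_0\}$ is open by Step 1, nonempty, and $\Wu$-saturated. Its complement is closed, $\Wu$-saturated, and therefore a union of closed leaf-closures. By minimality every leaf is dense, so a nonempty closed saturated set is all of $M$. The complement is thus empty, and $d\equiv d_0$.

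\emph{Step 3: uniform radius $L$.} For each $y$, Step 1 gives a finite $\rho(y)$ and points $z_i$ realizing $E_{\max}(y)$ as the finite intersection above. By the same continuity argument, for $y'$ near $y$ the finite intersection over the nearby points $z_i'$ has dimension $\le d_0$. It contains $E_{\max}(y')$, which has dimension exactly $d_0$ by Step 2. Hence the two are equal. The points $z_i'$ lie in $\Wu_{\rho(y)+1}(y')$, so
\[
 E_{\max}(y')\subset E_{\rho(y)+1}(y')\subset \bigcap_i\bigl(D_{y'}\operatorname{Hol}^u_{y',z_i'}\bigr)^{-1}E^s(z_i')=E_{\max}(y'),
\]
and therefore $E_{\max}(y')=E_{\rho(y)+1}(y')$ on a neighborhood $U_y$ of $y$. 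Cover $M$ by finitely many $U_{y_j}$ and set $L=\max_j\rho(y_j)+1$. Since $E_\rho$ is monotone decreasing in $\rho$ and bounded below by $E_{\max}$, we get $E_L(y)=E_{\max}(y)$ for every $y\in M$.

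\emph{Main obstacle.} The delicate point is the continuity used in Steps 1 and 3. One needs the holonomy derivative $D_{y'}\operatorname{Hol}^u_{y',z'}$ to vary continuously jointly in $(y',z')$ over unstable paths of bounded length, and the chosen points $z'_i$ to be taken continuously in $y'$. I would handle this by writing the holonomy along a length-$\rho$ unstable path as a composition of finitely many local holonomy maps inside foliation charts. The $C^1$ regularity of $E^u$ makes these local maps $C^1$ with derivatives continuous in all parameters.
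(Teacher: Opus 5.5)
Your proposal is correct and follows essentially the same route as the paper. Finitely many holonomy constraints with continuously varying derivatives give upper semicontinuity, minimality of $\Wu$ forces the open, $\Wu$-saturated minimum locus to be all of $M$, and constant rank together with compactness yields a uniform $L$. The main difference is that you run the continuity argument on full neighborhoods in $M$, which needs joint continuity of $D_{y'}\operatorname{Hol}^u_{y',z'}$; you correctly flag this. The paper instead keeps the same maps $h_i$ on a weak stable disk through $y$ and spreads the conclusion by unstable holonomy invariance. One small correction: $\min d$ is attained because $d$ takes finitely many integer values, not because of upper semicontinuity.
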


\begin{proof}
Since $E_{\max}(y)$ is finite dimensional, we can choose finitely many
$z_i\in\Wu(y)$ whose pulled-back stable subspaces intersect in
$E_{\max}(y)$.  We choose such a collection of minimal cardinality, then the
corresponding normal vectors are linearly independent.  The corresponding holonomies are $C^1$ maps
$h_i=\operatorname{Hol}^u_{y,z_i}$ which we can restrict to a common weak stable disk
$D^{0s}$ about $y$.
For any $w\in D^{0s}$, the same holonomy map, can be regarded as
unstable holonomy from $w$ to $h_i(w)$.  Consequently,
\begin{equation}\label{eq:holonomy-finite-constraints}
 E_{\max}(w)\subset
 K(w):=\bigcap_i(D_wh_i)^{-1}E^s(h_i(w)).
\end{equation}
The subspaces in this intersection vary continuously with $w$.
After shrinking $D^{0s}$ if needed, their normal vectors remain independent, so
$\dim K(w)=\dim E_{\max}(y)=d(y)$.  Hence $d(w)\leq d(y)$ on $D^{0s}$.
This proves upper semicontinuity on weak stable leaves.  Unstable
holonomy invariance then gives upper semicontinuity on $M$.

Suppose now that $\Wu$ is minimal.  The minimum locus of $d$ is
nonempty, open, and saturated by entire unstable leaves.  Its complement
is closed and unstable-saturated, so minimality makes that complement
empty.  Thus $d$ is constant, and the inclusion in
\eqref{eq:holonomy-finite-constraints} is now an equality.

The holonomies $h_i$ persist on $D^{0s}$, and
their unstable path lengths are bounded above by $L_y$.  Hence we have 
\begin{equation}\label{eq:holonomy-uniform-length}
 E_{\max}(w)=E_{L_y}(w)(y)\qquad\text{for every }w\in D^{0s}.
\end{equation}
This is true for any $y$. Hence, by compactness we have a finite cover and the corresponding maximal value $L$ which gives~\eqref{eq:holonomy-uniform-length}.
\end{proof}

\begin{lemma}\label{lem:holonomy-extremal-equality}
If $\Wu$ is minimal, then
\[
 E_{\min}(y)=E_{\max}(y)\qquad\text{for every }y\in M.
\]
In fact, $E_\rho(y)=E_{\max}(y)$ for every $\rho>0$.
\end{lemma}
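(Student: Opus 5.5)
Since $E_{\max}(y)\subset E_\rho(y)\subset E_{\min}(y)$ and $E_\rho$ decreases in $\rho$, it suffices to show $E_\rho(y)\subset E_{\max}(y)$ for every $\rho>0$ and every $y$. The plan is to use the flow to convert the large radius $L$ from \Cref{lem:holonomy-constant-rank} into an arbitrarily small radius. The key fact is the equivariance
\[
 DX^{-t}\circ D_y\operatorname{Hol}^u_{y,z}
 =D_{X^{-t}y}\operatorname{Hol}^u_{X^{-t}y,X^{-t}z}\circ DX^{-t},
\]
which holds because $X^{-t}$ maps weak stable leaves to weak stable leaves and strong unstable leaves to strong unstable leaves. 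It also uses the invariance $DX^{-t}E^s=E^s$. Together these give
\[
 DX^{-t}E_\rho(y)=\bigcap_{z'\in X^{-t}\Wu_\rho(y)}\bigl(D\operatorname{Hol}^u_{X^{-t}y,z'}\bigr)^{-1}E^s(z').
\]
This is the same computation that underlies \Cref{lem:holonomy-invariance}.

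Fix $\rho>0$ and $y$. Backward contraction along $\Wu$ (the Anosov estimate $\|DX^{-t}v^u\|\le Ce^{-\chi t}\|v^u\|$) gives, for $t$ large,
\[
 X^{t}\bigl(\Wu_{L}(X^{-t}y)\bigr)\supset \Wu_{L'}(y)\quad\text{with } L'\ge C^{-1}e^{\chi t}L,
\]
in the leafwise-distance sense. Equivalently, with $y'=X^{-t}y$ we get $X^{-t}\Wu_\rho(y)\supset \Wu_{\rho'}(y')$ for some $\rho'\ge C^{-1}e^{\chi t}\rho$. Choose $t$ so large that $\rho'\ge L$. Since the intersection defining $E_\rho$ shrinks as the set of $z$ grows, this yields
\[
 DX^{-t}E_\rho(y)\subset E_{\rho'}(y')\subset E_L(y')=E_{\max}(y'),
\]
where the last equality is \Cref{lem:holonomy-constant-rank}, applied at the point $y'$. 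By \Cref{lem:holonomy-invariance}, $E_{\max}(y')=DX^{-t}E_{\max}(y)$. Applying $DX^{t}$ then gives $E_\rho(y)\subset E_{\max}(y)$, hence equality. Letting $\rho\searrow0$ gives $E_{\min}(y)=E_{\max}(y)$.

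The main obstacle is the uniformity: the constant $L$ must work at every point, in particular at $y'=X^{-t}y$, which varies with $t$. This is exactly why \Cref{lem:holonomy-constant-rank} provides a single $L$ valid on all of $M$; it relies on minimality, compactness, and constant rank. A secondary point is that holonomies along long unstable paths are defined only on small weak-stable neighborhoods. This causes no trouble, because only differentials at the base point are involved, and these are well defined for every $z$ in the leaf.
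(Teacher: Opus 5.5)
Your overall strategy is the right one, but the key inclusion is run in the wrong time direction and is false as stated. The idea is to transport the uniform radius $L$ of \Cref{lem:holonomy-constant-rank} down to an arbitrary $\rho$ via the flow, using the equivariance of $D\operatorname{Hol}^u$ and the monotonicity of $E_\rho$ in $\rho$.

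\textbf{The error.} You claim $X^{-t}\Wu_\rho(y)\supset\Wu_{\rho'}(X^{-t}y)$ with $\rho'\ge C^{-1}e^{\chi t}\rho$. This cannot hold. The map $X^{-t}$ contracts unstable leaves, so $X^{-t}\Wu_\rho(y)\subset\Wu_{Ce^{-\chi t}\rho}(X^{-t}y)$ is a tiny plaque. Your preceding display $X^{t}(\Wu_{L}(X^{-t}y))\supset\Wu_{L'}(y)$ is correct. Applying $X^{-t}$ to it gives $X^{-t}\Wu_{L'}(y)\subset\Wu_{L}(X^{-t}y)$, which is the opposite of what you wrote after ``equivalently''. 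With the true inclusion, your equivariance formula only yields $DX^{-t}E_\rho(y)\supset E_{Ce^{-\chi t}\rho}(X^{-t}y)$. That is a lower bound on $E_\rho(y)$, and it gives no information toward $E_\rho(y)\subset E_{\max}(y)$. So the chain $DX^{-t}E_\rho(y)\subset E_{\rho'}(y')\subset E_L(y')$ collapses at its first inclusion.

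\textbf{The fix.} Flow forward instead: put $y'=X^{t}y$ with $t$ large. Backward contraction gives $X^{-t}\Wu_L(X^ty)\subset\Wu_{Ce^{-\chi t}L}(y)\subset\Wu_\rho(y)$ once $Ce^{-\chi t}L\le\rho$. Equivalently, $X^{t}\Wu_\rho(y)\supset\Wu_L(X^ty)$. Your equivariance identity, with $X^{t}$ in place of $X^{-t}$, then gives $DX^{t}E_\rho(y)\subset E_L(X^ty)=E_{\max}(X^ty)=DX^{t}E_{\max}(y)$, hence $E_\rho(y)\subset E_{\max}(y)$.

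With this change the rest of your write-up goes through unchanged. That includes the reduction to proving $E_\rho\subset E_{\max}$ for every $\rho$, the use of a single $L$ valid at every point, and the remark that only differentials at the base point matter. The corrected argument is exactly the paper's proof.
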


\begin{proof}
By stabilization, $E_{\min}(y)=E_\rho(y)$ for some $\rho>0$.
Let $L$ be given by \Cref{lem:holonomy-constant-rank}.  For sufficiently
large $t>0$, backward contraction along unstable leaves gives
\[
 X^{-t}\bigl(\Wu_L(X^t y)\bigr)\subset\Wu_\rho(y).
\]
Hence,
\[
 DX^tE_\rho(y)\subset E_L(X^t y)=E_{\max}(X^t y).
\]
and by flow invariance we conclude that $E_{\min}(y)=E_\rho(y)\subset E_{\max}(y)$, hence they are equal.
\end{proof}

\begin{proposition}\label{prop:holonomy-joint-foliation}
Assume that $\Wu$ is minimal.  The common subbundle
$E=E_{\min}=E_{\max}\subset E^s$ is continuous and $DX^t$-invariant.
It integrates to an $X^t$-invariant foliation $\mathcal F\subset\Ws$
whose restriction to every stable leaf is a $C^1$ foliation.
Moreover, $E\oplus E^u$ integrates to an $X^t$-invariant $C^1$
foliation subfoliated by $\mathcal F$ and $\Wu$.
\end{proposition}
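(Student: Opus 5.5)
Continuity and invariance are already mostly in hand. By \Cref{lem:holonomy-invariance}, $E$ is $DX^t$-invariant and $D\operatorname{Hol}^u$-invariant. By \Cref{lem:holonomy-constant-rank} it has constant rank $d$ and satisfies $E=E_L$. In the proof of that lemma, near any $y$ the space $E(w)=K(w)$ is cut out by finitely many continuous constraints $(D_wh_i)^{-1}E^s(h_i(w))$ whose normals stay independent, so $E$ is continuous on weak stable disks. Unstable holonomy invariance, together with continuity of $D\operatorname{Hol}^u$ in the basepoints (which uses $E^u\in C^1$), then gives continuity on $M$.

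\textbf{Key idea for integrability.} Fix $y$ and a nearby $z\in\Wu_\rho(y)$. The holonomy $h=\operatorname{Hol}^u_{y,z}$ maps $\Wos_{loc}(y)$ to $\Wos_{loc}(z)$ and sends $\Ws$-plaques to $\Wos$-plaques only up to time shifts. Define
\[
\Sigma_z(y)=\{w\in\Ws_{loc}(y): h(w)\in\Ws_{loc}(z)\},
\]
the locus where the temporal distance vanishes. Writing $h(w)=X^{\tau(w)}(p(w))$ with $p(w)\in\Ws_{loc}(z)$, the function $\tau$ is $C^1$ on $\Ws_{loc}(y)$ because $h$ is $C^1$ and $\Ws$ is a $C^1$ subfoliation inside weak stable leaves. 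Moreover $\ker D_w\tau=(D_wh)^{-1}E^s(h(w))\cap T_w\Ws$. Take the finitely many $z_i$ from \Cref{lem:holonomy-constant-rank} and set $\Phi=(\tau_1,\dots,\tau_m)\colon\Ws_{loc}(y)\to\R^m$. It is $C^1$, and its differential has constant rank $\dim E^s-d$ with $\ker D\Phi=E$ near $y$. By the constant rank theorem, the level sets of $\Phi$ form a $C^1$ foliation of $\Ws_{loc}(y)$ tangent to $E$. Consequently $E$ is integrable within each stable leaf, with $C^1$ leaves.

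\textbf{Globalizing.} Uniqueness of integral manifolds follows: a $C^1$ foliation tangent to a continuous distribution means local leaves are fibres of $\Phi$. The local foliations from different charts, or from different choices of $z_i$, therefore agree on overlaps, since their tangent distributions coincide and each is locally given by level sets of a submersion. They patch to a foliation $\mathcal F\subset\Ws$, which is $X^t$-invariant by $DX^t$-invariance of $E$ and flow equivariance of holonomies.

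\textbf{Joint foliation (main obstacle).} For the saturation, set $V(y)=\bigcup_{z\in\Wu_{loc}(y)}\operatorname{Hol}^u_{y,z}(\mathcal F_{loc}(y))$. We need $\operatorname{Hol}^u_{y,z}(\mathcal F(y))\subset\mathcal F(z)$, i.e.\ that $\tau_z\equiv0$ on $\mathcal F_{loc}(y)$, and not merely that its derivative vanishes at $y$. Apply the derivative identity at every $w\in\mathcal F_{loc}(y)$, reading $h$ as the holonomy from $w$ to $h(w)$. Then $D_wh(E(w))=E(h(w))\subset E^s$, so $D\tau_z|_{E}=0$ along the leaf, and hence $\tau_z$ is constant, equal to $\tau_z(y)=0$, on the connected plaque. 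Thus $h$ maps $\mathcal F$-plaques into $\mathcal F$-plaques, and being $C^1$ with $D h(E)=E$, it maps them onto plaques.

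So $V(y)$ is the union of $\Wu$-plaques through an $\mathcal F$-plaque. Equivalently, it is the image of the $C^1$ map $(w,z)\mapsto\operatorname{Hol}^u_{y,z}(w)$ defined on $\mathcal F_{loc}(y)\times\Wu_{loc}(y)$, which parametrizes it via the product structure with $C^1$ $\Wu$. This map is injective because $\Wu$ and $\Ws$ are transverse, so $V(y)$ is a $C^1$ submanifold tangent to $E\oplus E^u$. The sets $V(y)$ are consistent under the $\mathcal F$ and $\Wu$ holonomies and $X^t$-invariant, so they give the $C^1$ foliation subfoliated by $\mathcal F$ and $\Wu$.

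The delicate point is the regularity bookkeeping for $\tau$ and for the parametrization. I would handle it by working in $C^1$ charts straightening $\Wu$, which is possible since $E^u\in C^1$.
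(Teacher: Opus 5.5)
Your proof follows the paper's route. The level sets of your $\tau_i$ on $\Ws_{\mathrm{loc}}(y)$ are exactly the pulled-back stable foliations defined by the finitely many $C^1$ holonomies $h_i$ of Lemma~\ref{lem:holonomy-constant-rank}, and your constant-rank argument is the paper's ``local $C^1$ foliation charts tangent to $E$''. Your computation $D\tau_z|_E=0$, from $D_wh(E(w))=E(h(w))\subset E^s$, is a correct and useful expansion of the paper's one-line claim that $\mathcal F$ integrates jointly with $\Wu$. It also works for every $z\in\Wu_{\mathrm{loc}}(y)$, not only for the $z_i$.

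One assertion is wrong as stated, although the conclusion survives. The map $(w,z)\mapsto\operatorname{Hol}^u_{y,z}(w)$ on $\mathcal F_{\mathrm{loc}}(y)\times\Wu_{\mathrm{loc}}(y)$ is $C^1$ in $w$, but in general not in $z$. By what you just proved, for fixed $w$ it is $z\mapsto\Ws_{\mathrm{loc}}(z)\cap\Wu_{\mathrm{loc}}(w)$, a strong stable holonomy between unstable plaques. Such holonomies are only H\"older in general; the $C^1$ hypothesis is on $E^u$, not on $E^s$. So this parametrization does not show that $V(y)$ is $C^1$.

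The repair is the one you mention at the end. In a $C^1$ chart straightening $\Wu$, $V(y)$ is locally the product of the unstable coordinate with the projection of $\mathcal F_{\mathrm{loc}}(y)$ to the transverse coordinates. That projection is a $C^1$ submanifold because $\mathcal F_{\mathrm{loc}}(y)$ is $C^1$ and transverse to $\Wu$. If you want $C^1$ foliation charts and not only $C^1$ leaves, take $\Wos_{\mathrm{loc}}(y)$ as the transverse factor and extend the $\tau_i$ to the whole weak stable disk. The extended map still has constant rank, with kernel $K=E$, so its level sets give a $C^1$ foliation of $\Wos_{\mathrm{loc}}(y)$ by $\mathcal F$-plaques.
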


\begin{proof}
The holonomies $h_i$ from the proof of
\Cref{lem:holonomy-constant-rank} provide local $C^1$ foliation charts
tangent to $E$ on weak stable disks.  Thus
$E$ is continuous and integrates to a foliation $\mathcal F\subset\Ws$.
Because the maps $h_i$ which locally define $\mathcal F$~(3.5) are unstable holonomies, $\mathcal F$ integrates jointly with $W^u$.  
\end{proof}

\section{Proofs}
\label{sec:proofs}

In this section we prove \Cref{thm:general-anosov} and \Cref{add:nondegenerate} and then
deduce \Cref{thm:codimension-one} and \Cref{cor:generic-conservative}.  We
would like to emphasize that our proof is merely a crystallization of
Burniol Clotet's proof~\cite{Burniol}, which, in turn, is based on Abe's~\cite{Abe} and Marcus'~\cite{Marcus}: we replace the algebraic renormalization argument with
a soft dynamical one and push the argument into higher dimensions.  We first
treat the case in which the unstable equivalence does not respect the weak unstable
foliation.  Renormalizing a non-trivial stable displacement produces a
non-trivial local $su$-integrability relation.  For the dichotomy, the
assumption that $E^u$ is $C^1$ makes the
associated temporal-distance functions $C^1$ along stable leaves, allowing us to create an
integrable invariant subbundle.  We then treat, in
\Cref{prop:general-centered}, the case in which the equivalence respects the
weak unstable foliation.  There the transported flow preserves each
weak unstable leaf of $X^t$, and correcting maps produce the conjugacy as in~\cite{Burniol, Abe}.

Note that we can dispose of the jointly integrable case.  Indeed, the
foliation tangent to $E^s\oplus E^u$ is subfoliated by $\Ws$ and
$\Wu$, so the second alternative of \Cref{thm:general-anosov} holds.  Joint integrability is also impossible under the hypothesis of~\Cref{add:nondegenerate}.

Thus we consider the case in which $\Ws$ and $\Wu$ are not jointly
integrable.  The suspension case in \Cref{thm:anosov-alternative} is then
ruled out, so $X^t$ is topologically mixing and $\Wu$ is minimal; we will use
this repeatedly.

\subsection{The local displacement}

Fix an unstable equivalence $\varphi\colon N\to M$.  It is convenient to
transplant the flow $Y^t$ to $M$.  Define
\[
  \widehat Y^t=\varphi\circ Y^t\circ\varphi^{-1}.
\]
Thus $\widehat Y^t$ is a continuous flow on $M$ and it preserves the strong
unstable foliation of $X^t$:
\[
  \widehat Y^t\bigl(\Wu(y)\bigr)
  =\Wu\bigl(\widehat Y^t(y)\bigr).
\]

\begin{lemma}\label{lem:general-local-decomposition}
For a sufficiently small $\delta>0$ there exist continuous maps
\[
  p,q\colon M\times(-\delta,\delta)\longrightarrow M
  \quad\text{and}\quad
  \beta\colon M\times(-\delta,\delta)\longrightarrow\R
\]
such that
\begin{align}
  p(y,t)&=[y,\widehat Y^t(y)]
  =\Ws_{\mathrm{loc}}(y)
   \cap\Wou_{\mathrm{loc}}\bigl(\widehat Y^t(y)\bigr),
   \\
  q(y,t)&=X^{\beta(y,t)}(p(y,t))
  \in\Wu(\widehat Y^t(y)).\label{eq:general-q}
\end{align}
Moreover,
\begin{equation}\label{eq:general-small}
 \sup_{y\in M}\bigl(\ds(y,p(y,t))+|\beta(y,t)|\bigr)
 \longrightarrow0
 \qquad\text{as }t\longrightarrow0,
\end{equation}
and
\begin{equation}\label{eq:general-u-bound}
  \du\bigl(q(y,t),\widehat Y^t(y)\bigr)\leq C_0
\end{equation}
for a uniform constant $C_0$.  For fixed $t$ the restriction
\[
  q(\,\mathord{\cdot}\,,t)|_{\Wu(y)}\colon
  \Wu(y)\longrightarrow\Wu(\widehat Y^t(y))
\]
is a homeomorphism.
\end{lemma}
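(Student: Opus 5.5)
The plan is to build $p$, $q$, $\beta$ directly from the local product structure of $X^t$, using only two facts: $\widehat Y^t$ is a continuous flow on the compact manifold $M$, and it permutes strong unstable leaves of $X^t$.

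\emph{Step 1: defining $p$ and $\beta$.} Since $M$ is compact, $(y,t)\mapsto\widehat Y^t(y)$ is uniformly continuous. Hence for $\delta>0$ small, $\widehat Y^t(y)$ lies within the local product structure radius of $y$ for all $|t|<\delta$ and all $y$. We then set $p(y,t)=[y,\widehat Y^t(y)]$. This point is well defined, unique, and continuous in $(y,t)$ by continuity of the bracket. Since $p(y,t)\in\Wou_{\mathrm{loc}}(\widehat Y^t(y))$, and the local weak unstable plaque is the local product of the flow direction with strong unstable plaques, there is a unique small $\beta(y,t)$ with $X^{\beta(y,t)}(p(y,t))\in\Wu_{\mathrm{loc}}(\widehat Y^t(y))$. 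This $\beta$ depends continuously on $(y,t)$. Defining $q=X^\beta\circ p$ gives \eqref{eq:general-q}.

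\emph{Step 2: the estimates.} At $t=0$ we have $\widehat Y^0(y)=y$, so $p(y,0)=y$ and $\beta(y,0)=0$. Uniform continuity on the compact set $M\times[-\delta/2,\delta/2]$ then gives the uniform convergence \eqref{eq:general-small}. For \eqref{eq:general-u-bound}, $q(y,t)$ and $\widehat Y^t(y)$ both lie in the same local plaque $\Wu_{\mathrm{loc}}$, whose intrinsic diameter is uniformly bounded. This gives $C_0$.

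\emph{Step 3: the homeomorphism claim.} This is the main step. Fix $t$ and $y$. For $y'\in\Wu(y)$, the leaf $\Wu(\widehat Y^t(y'))$ equals $\Wu(\widehat Y^t(y))$ by invariance, and $q(y',t)$ lies in it. So the map is well defined into the target leaf.

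The key observation is that on $\Wu(y)$ the map $y'\mapsto q(y',t)$ is exactly the strong stable holonomy. More precisely, it is the local weak-stable (stable-then-flow) holonomy from $\Wu(y)$ to the leaf $\Wu(\widehat Y^t(y))$, restricted to points at bounded distance. Indeed, $q(y',t)\in\Wos_{\mathrm{loc}}(y')\cap\Wu(\widehat Y^t(y))$, and it is the nearby such intersection point.

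I would show injectivity and surjectivity as follows.
\begin{itemize}
\item \emph{Injectivity.} Two points of the same strong unstable leaf cannot lie on the same local weak stable plaque unless they are equal, by local product structure.
\item \emph{Surjectivity.} Apply the same construction to the inverse time $-t$, starting from the image leaf. The composed map lands back on $\Wu(y)$ at a point locally weak-stable-related to the start, hence equal to it by uniqueness in local product charts.
\end{itemize}
Continuity of both maps follows from continuity of $p$ and $\beta$, since leaf topology and ambient topology agree locally on plaques.

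The delicate point, which I expect to be the main obstacle, is making ``nearby intersection'' unambiguous globally along the leaf. The weak stable plaque through $y'$ may meet the image unstable leaf many times. To handle this, I would argue via uniform smallness: all displacements are less than the product radius, so uniqueness within the local charts suffices.
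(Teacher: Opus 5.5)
Your Steps 1--2 are exactly what the paper's one-line proof (``uniform local product structure gives continuity and all listed properties'') has in mind, and your overall route is the same. The gap is in Step 3.

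\textbf{The injectivity bullet fails as stated.} In the paper's setting $\Wu$ is minimal. So $\Wu(y)$ meets every local weak stable plaque in a dense set, and distinct points of one unstable leaf routinely lie on a common local weak stable plaque. Your proposed remedy (all displacements are smaller than the product radius) does not fix this. If $q(y_1',t)=q(y_2',t)$, small displacement only tells you that $y_1'$ and $y_2'$ are \emph{ambiently} close. They may still lie on different plaques of $\Wu(y)$ inside the same product chart, where local uniqueness says nothing. The same defect affects two other parts of Step 3:
\begin{itemize}
\item the surjectivity round trip: $\widehat Y^{-t}(q(y',t))$ is ambiently close to $y'$ but need not lie on the plaque of $y'$;
\item your claim of continuity in the leaf topology.
\end{itemize}

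\textbf{The missing ingredient} is that $\widehat Y^{\pm t}$, $|t|<\delta$, is uniformly continuous for the intrinsic metric $\du$. This holds because $\widehat Y^t$ is a homeomorphism, jointly continuous in $t$, carrying $\Wu$-leaves onto $\Wu$-leaves. The image of a small plaque is then a connected, ambiently small subset of a single leaf, so it lies in one foliation chart. Its transverse coordinates form a countable connected set, hence a single point, so the image lies in one plaque.

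You also need that $\du(q(y',t),\widehat Y^t(y'))$ is uniformly \emph{small}, not merely bounded by $C_0$, once $\delta$ is small. This follows from \eqref{eq:general-small} together with comparability of $\du$ and $d$ on local plaques.

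With both facts, $q(y_1',t)=q(y_2',t)$ forces $\widehat Y^t(y_1')$ and $\widehat Y^t(y_2')$ to be $\du$-close, hence $y_1'$ and $y_2'$ to be $\du$-close. Only then does local product uniqueness give $y_1'=y_2'$. The same reasoning gives $q(q(y',t),-t)=y'$ on $\Wu(y)$ and, symmetrically, $q(q(z,-t),t)=z$ on $\Wu(\widehat Y^t(y))$, which yields bijectivity, as well as continuity in the leaf topology. With this inserted your argument is complete; the paper's proof uses the same fact implicitly.
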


\begin{proof}
For sufficiently small $t$, $p$, $q$ and $\beta$ are all uniquely defined by local product
structure. Uniform local product structure gives continuity and
all listed properties.
\end{proof}

Fix $\delta_0>0$ so that the local decomposition in
\Cref{lem:general-local-decomposition} is defined for $|t|<\delta_0$.
Call such $t$ \emph{centered} if
\begin{equation}
  p(y,t)=
  \Ws_{\mathrm{loc}}(y)
  \cap\Wou_{\mathrm{loc}}\bigl(\widehat Y^t(y)\bigr)=y
  \qquad\text{for every }y\in M.
\end{equation}

\begin{lemma}\label{lem:centered-times}
If $|t|<\delta_0$ and $p(y,t)=y$ for some $y\in M$, then $t$ is centered.
\end{lemma}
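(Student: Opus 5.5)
We need to show that the set $Z_t=\{y\in M: p(y,t)=y\}$ is all of $M$ as soon as it is nonempty. We are in the standing situation where $X^t$ is topologically mixing, so $\Wu$ is minimal. The plan is to prove that $Z_t$ is closed, $\Wu$-saturated and $X^s$-invariant, and then use minimality.

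\textbf{Closedness.} This is immediate, because $p(\cdot,t)$ is continuous by \Cref{lem:general-local-decomposition}.

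\textbf{Saturation by strong unstable leaves.}
1. Let $y\in Z_t$. Then $y\in\Wou_{\mathrm{loc}}(\widehat Y^t(y))$, so $\widehat Y^t(y)\in\Wou(y)$.
2. Since $\widehat Y^t$ maps $\Wu(y)$ onto $\Wu(\widehat Y^t(y))$, and $\Wu(\widehat Y^t(y))=\Wu(X^{\tau}y)$ for some small $\tau$, the weak unstable leaf $\Wou(y)$ is mapped into itself leafwise. Hence for every $z\in\Wu(y)$ we get $\widehat Y^t(z)\in\Wou(y)=\Wou(z)$.
3. It follows that $p(z,t)$ lies in $\Ws_{\mathrm{loc}}(z)\cap\Wou(z)$, which is the point $z$, provided the intersection is taken locally.

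\textbf{The main obstacle.} Step 3 needs $\widehat Y^t(z)$ to lie in the \emph{local} plaque $\Wou_{\mathrm{loc}}(z)$, not merely in the global leaf $\Wou(z)$. The required smallness is $d(z,\widehat Y^t(z))$ small uniformly in $z$, which holds since $\widehat Y^t$ is uniformly close to the identity for $|t|<\delta_0$ (choosing $\delta_0$ small). We must also exclude a global leaf $\Wou(z)$ returning close to $z$ along $\Ws_{\mathrm{loc}}(z)$. For that, argue by continuity along a path in $\Wu(y)$ from $y$ to $z$. The point $p(\cdot,t)$ moves continuously along the path. At every point of the path, $p$ lies in the discrete set $\Ws_{\mathrm{loc}}(\cdot)\cap\Wou(\cdot)$, and that set contains the basepoint as an isolated point. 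Since $p$ starts at the basepoint, it stays equal to the basepoint. So $Z_t$ is $\Wu$-saturated.

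\textbf{Invariance and conclusion.} Since $X^s$ commutes with the local product structure, it remains to control $\widehat Y^t$ under $X^s$. We cannot conjugate directly, so we avoid needing invariance. Minimality of $\Wu$ together with closedness and saturation already gives that $Z_t$ is either empty or all of $M$: a nonempty closed $\Wu$-saturated set contains the closure of a leaf, which is $M$. Hence $p(y,t)=y$ for one $y$ forces $Z_t=M$, and $t$ is centered.
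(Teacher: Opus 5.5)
Your overall structure matches the paper's: $Z_t$ is closed, $Z_t$ is $\Wu$-saturated, and minimality of $\Wu$ then forces $Z_t=M$. Steps 1--2 and the final minimality argument are fine, and the paragraph on $X^s$-invariance is unnecessary but harmless. The gap is in how you resolve the obstacle you correctly identified. The set $\Ws_{\mathrm{loc}}(w)\cap\Wou(w)$ is not discrete, and $w$ is not isolated in it. In the standing mixing situation even $\Wu(w)$ is dense in $M$, so by local product structure the global leaf $\Wou(w)$ meets $\Ws_{\mathrm{loc}}(w)$ in a dense set. Hence ``$p$ starts at the basepoint, so it stays there'' does not follow. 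Also, the candidate set changes along the path, so even a true discreteness statement would need to be uniform to give openness in your continuity argument.

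The repair is to use countability instead of discreteness. Take $z\in\Wu_\epsilon(y)$, with $\epsilon$ uniform, and a path $\gamma$ from $y$ to $z$ inside this plaque. Let $h_s$ be the local $\Wou$-holonomy from $\Ws_{\mathrm{loc}}(\gamma(s))$ to $\Ws_{\mathrm{loc}}(y)$. By your Step 2, $p(\gamma(s),t)\in\Wou(\widehat Y^t(\gamma(s)))=\Wou(y)$. Therefore $s\mapsto h_s(p(\gamma(s),t))$ is a continuous path in the countable set $\Ws_{\mathrm{loc}}(y)\cap\Wou(y)$, since a leaf crosses a chart in countably many plaques. Such a path is constant, and at $s=0$ it equals $y$. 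So $p(\gamma(s),t)$ and $\gamma(s)$ both lie in $\Ws_{\mathrm{loc}}(\gamma(s))\cap\Wou_{\mathrm{loc}}(y)$, and local uniqueness gives $p(\gamma(s),t)=\gamma(s)$.

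This is exactly the uniform local $\Wu$-saturation the paper invokes; chaining such plaques along paths in $\Wu(y)$ then gives full saturation. Alternatively, you can argue as the paper implicitly does. Since $\widehat Y^t$ is uniformly continuous and maps $\Wu$-leaves to $\Wu$-leaves, it sends $\Wu_\epsilon(y)$ into a small plaque of $\Wu(\widehat Y^t(y))$ around $\widehat Y^t(y)\in\Wou_{\mathrm{loc}}(y)$. That small plaque lies in the local weak unstable plaque through every $z\in\Wu_\epsilon(y)$.
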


\begin{proof}
The set $S_t=\{y:p(y,t)=y\}$ is closed.  Local uniqueness of the
intersection defining $p$, together with the fact that $\widehat Y^t$
preserves $\Wu$, shows that $S_t$ is uniformly locally $\Wu$-saturated.  Hence it is
$\Wu$-saturated.  By minimality, if $S_t$ is nonempty, then $S_t=M$.
\end{proof}

{
\begin{lemma}\label{lem:centered-accumulation}
If there are non-zero centered times arbitrarily close to zero, then every
$t$ with $|t|<\delta_0$ is centered.  Moreover, $\varphi$ maps the
weak unstable foliation of $Y^t$ onto that of $X^t$.
\end{lemma}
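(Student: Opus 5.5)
The plan is to show that the set $T$ of centered times in $(-\delta_0,\delta_0)$ is closed under the local group structure and closed topologically. Then, if it accumulates at $0$, it contains an interval around $0$, and by connectedness it is all of $(-\delta_0,\delta_0)$.

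First, additivity. Suppose $s,t$ are centered with $|s|,|t|,|s+t|<\delta_0$. Being centered at $t$ means $\widehat Y^t(y)\in\Wou_{\mathrm{loc}}(y)$ for every $y$, since $p(y,t)=y$ says exactly that $y$ lies in the local weak unstable plaque of $\widehat Y^t(y)$. Then $\widehat Y^{s+t}(y)=\widehat Y^s(\widehat Y^t(y))$ lies in the weak unstable leaf of $\widehat Y^t(y)$, which is the weak unstable leaf of $y$. By \eqref{eq:general-small} this point is close to $y$, so it lies in the same local plaque, which gives $p(y,s+t)=y$. The same argument with $\widehat Y^{-t}$ shows that $-t$ is centered whenever $t$ is: $y=\widehat Y^{-t}(\widehat Y^t y)$ lies in the weak unstable leaf of $\widehat Y^t y$, hence of $\widehat Y^{-t}(\widehat Y^t y)$ applied at the point $\widehat Y^t y$. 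Second, closedness. Since $p$ is continuous, the condition $p(y,t)=y$ at a single fixed point $y$ is closed in $t$, and \Cref{lem:centered-times} shows that this single-point condition is equivalent to being centered.

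With these properties in hand, take nonzero centered $t_n\to0$. Integer multiples $kt_n$ with $|kt_n|<\delta_0$ are centered, and they form a $|t_n|$-net of $(-\delta_0,\delta_0)$. Letting $n\to\infty$ and using closedness, every $t$ in this interval is centered. The one point needing care is that the additivity step requires intermediate points to stay in local plaques. To handle this, I would shrink $\delta_0$ so that \eqref{eq:general-small} keeps $\widehat Y^t(y)$ within the local product neighbourhood for all $|t|<\delta_0$, and build $kt_n$ inductively one step at a time.

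For the second claim, fix $y$. The orbit arc $\{\widehat Y^t(y):|t|<\delta_0\}$ lies in $\Wou_{\mathrm{loc}}(y)$, so by the group property the whole $\widehat Y$-orbit of $y$ lies in $\Wou_X(y)$. Now $\varphi$ carries $Y$-orbits to $\widehat Y$-orbits and $\Wu_Y$ to $\Wu_X$. The weak unstable leaf $\Wou_Y(y')$ is the union of $\Wu_Y(Y^t y')$ over $t$, so its image is the union of $\Wu_X(\widehat Y^t\varphi(y'))$, which is contained in $\Wou_X(\varphi(y'))$. Equality follows from invariance of domain, since the two leaves have the same dimension, or more simply by applying the argument to $\varphi^{-1}$. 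The dimension count comes from $\varphi$ matching strong unstable leaves, plus the fact that $t\mapsto\widehat Y^t(y)$ is transverse to $\Wu_X$ because $Y$ has no periodic behaviour inside strong unstable leaves.

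The main obstacle I expect is in this last dimension and injectivity point: showing that the orbit of $\widehat Y$ is genuinely one-dimensional transversally inside $\Wou_X(y)$, rather than sliding within the strong unstable leaf. To handle it, I would use that $\widehat Y^t(y)\notin\Wu_X(y)$ for small $t\neq0$, since $Y^t(y')\notin\Wu_Y(y')$.
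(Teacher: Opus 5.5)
Your argument is essentially the paper's: you use closure of centered times under sums, closedness, and the net of integer multiples $kt_n$, and then write $\Wou_Y(x)$ as $\bigcup_t\Wu_Y(Y^tx)$. You are more careful than the paper about negative multiples and about staying in local plaques, although the correct reason there is that $\widehat Y^{s+t}(y)$ is joined to $y$ by a chain of two local plaques, not merely that it is close to $y$ in the same global leaf.

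The one point to repair is the reverse inclusion $\Wou_X(\varphi(x))\subset\varphi(\Wou_Y(x))$. Applying the argument to $\varphi^{-1}$ is circular, because the hypothesis is not symmetric in $X$ and $Y$. Invariance of domain only shows that each $\varphi(\Wou_Y(x))$ is open in its $X$-leaf, so you must add that these images partition the connected leaf $\Wou_X(\varphi(x))$. Alternatively, complete your transversality remark: the flow-time coordinate of $\widehat Y^t(y)$ inside $\Wou_{\mathrm{loc}}(y)$ is continuous and injective in $t$, hence covers a neighbourhood of $0$.
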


\begin{proof}
Centeredness is equivalent to
$\widehat Y^t(y)\in\Wou(y)$ for every $y$, and the set of centered times is
closed by continuity.  If $t_1$ and $t_2$ are centered and
$|t_1+t_2|<\delta_0$, then
$\widehat Y^{t_1+t_2}(y)\in\Wou(y)$ for every $y$, so $t_1+t_2$ is
centered.  Let $t_n\to0$ be non-zero centered times.
For any $|t|<\delta_0$, choose integers $k_n$ such that $k_nt_n\to t$. Since $k_nt_n$ is centered, so is $t$.
 Subdividing an arbitrary
time now gives $\widehat Y^t(y)\in\Wou_X(y)$ for every $t\in\R$.  Since
$\varphi$ maps $\Wu_Y$-leaves onto $\Wu_X$-leaves, this gives
$\varphi(\Wou_Y(x))=\Wou_X(\varphi(x))$.
\end{proof}
}

\subsection{Renormalization of a non-centered time}

Given a small unstable plaque $D^u(y)\subset\Wu(y)$, denote its local stable
saturation by
\[
  \operatorname{Sat}^s_{\mathrm{loc}}(D^u(y))
  =\bigcup_{z\in D^u(y)}\Ws_{\mathrm{loc}}(z).
\]
For $y'\in\Ws_{\mathrm{loc}}(y)$ write $y'\in\mathcal I(y)$ if for a sufficiently small unstable plaque
\begin{equation}
  D^u(y')
  \subset
  \operatorname{Sat}^s_{\mathrm{loc}}
  \bigl(D^u(y)\bigr).
\end{equation}
This is a local joint integrability condition.
After shrinking plaques approprietly, the
same condition holds with $y$ and $y'$ interchanged.

\begin{lemma}\label{lem:off-diagonal-su}
If there are non-centered times arbitrarily close to 0, then there exist
$y\in M$ and $y'\in\mathcal I(y)\setminus\{y\}$.
\end{lemma}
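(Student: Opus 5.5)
The plan is to use the displacement maps of \Cref{lem:general-local-decomposition} as a one-parameter family of stable holonomies that carry strong unstable plaques into strong unstable plaques. Then renormalize a small non-trivial stable displacement by the flow so that it becomes a displacement of unit size.

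\textbf{Step 1: stable displacement at a non-centered time.} Let $t$ be a non-centered time with $|t|$ small. By \Cref{lem:centered-times}, $p(y,t)\neq y$ for every $y\in M$, so $a(y,t):=\ds(y,p(y,t))>0$ everywhere. By \eqref{eq:general-small}, $a(y,t)$ is uniformly small.

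\textbf{Step 2: local integrability relation at scale $a(y,t)$.} Since $\widehat Y^t$ maps $\Wu(y)$ onto $\Wu(\widehat Y^t(y))$, for $z\in\Wu_{\mathrm{loc}}(y)$ the point $q(z,t)$ lies in $\Wu(q(y,t))$. Moreover $q(z,t)=X^{\beta(z,t)}p(z,t)$ with $p(z,t)\in\Ws_{\mathrm{loc}}(z)$. Flowing back by $-\beta(y,t)$, set $y'=p(y,t)$. Then $X^{\beta(z,t)-\beta(y,t)}p(z,t)$ lies in $\Wu(y')$ and also in $\Ws_{\mathrm{loc}}(z)$ shifted by a small time. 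Both $\beta$ and the weak-unstable coordinate are continuous, and the leaf $\Wu(y')$ meets the local weak stable leaf through $z$ in a single point near $p(z,t)$. I will check that this intersection point actually lies on $\Ws_{\mathrm{loc}}(z)$, i.e.\ that $\beta(z,t)=\beta(y,t)$ for $z$ near $y$.

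This is where the main obstacle sits. A priori the temporal shift $\beta$ may vary, so what we directly get is only a \emph{weak} integrability statement: a plaque of $\Wu(y')$ lies in the weak-stable saturation of a plaque of $\Wu(y)$. To upgrade this to $\mathcal I$, I use that for $z\in\Wu_{\mathrm{loc}}(y)$ the map $z\mapsto p(z,t)$ is the stable holonomy restricted to $\Wu_{\mathrm{loc}}(y)$. Its image, by the above, lies in $\Wou(y')$, and the flow-direction component is exactly $\beta(z,t)-\beta(y,t)$. I then use the flow to kill this component. The set of points $w$ where $\beta(\cdot,t)$ is locally constant along $\Wu$ is again unstable-saturated, by the same argument as \Cref{lem:centered-times}. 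If $\beta$ is not locally constant, I instead replace $t$ by a nearby time $t'$, chosen via continuity in $t$, so that $\beta$ matches. If that fails, I switch to the renormalization below, which also flattens $\beta$-variation, since temporal distance is flow invariant while stable distances shrink.

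\textbf{Step 3: renormalization.} Choose non-centered $t_n\to0$ and points $y_n$. Pick $s_n\le 0$ so that $\ds\bigl(X^{s_n}y_n,X^{s_n}p(y_n,t_n)\bigr)$ equals a fixed small constant $\epsilon$. This is possible because $a\to0$, and backward flow expands stable distance continuously. The relation obtained in Step 2 is $X^t$-invariant, because $\Wu$, $\Ws$ and saturations are flow invariant. Backward flow contracts the unstable plaques, but their size is controlled from below: the plaque size in Step 2 depends only on the local product structure constants at the pushed-forward points. To make this uniform I use the conjugated family $X^{s_n}\circ q(\cdot,t_n)\circ X^{-s_n}$, i.e.\ I apply the construction to the flow $\widehat Y$ conjugated and reparametrized. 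Passing to a limit, $X^{s_n}y_n\to y$ and $X^{s_n}p(y_n,t_n)\to y'$ with $\ds(y,y')=\epsilon$. The condition ``$D^u(y')\subset\operatorname{Sat}^s_{\mathrm{loc}}(D^u(y))$ with plaques of fixed size'' is closed under limits, so $y'\in\mathcal I(y)\setminus\{y\}$.
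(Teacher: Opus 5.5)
Your setup matches the paper's: the non-vanishing displacement from \Cref{lem:centered-times}, backward renormalization to a fixed stable size, and the conjugated maps $X^{s_n}\circ q(\cdot,t_n)\circ X^{-s_n}$ on whole unstable leaves. But the step that actually produces \emph{strong} integrability is missing, and the justification you give for it is wrong.

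In Step 2 the ``weak'' relation has no content. For any $y'\in\Ws_{\mathrm{loc}}(y)$, local product structure already says that weak-stable holonomy carries $\Wu_{\mathrm{loc}}(y)$ into $\Wu_{\mathrm{loc}}(y')$. Everything therefore rests on removing the flow component $\beta(z,t)-\beta(y,t)$, and none of your three fixes does this:
\begin{enumerate}
\item The set where $\beta(\cdot,t)$ is locally constant along $\Wu$ may well be empty.
\item A single scalar $t'$ cannot force $\beta(z,t')=\beta(y,t')$ for all $z$ in an open plaque.
\item If the strong relation held at some fixed small $t$, then already $p(y,t)\in\mathcal I(y)\setminus\{y\}$ and no renormalization would be needed.
\end{enumerate}
Renormalization also does not ``flatten'' the $\beta$-variation. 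Because the flow coordinate is flow invariant, conjugating by $X^{s_n}$ leaves it exactly unchanged. And under backward flow stable distances grow, not shrink.

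As a result, the last sentence of Step 3 applies ``closed under limits'' to a condition you have not shown at any finite $n$: the inclusion $D^u(X^{s_n}p(y_n,t_n))\subset\operatorname{Sat}^s_{\mathrm{loc}}(D^u(X^{s_n}y_n))$ on plaques of fixed size. What holds at stage $n$ is only the weak statement. On the whole leaf $\Wu(X^{s_n}y_n)$, the map $Q_n=X^{s_n}\circ q(\cdot,t_n)\circ X^{-s_n}$ is a homeomorphism onto $\Wu(Q_n(X^{s_n}y_n))$, with $Q_n(w)=X^{\beta(X^{-s_n}w,\,t_n)}P_n(w)$ and $P_n(w)=X^{s_n}p(X^{-s_n}w,t_n)\in\Ws(w)$.

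The missing idea is \eqref{eq:general-small}: $\sup_M|\beta(\cdot,t_n)|\to0$ simply because $t_n\to0$. This bound survives the conjugation untouched, while the stable displacement at the base point has been blown up to $\epsilon$. The argument then runs entirely in the limit:
\begin{enumerate}
\item Every $z_n\in\Wu_\rho(Q_n(X^{s_n}y_n))$ equals $Q_n(w_n)$ with $w_n$ in an unstable plaque of uniform size, by uniform properness of weak-stable holonomy at scale $\epsilon$.
\item $d(z_n,P_n(w_n))\to0$.
\item $P_n(w_n)\in\Ws_{\mathrm{loc}}(w_n)$, since $P_n$ agrees with the local stable holonomy near the base point.
\item Hence every limit of $z_n$ lies in $\Ws_{\mathrm{loc}}(w)$, where $w=\lim w_n\in\Wu_{\mathrm{loc}}(y)$.
\item Since $Q_n(X^{s_n}y_n)\to y'$, this gives $\Wu_\rho(y')\subset\operatorname{Sat}^s_{\mathrm{loc}}(\Wu_{\mathrm{loc}}(y))$.
\end{enumerate}
The strong relation appears only in the limit, because the flow coordinate vanishes while the stable coordinate does not. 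That mechanism is what your proposal needs to state and use.
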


\begin{proof}
{ Recall that by \Cref{lem:centered-accumulation}, every sufficiently small non-zero time
is non-centered.  Choose $t_n\searrow0$.
}
Fix $y_0\in M$ and let
\[
  p_n=p(y_0,t_n)\in\Ws_{\mathrm{loc}}(y_0).
\]
Then $p_n\neq y_0$.  Fix a small
$L>0$.  Flowing backwards expands intrinsic stable distance $d^s$, so there is a
first time $T_n>0$ such that
\begin{equation}\label{eq:general-stopping}
 \ds\bigl(X^{-T_n}(y_0),X^{-T_n}(p_n)\bigr)=L.
\end{equation}
Equation \eqref{eq:general-small} implies $T_n\to\infty$.  Passing to a
subsequence, we have
\[
  y_n:=X^{-T_n}(y_0)\longrightarrow y,
  \qquad
  y_n':=X^{-T_n}(p_n)\longrightarrow y'.
\]
By \eqref{eq:general-stopping},
\[
  y'\in\Ws_{\mathrm{loc}}(y),
  \qquad \ds(y,y')=L.
\]

For $w\in\Wu(y_n)$ set $z=X^{T_n}(w)\in\Wu(y_0)$ and define
\begin{align*}
 F_n(w)&=X^{-T_n}\bigl(\widehat Y^{t_n}(z)\bigr),\\
 P_n(w)&=X^{-T_n}(p(z,t_n)),\\
 Q_n(w)&=X^{-T_n}(q(z,t_n)).
\end{align*}
The map $F_n$ is a homeomorphism from $\Wu(y_n)$ onto an unstable leaf,
and $Q_n$ is the weak stable holonomy homeomorphism between the same two
leaves.  Backward contraction along $\Wu$ and
\eqref{eq:general-u-bound} yield
\begin{equation}\label{eq:general-FQ}
 \sup_{w\in\Wu(y_n)}
 \du\bigl(F_n(w),Q_n(w)\bigr)
\longrightarrow0,\, n\to\infty.
\end{equation}
Also, \eqref{eq:general-q} gives the exact flow displacement
\begin{equation}\label{eq:general-QP}
 Q_n(w)=X^{\beta(z,t_n)}(P_n(w)),
 \qquad
 \sup_{w\in\Wu(y_n)}
 d\bigl(Q_n(w),P_n(w)\bigr)\longrightarrow0, \quad n\to\infty,
\end{equation}
where the convergence follows from \eqref{eq:general-small}.

Choose a small $\rho>0$ and consider the unstable plaques
\[
  D_n^u:=\Wu_\rho\bigl(F_n(y_n)\bigr).
\]
Since $P_n(y_n)=y_n'$, equations
\eqref{eq:general-FQ}--\eqref{eq:general-QP} show that the centers
$F_n(y_n)$ and $y_n'$ converge together to $y'$.  Thus $D_n^u$ and the plaques
$\Wu_\rho(y_n')$ both converge in the $C^0$ plaque topology to
$\Wu_\rho(y')$.

The holonomies $Q_n$ are uniformly proper on $W^u_{loc}(y_n)$.  Hence, after shrinking $\rho$, there exists
$\rho'>\rho$ such that every sequence $\{z_n\}\in D_n^u$ can be written as
$z_n=F_n(w_n)$ with $w_n\in\Wu_{\rho'}(y_n)$.  By passing to a subsequence, we have
$w_n\to w\in\Wu_{\mathrm{loc}}(y)$.  By
\eqref{eq:general-FQ}--\eqref{eq:general-QP}, the points $z_n$ and
$P_n(w_n)$ have the same limit.  But
\[
  P_n\bigl(\Wu_{\rho'}(y_n)\bigr)
  \subset\operatorname{Sat}^s_{\mathrm{loc}}\bigl(\Wu_{\rho'}(y_n)\bigr),
\]
so continuity of the stable foliation places this limit in
$\Ws_{\mathrm{loc}}(w)$.  Since $\{z_n\}$ was arbitrary, it follows that
\[
  \Wu_{\rho}(y')
  \subset
  \operatorname{Sat}^s_{\mathrm{loc}}
  \bigl(\Wu_{\mathrm{loc}}(y)\bigr).
\]
Thus $y'\in\mathcal I(y)$.
\end{proof}

\begin{lemma}\label{lem:codim-one-joint-integrability}
Assume that $\dim E^s=1$.  If there are non-centered times arbitrarily
close to zero, then $\Ws$ and $\Wu$ are jointly integrable.
\end{lemma}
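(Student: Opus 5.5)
Starting from \Cref{lem:off-diagonal-su}, I obtain $y\in M$ and $y'\in\mathcal I(y)\setminus\{y\}$. Since $\dim E^s=1$, the local stable leaf $\Ws_{\mathrm{loc}}(y)$ is an arc, and $y'$ is one of its points distinct from $y$. The idea is to spread the local joint integrability relation over a full neighbourhood and then globalize it using minimality of $\Wu$, which we have because the non-jointly-integrable case forces mixing.

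\textbf{Step 1: the set $\mathcal I(y)$ is closed under the right operations.} Consider the set $\mathcal J=\{(y,y'):y'\in\mathcal I(y)\}$. First, it is invariant under the flow, since $X^t$ preserves $\Ws$ and $\Wu$; this requires shrinking plaques. Second, it is invariant under unstable holonomy. If $y'\in\mathcal I(y)$, then the local stable holonomy carries $D^u(y)$ into $D^u(y')$. Hence for $z\in D^u(y)$, the point $z'=\Ws_{\mathrm{loc}}(z)\cap D^u(y')$ satisfies $z'\in\mathcal I(z)$, with the same plaques. Iterating along unstable leaves shows that the relation propagates along all of $\Wu(y)$, and the stable displacement $\ds(z,z')$ varies continuously and stays nonzero. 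Third, a limit of pairs in $\mathcal J$ whose stable displacement stays in a compact range $[a,b]\subset(0,L]$ again lies in $\mathcal J$. This is the same limiting argument as at the end of \Cref{lem:off-diagonal-su}, using continuity of the stable foliation and $C^0$ convergence of plaques.

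\textbf{Step 2: globalize and fill in small displacements.} By minimality of $\Wu$ together with closedness, every $x\in M$ admits a point $x'\in\Ws_{\mathrm{loc}}(x)$ with $x'\in\mathcal I(x)$ and $\ds(x,x')$ bounded away from $0$ and from $\infty$. Flowing forward by $X^t$ contracts $\ds(x,x')$. By invariance, for every $x$ and every small $\epsilon$ I get partners $x'\in\mathcal I(x)$ with $\ds(x,x')\to0$ along sequences of times.

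To obtain all displacements, not just a discrete sequence, I would use one-dimensionality. Fix $x$. The set $A(x)\subset\Ws_{\mathrm{loc}}(x)$ of points $x'$ with $x'\in\mathcal I(x)$ is closed, by the closedness in Step 1 near nonzero displacement and trivially at $x$. It is also closed under composition of relations, locally: if $x'\in\mathcal I(x)$ and $x''\in\mathcal I(x')$, then $x''\in\mathcal I(x)$ after shrinking plaques, because stable holonomies compose. So $A(x)$ behaves like a closed local "pseudo-group orbit" on an interval.

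\textbf{Step 3: the main obstacle, connectedness of $A(x)$.} The hardest point is showing that $A(x)$ contains a whole neighbourhood of $x$ in the arc $\Ws_{\mathrm{loc}}(x)$, rather than being merely a closed set accumulating at $x$. Here is what I would try. Using the partners with $\ds(x,x')\to0$ and compositions of stable holonomies along a chain $x,x_1,x_2,\dots$, the displacements add, with orientation, in the one-dimensional leaf, up to uniformly controlled distortion. This should make $A(x)$ $\epsilon$-dense in a fixed subarc for every $\epsilon$; orientation is needed here, which is available on a double cover if necessary. Closedness then gives the whole subarc.

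Once $A(x)\supset\Ws_{\rho}(x)$ for all $x$, the stable saturation $\bigcup_{z\in D^u(x)}\Ws_{\rho}(z)$ contains the plaque $D^u(x')$ for each nearby $x'$. So this saturation is a topological $C^0$ manifold of dimension $\dim E^u+1$, subfoliated by both $\Ws$ and $\Wu$. These local pieces patch into an $X^t$-invariant foliation, which is exactly joint integrability.
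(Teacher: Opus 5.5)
There is a genuine gap, at exactly the step you flag as the main obstacle. By starting from the bare conclusion of \Cref{lem:off-diagonal-su}, a single pair $y'\in\mathcal I(y)\setminus\{y\}$, you discard the information that makes the one-dimensional case work. You are then left trying to prove an abstract statement about the local relation $\mathcal I$, and Steps~1--3 do not establish it.

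Closedness of $A(x)$ plus accumulation at every point is not enough, because a closed subset of an arc in which every point is accumulated can be a Cantor set. Your chain argument needs, at every point of $A(x)$, partners accumulating on a \emph{prescribed} side; at the left endpoint of a putative gap you need partners approaching from the right. But your small partners come from flowing forward some pair whose side you do not control, and an orientation double cover does not fix this. Composing $\sim 1/\epsilon$ relations also needs plaque sizes that do not degenerate along arbitrarily long chains, which is not justified.

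Step~2 has a related problem. Propagating a partner along an unstable leaf does not keep $\ds(z,z')$ in a fixed compact range $[a,b]\subset(0,L]$; it can drift to $0$ or leave the local scale. So the set of points with a partner at displacement in $[a,b]$ is closed but not $\Wu$-saturated, and minimality does not give a partner at every $x$.

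The missing idea is to reuse the \emph{continuous one-parameter family} of displacements that $\widehat Y$ provides at every point. In the proof of \Cref{lem:off-diagonal-su}, fix $n$. The map $s\mapsto X^{-T_n}(p(y_0,s))$, for $s$ between $0$ and $t_n$, is a continuous path in the one-dimensional stable leaf from $y_n$ to $y_n'$. By the intermediate value theorem, every point $y_n''$ of the stable interval $J_n$ between them equals $X^{-T_n}(p(y_0,s_n))$ for some $s_n$ with $|s_n|\leq|t_n|\to0$. Rerunning the plaque-limit argument with $s_n$ in place of $t_n$ and the same $T_n$ puts the whole limit interval $J$ into $\mathcal I(y)$ at once, with no closure or chaining.

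To make $y$ an interior point, show that small positive and negative times displace $y_0$ to opposite sides. If $p(y_0,-a)=p(y_0,b)$, then $p(\widehat Y^{-a}(y_0),a+b)=\widehat Y^{-a}(y_0)$, so $a+b$ is centered by \Cref{lem:centered-times}, contradicting \Cref{lem:centered-accumulation}. The same renormalization with negative times then gives an interval $\bar J\subset\mathcal I(y)$ on the other side.

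Globalization is also simpler than your Step~2. Since $y_0$ was arbitrary and $y$ lies in the orbit closure of $y_0$, the open $X^t$-invariant set of points admitting a local product disk saturated by both $\Ws$ and $\Wu$ meets every orbit closure. Hence it is all of $M$.
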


\begin{proof}
Use the notation from the proof of \Cref{lem:off-diagonal-su}, and let
$J_n\subset\Ws_{\mathrm{loc}}(y_n)$ be the stable interval with endpoints
$y_n$ and $y_n'$.  These intervals converge to the stable interval
$J\subset\Ws_{\mathrm{loc}}(y)$ with endpoints $y$ and $y'$.

Fix $y''\in J$ and choose $y_n''\in J_n$ with $y_n''\to y''$.  Set
$I_n=[\min\{0,t_n\},\max\{0,t_n\}]$.  The map
\[
  s\longmapsto X^{-T_n}\bigl(p(y_0,s)\bigr),
  \qquad s\in I_n,
\]
is a continuous path in the one-dimensional stable leaf joining $y_n$ to
$y_n'$.  Hence there is $s_n$ between $0$ and $t_n$ such that
\[
  X^{-T_n}\bigl(p(y_0,s_n)\bigr)=y_n''.
\]
In particular, $s_n\to0$.  Repeating the final plaque-convergence argument
of \Cref{lem:off-diagonal-su}, with $s_n$ in place of $t_n$ and the same
$T_n$, gives
\[
  \Wu_\rho(y'')
  \subset
  \operatorname{Sat}^s_{\mathrm{loc}}
  \bigl(\Wu_{\mathrm{loc}}(y)\bigr)
\]
after shrinking $\rho$ if necessary.  Thus $J\subset\mathcal I(y)$.

{
The stable displacements corresponding to sufficiently small positive and
negative times lie on opposite sides of $y_0$.  Indeed, otherwise
continuity would give arbitrarily small $a,b>0$ such that
$p(y_0,-a)=p(y_0,b)$.  Then $\widehat Y^{-a}(y_0)$ and
$\widehat Y^b(y_0)$ would lie in the same weak unstable leaf, so
\[
 p\bigl(\widehat Y^{-a}(y_0),a+b\bigr)=\widehat Y^{-a}(y_0).
\]
By \Cref{lem:centered-times}, $a+b$ would be centered, contradicting
\Cref{lem:centered-accumulation} and the hypothesis.

Using the same $T_n$ as above, backward expansion along $\Ws$ and the
intermediate value theorem give $\bar t_n>0$ such that $\bar t_n\to0$ and
\[
 \ds\bigl(X^{-T_n}(y_0),X^{-T_n}(p(y_0,-\bar t_n))\bigr)=L,
\]
where $X^{-T_n}(p(y_0,-\bar t_n))$ lies on the side opposite to $y_n'$.
After passing to a subsequence,
\[
 X^{-T_n}(p(y_0,-\bar t_n))\longrightarrow\bar y'
\]
for some $\bar y'$ on the side of $y$ opposite to $y'$.  Repeating the
argument above with $-\bar t_n$ in place of $t_n$, and with the same $T_n$,
shows that the stable interval $\bar J$ with endpoints $y$ and $\bar y'$
is contained in $\mathcal I(y)$.  Hence $y$ is an interior point of
$J\cup\bar J$, and the unstable plaques through a smaller interval about
$y$ form a local product disk saturated by both $\Ws$ and $\Wu$.

Let $U$ be the set of points admitting such a disk.  It is open and
$X^t$-invariant.  Since $y_0$ was arbitrary and
$y\in\overline{X^{\R}(y_0)}\cap U$, the closure of every orbit meets $U$.
The set $M\setminus U$ is closed and invariant, hence, it must be empty. 
Therefore $U=M$, and $\Ws$ and $\Wu$ are jointly integrable.
}
\end{proof}

\subsection{The subbundle step}

Recall that, by definition,
$\mathcal I(y)\subset\Ws_{\mathrm{loc}}(y)$, and if
$w\in\mathcal I(y)$, then a local strong unstable plaque through $w$ is
contained in the local stable saturation of a strong unstable plaque
through $y$.

\begin{lemma}
\label{lem:general-subbundle}
Let $X^t$ be an Anosov flow whose strong unstable foliation is minimal,
and assume that $E^u$ is $C^1$.  If
$\mathcal I(y)\setminus\{y\}\neq\varnothing$ for some $y$, then there is a
non-zero continuous $DX^t$-invariant subbundle $E\subset E^s$.
The subbundle $E$ integrates to an $X^t$-invariant foliation
$\mathcal F\subset\Ws$.  Moreover,
$E\oplus E^u$ integrates to an $X^t$-invariant foliation
subfoliated by $\mathcal F$ and $\Wu$.
\end{lemma}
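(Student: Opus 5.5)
The plan is to reduce the lemma to \Cref{prop:holonomy-joint-foliation}. Since $\Wu$ is minimal, that proposition already says $E=E_{\min}=E_{\max}\subset E^s$ is continuous and $DX^t$-invariant. It also says $E$ integrates to an $X^t$-invariant foliation $\mathcal F\subset\Ws$, and that $E\oplus E^u$ integrates to an invariant foliation subfoliated by $\mathcal F$ and $\Wu$. So the only thing left is to show $E\neq 0$. By \Cref{lem:holonomy-extremal-equality} and \Cref{lem:holonomy-constant-rank}, $\dim E$ is constant. It therefore suffices to find one point $y$ with $E_{\min}(y)\neq 0$, i.e.\ $E_\rho(y)\neq0$ for small $\rho$.

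To produce such a point, take $y$ and $y'\in\mathcal I(y)\setminus\{y\}$. By definition, a small unstable plaque $D^u(y')$ lies in the local stable saturation of $D^u(y)$. Reversing roles (as remarked after the definition of $\mathcal I$), for every $z\in\Wu_\rho(y)$ the local stable leaf $\Ws_{\mathrm{loc}}(z)$ meets $D^u(y')$ at a point $z'$. Moreover, $z'$ is the image of $y'$ under the unstable holonomy $\operatorname{Hol}^u_{y,z}$ between weak stable leaves. Concretely, $\operatorname{Hol}^u_{y,z}(y')$ lies in $\Wu(y')\cap\Wos_{\mathrm{loc}}(z)$, and this intersection is $z'\in\Ws_{\mathrm{loc}}(z)$. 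Thus $\operatorname{Hol}^u_{y,z}$ maps the point $y'$ of $\Ws_{\mathrm{loc}}(y)$ into $\Ws_{\mathrm{loc}}(z)$, so the holonomy is not just the weak stable holonomy composed with a nontrivial time shift at $y'$. This is a single point, however, and we need a tangent vector at $y$.

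To obtain the tangent direction, I would apply $X^{t}$ with $t\to+\infty$. Set $y_t=X^t y$ and $y'_t=X^t y'$, so $\ds(y_t,y'_t)\to0$. Flow invariance of holonomies shows that $\operatorname{Hol}^u_{y_t,z}$ maps $y'_t$ into $\Ws(z)$ for all $z\in\Wu_{\rho_t}(y_t)$, with $\rho_t\to\infty$. Pass to a subsequence where $y_t\to\bar y$ and the unit vector $v_t=(\exp^s_{y_t})^{-1}(y'_t)/\|\cdot\|$ converges to $\bar v\in E^s(\bar y)$.

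The key analytic step is differentiation. Write $\Delta_z(w)$ for the flow component of $\operatorname{Hol}^u_{y_t,z}(w)$ relative to $\Ws_{\mathrm{loc}}(z)$, a $C^1$ function on $\Ws_{\mathrm{loc}}(y_t)$ since $E^u$ is $C^1$. It vanishes at $y_t$ and at $y'_t$. By the mean value theorem, its differential annihilates some unit vector along the stable geodesic from $y_t$ to $y'_t$. Taking the limit, using uniform continuity of $D\operatorname{Hol}^u$ on compact families, gives $D_{\bar y}\operatorname{Hol}^u_{\bar y,z}\bar v\in E^s(z)$ for every $z\in\Wu_R(\bar y)$ and every $R$. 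Hence $\bar v\in E_{\max}(\bar y)$, and $E\neq0$.

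The main obstacle is this step. The $C^1$ dependence of the holonomies must be uniform in the base point over bounded unstable distances, and the mean value theorem needs care in dimension $\dim E^s>1$. I would apply it to the one-variable restriction along the stable geodesic segment, where the vanishing at both endpoints makes it a scalar Rolle argument. If the uniformity in $z$ fails, I would instead fix finitely many $z$ as in \Cref{lem:holonomy-constant-rank}, since $E_{\max}=E_L$ needs only $z\in\Wu_L$.
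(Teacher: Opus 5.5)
Your proposal is correct, and it differs from the paper's proof in the key step. The paper argues by contradiction. Assuming $E\equiv 0$, it uses \Cref{lem:holonomy-extremal-equality} to pick finitely many $z_1=x,\dots,z_k\in\Wu_\rho(x)$ at the limit point $x$ with $\bigcap_i (D_x\operatorname{Hol}^u_{x,z_i})^{-1}E^s(z_i)=\{0\}$. It then observes that the product of the submersions defining the pulled-back foliations $\mathcal G_i$ has injective differential at $x$, hence is locally injective, uniformly on nearby weak stable plaques. The pushed-forward points $x_n\neq x_n'$ lie in the same plaque of every $\mathcal G_i$, so they must coincide, a contradiction.

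You argue directly instead. Rolle's theorem applied to each scalar function $\Delta_z$ along the shrinking stable segment from $y_t$ to $y_t'$, followed by a limit, exhibits an explicit vector $\bar v\in E_{\max}(\bar y)$: the limiting direction of the pushed-forward stable displacement. Your route avoids selecting a finite collection $z_i$ and avoids the uniform local-injectivity (inverse-function-type) statement. It also tells you where $E$ sits, namely that the limiting displacement direction lies in it. The paper's route packages all the $C^1$ input into a single injectivity statement and never has to track the Rolle points.

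Two small remarks on your write-up. First, the Rolle step is performed for $z\in\Wu(y_t)$, not $z\in\Wu_R(\bar y)$. For a fixed $\bar z\in\Wu_R(\bar y)$ you should choose $z_t\in\Wu_{R+1}(y_t)$ with $z_t\to\bar z$. Then use continuity of $(w,y,z)\mapsto D_w\operatorname{Hol}^u_{y,z}$, which is the same $C^1$ dependence the paper invokes. You also need continuity of the differential of the time coordinate on weak stable plaques, whose kernel is $E^s$.

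Second, the subsequence giving $y_t\to\bar y$ and $v_t\to\bar v$ is fixed before $\bar z$ is chosen. So this continuity is only needed for each fixed $\bar z$ separately, no uniformity in $z$ is required, and your proposed fallback to finitely many $z$ is unnecessary.
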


\begin{proof}
The subbundle $E$ was already constructed in Section~3,
\Cref{prop:holonomy-joint-foliation}; it remains only to prove that
$E=E_{\min}$ is non-zero.  Suppose, to the contrary, that $E$ is trivial,
and take
$y'\in\mathcal I(y)\setminus\{y\}$.  Thne there
is an $r>0$ such that
\[
 \operatorname{Hol}^u_{y,z}(y')\in\Ws_{\mathrm{loc}}(z)
 \qquad\text{for every }z\in\Wu_r(y).
\]
Choose $t_n\to+\infty$ such that $x_n:=X^{t_n}(y)\longrightarrow x$, $n\to\infty$. Since $y'\in\Ws_{\mathrm{loc}}(y)$, we also have $x_n':=X^{t_n}(y')\longrightarrow x$.

By \Cref{lem:holonomy-extremal-equality}, we can pick a small
$\rho>0$ there are finitely many points
$z_1,\ldots,z_k\in\Wu_\rho(x)$, including $z_1=x$, such that
\[
 \bigcap_{i=1}^k
 \bigl(D_x\operatorname{Hol}^u_{x,z_i}\bigr)^{-1}E^s(z_i)
 =E(x)=\{0\}.
\]
Let $\mathcal G_i$ be the pullback of the strong stable foliation by the
corresponding unstable holonomy.  The product of local submersions
defining the $\mathcal G_i$ has injective differential at $x$.
Consequently, after restricting to a local product box about $x$, the
common plaques of the continued foliations $\mathcal G_i$ contain no two
distinct points sufficiently close to $x$.  By the $C^1$ dependence of
unstable holonomy, the same conclusion holds, uniformly, on every weak
stable plaque obtained from $\Wos_{\mathrm{loc}}(x)$ by a sufficiently
short unstable holonomy, using the same points $z_1,\ldots,z_k$.

For all sufficiently large $n$, let $q_n\in\Wu_{\mathrm{loc}}(x)$ be the
point such that $x_n\in\Wos_{\mathrm{loc}}(q_n)$.  Then $q_n\to x$, and
both $x_n$ and $x_n'$ lie in $\Wos_{\mathrm{loc}}(q_n)$ and are
sufficiently close to $x$.  For each $i$, the point
\[
 \operatorname{Hol}^u_{q_n,z_i}(x_n)\in\Wu(x_n)
\]
is at uniformly bounded unstable distance from $x_n$.  Hence backward
contraction gives
\[
 X^{-t_n}\bigl(\operatorname{Hol}^u_{q_n,z_i}(x_n)\bigr)\in\Wu_r(y)
 \qquad\text{for all sufficiently large }n.
\]
Equivariance of unstable holonomy under the flow therefore implies
\[
 \operatorname{Hol}^u_{q_n,z_i}(x_n')
 \in\Ws_{\mathrm{loc}}\bigl(\operatorname{Hol}^u_{q_n,z_i}(x_n)\bigr),
 \qquad i=1,\ldots,k.
\]
Thus $x_n$ and $x_n'$ lie in the same plaque of every continued
$\mathcal G_i$.  Hence $x_n=x_n'$ for
all sufficiently large $n$, a contradiction.  Hence $E$ is non-trivial.
\end{proof}

%\begin{remark}
%The argument of \Cref{lem:codim-one-joint-integrability} gives an alternative
%proof (in any stable dimension), that the equivalence class $\mathcal I(y)$
 %is not discrete, even when $E^u$ is
%not $C^1$.
%\end{remark}

\subsection{The weak unstable preserving alternative}

The following proposition is the remaining ingredient in the proof of \Cref{thm:general-anosov}.
\begin{proposition}\label{prop:general-centered}
If every sufficiently small time is centered, then there exist $\lambda>0$
and a homeomorphism $\psi\colon N\to M$ such that
\[
  \psi\circ Y^t=X^{\lambda t}\circ\psi,
  \qquad t\in\R.
\]
\end{proposition}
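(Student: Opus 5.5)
Since every small time is centered, \Cref{lem:centered-accumulation} tells us that $\widehat Y^t(y)\in\Wou(y)$ for all $t\in\R$, and that $\widehat Y^t$ preserves both $\Wu$ and $\Wou$. The plan is to write $\widehat Y^t(y)=X^{\tau(y,t)}(u(y,t))$, where $u(y,t)\in\Wu(y)$ and $\tau$ is continuous, and then correct $\varphi$ along weak unstable leaves so that $\widehat Y^t$ becomes a reparametrization of $X^t$ and, eventually, a rescaling of it.

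\textbf{Step 1: the time change.} For small $t$, put $\tau(y,t)=\beta(y,t)$ from \Cref{lem:general-local-decomposition}, which is valid because $p(y,t)=y$. Thus $q(y,t)=X^{\tau(y,t)}(y)\in\Wu(\widehat Y^t(y))$. The map $\tau$ is a continuous additive cocycle over $\widehat Y^t$ modulo the unstable holonomy, in the sense that
\[
\tau(y,t+s)=\tau(y,t)+\tau(\widehat Y^t y,s).
\]
Since $\widehat Y^t$ maps $\Wu(y)$ to $\Wu(\widehat Y^ty)$, the flow $\widehat Y$ induces a flow on the space of $\Wu$-leaves inside each $\Wou$-leaf, and this space is identified with $\R$ via the $X$-time.

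The first claim is that $\tau(y,t)$ is constant along $\Wu(y)$. Indeed, $q(\cdot,t)$ maps $\Wu(y)$ to $\Wu(\widehat Y^t y)$, and the $X$-time displacement between two fixed $\Wu$-leaves inside one $\Wou$-leaf is a constant. Hence $\tau(\cdot,t)$ is $\Wu$-invariant, and by minimality of $\Wu$ and continuity it is a constant $c(t)$. Additivity then gives $c(t)=\lambda t$.

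\textbf{Step 2: $\lambda\neq0$.} If $\lambda=0$, then $\widehat Y^t$ would preserve every strong unstable leaf. I would rule this out as follows. Periodic orbits of $Y$ map to $\widehat Y$-orbits lying in a single $\Wu$-leaf. Applying the same reasoning with $Y$ and $X$ interchanged, this is impossible, since a compact orbit cannot lie in a strong unstable leaf on which the dynamics is expanding; alternatively, one can compare growth of the plaque $\Wu$ under $\widehat Y$, which must expand by conjugation. The sign of $\lambda$ should be positive by the same expansion comparison.

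\textbf{Step 3: correcting the conjugacy.} With $\widehat Y^t(y)\in\Wu(X^{\lambda t}y)$, define $H^t(y)=X^{-\lambda t}\widehat Y^t(y)\in\Wu(y)$. I want a homeomorphism $h$ preserving each $\Wu$-leaf with $h\circ\widehat Y^t=X^{\lambda t}\circ h$. Following Abe and Burniol Clotet, I will try
\[
h(y)=\lim_{T\to\infty}X^{-\lambda T}\widehat Y^{T}(y).
\]
Convergence comes from backward contraction along $\Wu$: the displacements $d^u(\widehat Y^t y,X^{\lambda t}y)$ are uniformly bounded for $|t|\le1$, and pulling back by $X^{-\lambda T}$ contracts them exponentially, so the sequence is Cauchy along the leaf. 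Equivalently, $h$ is the unique point of $\Wu(y)$ whose $X^{\lambda}$-orbit stays at bounded $\Wu$-distance from the $\widehat Y$-orbit, obtained by a shadowing/contraction argument. Setting $\psi=h\circ\varphi$ then gives the statement.

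\textbf{Main obstacle.} I expect the hardest step to be proving that $h$ is a homeomorphism, in particular that it is injective. Continuity and surjectivity within leaves should follow from uniformity of the contraction and a degree argument. For injectivity, I would apply the same construction to $\widehat Y^{-t}$ and $X^{-\lambda t}$ in the reverse direction. This gives the inverse because two points of one $\Wu$-leaf cannot both keep bounded forward $\widehat Y$-distance, since the $\widehat Y$-dynamics expands $\Wu$ as a conjugate of $Y$. Establishing that expansion, using uniform continuity of $\varphi$ and $\varphi^{-1}$ on $\Wu$-leaves, is the delicate point.
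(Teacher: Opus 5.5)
Your outline matches the paper's. You show that $\tau(\cdot,t)$ is constant on $\Wu$-leaves, hence constant by minimality, hence $\tau(t)=\lambda t$. Your correcting map $h=\lim_{T\to\infty}X^{-\lambda T}\circ\widehat Y^{T}$ is exactly $\psi\circ\varphi^{-1}$ for the paper's $\psi=\lim X^{-\lambda T}\circ\varphi\circ Y^{T}$.

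There is, however, a genuine gap: you assert that $\lambda>0$ (``should be positive by the same expansion comparison'') but never prove it. This step is load-bearing. For $\lambda<0$ the map $X^{-\lambda T}$ expands $\Wu_X$, so the Cauchy estimate in your Step 3 fails and $h$ need not exist. A plaque-growth comparison can be made to work. But it needs exactly the control of $\varphi^{\pm1}$ between $\du_X$ and $\du_Y$ on large plaques that you yourself flag as the delicate, unresolved point. For the same reason, your injectivity plan (reverse limit plus a uniqueness/shadowing argument, with expansion of $\widehat Y$ on $\Wu_X$) is left open.

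The missing idea is to run the reversed correction on $N$ first, with $\Theta_T=Y^{-T}\circ\varphi^{-1}\circ X^{\lambda T}$.
\begin{itemize}
\item For $0\le s\le 1$, the point $Y^{-s}\varphi^{-1}X^{\lambda s}(z)$ lies in $\Wu_Y(\varphi^{-1}(z))$ at uniformly bounded distance. Contraction of $Y^{-T}$ along $\Wu_Y$ then gives $\du_Y(\Theta_{T+s}(y),\Theta_T(y))\le Ce^{-\chi T}$ \emph{for every sign of} $\lambda$.
\item Hence $\Theta_T\to\Theta$ uniformly, with $\Theta$ a continuous surjection satisfying $\Theta\circ X^{\lambda t}=Y^t\circ\Theta$.
\item If $\lambda=0$, this forces $Y^t=\mathrm{id}$.
\item If $\lambda<0$, then $X^{\lambda T}$ contracts $\Wu_X$-leaves, so $\Theta$ is constant on each $\Wu_X$-leaf. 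By minimality $\Theta$ is constant, contradicting surjectivity.
\end{itemize}
Once $\lambda>0$, your limit exists. Moreover, $X^{-\lambda T}\circ\varphi\circ Y^{T}$ and $\Theta_T$ are exact inverses for each $T$, so their uniform limits are mutually inverse. No degree argument, no uniqueness characterization, and no expansion estimate for $\widehat Y$ on $\Wu_X$ is needed. Measuring the contraction in $\du_Y$ rather than $\du_X$ is what removes the delicate point.
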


\begin{proof}
For every sufficiently small $t$, centeredness gives a unique time
$\tau(y,t)$ such that
\[
  X^{-\tau(y,t)}\bigl(\widehat Y^t(y)\bigr)\in\Wu_X(y).
\]
The function $\tau(\cdot,t)$ is constant on strong unstable leaves.  By
minimality it is constant on $M$, and we can write it as $\tau(t)$.  Composition
of the local leaf relations gives
\[
  \tau(t+s)=\tau(t)+\tau(s)
\]
whenever $s,t$, and $s+t$ are sufficiently small.  Therefore
$\tau(t)=\lambda t$ for small $t$ and some $\lambda\in\R$.  Given arbitrary
$t$, choose $n$ so large that $t/n$ is in this range and iterate the local
leaf relation $n$ times.  This gives
\begin{equation}\label{eq:general-center-time}
  \varphi\bigl(\Wu_Y(Y^t(x))\bigr)
  =X^{\lambda t}\bigl(\Wu_X(\varphi(x))\bigr).
\end{equation}

Since $Y^t$ expands $\Wu_Y$ it is fairly clear from the above relation that $X^{\lambda t}$ should also expand $\Wu_X$ and, hence, $\lambda$ must be positive. Nevertheless, let us give a detailed argument for this; at the same time we will prepare for building the conjugacy. Define
\[
  \Theta_T=Y^{-T}\circ\varphi^{-1}\circ X^{\lambda T},
  \qquad T\geq0.
\]
For $z\in M$ and $0\leq s\leq1$, \eqref{eq:general-center-time} gives
\[
 Y^{-s}\bigl(\varphi^{-1}(X^{\lambda s}(z))\bigr)
 \in\Wu_Y\bigl(\varphi^{-1}(z)\bigr).
\]
The intrinsic unstable distance between these two points is uniformly
bounded by compactness.  Taking $z=X^{\lambda T}(y)$ and applying
backward contraction along $\Wu_Y$ therefore yields
\[
  d_Y^u\bigl(\Theta_{T+s}(y),\Theta_T(y)\bigr)
  \leq C e^{-\chi T},
  \qquad y\in M,\ 0\leq s\leq1.
\]
Hence $\Theta_T$ converges uniformly to a continuous onto map
$\Theta\colon M\to N$ satisfying
\begin{equation}\label{eq:general-factor}
  \Theta\circ X^{\lambda t}=Y^t\circ\Theta.
\end{equation}
If $\lambda=0$, then \eqref{eq:general-factor} and surjectivity imply that
$Y^t$ is the identity flow, which is impossible.

If $\lambda<0$ and $y'\in\Wu_X(y)$, then
\[
  d_X^u(X^{\lambda T}(y'),X^{\lambda T}(y))\longrightarrow0,\,\,T\to\infty.
\]
The images of corresponding points under $\varphi^{-1}$ lie on the same
$Y$-unstable leaf and their intrinsic distance also tends to zero.  Applying
$Y^{-T}$ only further contracts this distance.  Hence
$\Theta(y')=\Theta(y)$.  Minimality of $\Wu_X$ would make $\Theta$ constant,
contradicting surjectivity.  Thus $\lambda>0$.

Now let
\[
  \Phi_T=X^{-\lambda T}\circ\varphi\circ Y^T
  =\Theta_T^{-1}.
\]
Since $\lambda>0$, the same convergence argument, now using backward
contraction along $\Wu_X$, shows that $\Phi_T$ converges uniformly to a
continuous map $\psi\colon N\to M$.  Since $\Phi_T$ and $\Theta_T$ are mutual
inverses and both converge uniformly, their limits are inverse:
\[
  \psi=\Theta^{-1}.
\]
Hence $\psi$ is a homeomorphism.
Equation \eqref{eq:general-factor} gives: $\psi\circ Y^t=X^{\lambda t}\circ\psi$.
\end{proof}

\subsection{Proofs of Theorems~\ref{thm:general-anosov} and \ref{thm:codimension-one}, \Cref{add:nondegenerate}, and \Cref{cor:generic-conservative}}

We can now summarize the argument.  If every sufficiently small time is
centered, then \Cref{prop:general-centered} gives the first alternative.
Otherwise there are non-centered times arbitrarily close to zero, so
\Cref{lem:off-diagonal-su} gives 
$y'\in\mathcal I(y)\setminus\{y\}$, $y'\neq y$, and \Cref{lem:general-subbundle} gives
the second alternative.  This proves \Cref{thm:general-anosov}.

Suppose now that $E^u_X$ is not necessarily $C^1$, but that $X^t$ is locally
$su$-non-degenerate.  Then \Cref{lem:off-diagonal-su} rules out non-centered
times arbitrarily close to zero.  Hence every sufficiently small time is
centered, and \Cref{prop:general-centered} gives the conjugacy.  This proves
\Cref{add:nondegenerate}.

Now we prove \Cref{thm:codimension-one}. Suppose first that $\Ws_X$ and $\Wu_X$ are not jointly integrable.  Then the
preceding argument applies again.  If there are non-centered times arbitrarily
close to zero, \Cref{lem:codim-one-joint-integrability} gives joint
integrability, a contradiction.  Hence every sufficiently small time is
centered, and \Cref{prop:general-centered} gives the desired conjugacy.

It remains to consider the jointly integrable case.  By
\Cref{thm:plante-codimension-one}, $X^t$ is the suspension of a
codimension-one Anosov diffeomorphism
$f\colon\mathbb T_X\to\mathbb T_X$ with constant roof $C_X$. By Newhouse's theorem~\cite{Newhouse}, $\mathbb T_X$ is a torus.
The constant-height tori of the suspension are precisely the closures
of the strong unstable leaves of $X^t$.  Then
$\mathbb T_Y:=\varphi^{-1}(\mathbb T_X)$ is the closure of a strong unstable
leaf of $Y^t$.  In
particular, $\Wu_Y$ is not minimal, and by
\Cref{thm:anosov-alternative} we have that $Y^t$ is also a suspension of a
codimension-one Anosov diffeomorphism
$g\colon\mathbb T_Y\to\mathbb T_Y$ with constant roof $C_Y$.

Just as in \cite[Section~3.2]{Burniol}, one can see that $\varphi$
intertwines monodromies on $\mathbb T_Y$ and $\mathbb T_X$:
\[
  \varphi_*\circ g_*=f_*\circ \varphi_*.
\]
Franks' classification~\cite{Franks} therefore gives a homeomorphism
$h\colon\mathbb T_Y\to\mathbb T_X$ such that
$
  h\circ g=f\circ h.
$
Suspending $h$ and taking $\lambda=C_X/C_Y$ gives
$
  \psi\circ Y^t=X^{\lambda t}\circ\psi
$
with $\lambda>0$.  This proves \Cref{thm:codimension-one}.

We finish by proving \Cref{cor:generic-conservative}.
Recall that if $X^t\in\mathcal A(M)$, then $E^u_X$ is $C^1$; this follows by
applying \cite[Section~2.1]{GogolevRodriguezHertz} to the reversed flow.
Hence \Cref{thm:general-anosov} applies to such flows.  Further,
\cite[Proposition~4.1]{GogolevRodriguezHertz}, again applied after reversing
time, gives a $C^1$-open and
$C^\infty$-dense subset $\mathcal V\subset\mathcal A(M)$ such that, for
$X^t\in\mathcal V$, there is no non-trivial continuous
$DX^t$-invariant subbundle $E\subset E^s_X$ which integrates to a foliation
$\mathcal F\subset\Ws_X$ and for which $E\oplus E^u_X$ integrates to a
foliation subfoliated by $\mathcal F$ and $\Wu_X$.

\Cref{thm:general-anosov} applies to $X^t$ and $Y^t$, and the second
alternative is ruled out by the property of $\mathcal V$.  Hence we obtain
$\lambda>0$ and a conjugacy $\psi\colon M\to M$ such that
\[
  \psi\circ Y^t=X^{\lambda t}\circ\psi.
\]
Finally, \cite[Theorem~1.2]{GogolevRodriguezHertz} yields $C^1$ regularity of the conjugacy $\psi$.

\section{Proof of
\texorpdfstring{\Cref{thm:compact-extension}}{Theorem 5}}
\label{sec:compact-extensions}

We now prove \Cref{thm:compact-extension}.  Throughout this section,
$\pi\colon M\to B$ is a principal $K$-bundle and $X^t$ is an isometric
extension of the Anosov flow $\bar X^t$.

Fix an unstable equivalence $\varphi\colon N\to M$.  As in the previous proof, transplant $Y^t$ to $M$:
\[
  \widehat Y^t=\varphi\circ Y^t\circ\varphi^{-1}.
\]
It is a continuous flow preserving the strong unstable foliation of $X^t$.

\subsection{Centering the transplanted flow}

Fix $\delta_1>0$ so that the following local product intersection is defined
for $|t|<\delta_1$, and let
\begin{equation}
  p(y,t)=[y,\widehat Y^t(y)]
  =\Ws_{\mathrm{loc}}(y)
   \cap\Wcu_{\mathrm{loc}}\bigl(\widehat Y^t(y)\bigr).
\end{equation}
Call $t$ \emph{centered} if $p(y,t)=y$ for every $y\in M$.
As in \Cref{lem:centered-times}, the set of $y$ satisfying $p(y,t)=y$ is
closed and $\Wu$-saturated.  By minimality assumption if $p(y,t)=y$ for some $y\in M$ then $t$ is centered.

For $(s,k)\in\R\times K$ let
\begin{equation}
  \Phi_{s,k}=R_k\circ X^s.
\end{equation}
For small $s$ the maps $\Phi_{s,k}$ give local charts on the center leaves.

\begin{lemma}\label{lem:group-centering}
Every sufficiently small time is centered.
\end{lemma}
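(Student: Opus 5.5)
We argue by contradiction, following the scheme of \Cref{lem:centered-accumulation} and \Cref{lem:off-diagonal-su}, with $\Wcu$ in place of $\Wou$ and the local $su$-non-degeneracy hypothesis in place of the subbundle step.

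\emph{Step 1: accumulation of centered times.} We first record the analogue of \Cref{lem:centered-accumulation}. Centeredness of $t$ is equivalent to $\widehat Y^t(y)\in\Wcu_{\mathrm{loc}}(y)$ for every $y$. The set of centered times is closed, and it is closed under addition of small times, because $\Wcu$ is a foliation (dynamical coherence). So if there are non-zero centered times $t_n\to0$, then the multiples $k_nt_n$ show that every $|t|<\delta_1$ is centered. We may therefore assume that no small non-zero time is centered; by the minimality remark preceding the lemma, this means $p(y,t)\neq y$ for every $y$ and every small $t\neq0$.

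\emph{Step 2: renormalization.} Copy the proof of \Cref{lem:off-diagonal-su}. Fix $y_0$ and $t_n\searrow0$, and set $p_n=p(y_0,t_n)\in\Ws_{\mathrm{loc}}(y_0)\setminus\{y_0\}$. Choose $T_n$ with $\ds(X^{-T_n}y_0,X^{-T_n}p_n)=L$, and pass to limits $y_n\to y$, $y_n'\to y'$ with $y'\in\Ws_{\mathrm{loc}}(y)$ and $\ds(y,y')=L$, so $y'\neq y$.

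The analogue of \Cref{lem:general-local-decomposition} now uses center holonomy. Write $\widehat Y^t(z)$ as lying in the strong unstable leaf of $q(z,t)=\Phi_{s,k}(p(z,t))$, where $(s,k)=(s(z,t),k(z,t))$ is small, with $\du(q(z,t),\widehat Y^t(z))\le C_0$. Define $F_n,P_n,Q_n$ as before. Two estimates carry over:
\begin{itemize}
\item $Q_n(w)=X^{-T_n}\Phi_{s,k}(p(z,t_n))$, and since $X^{-T_n}$ commutes with $R_k$ and $X^s$, we get $Q_n(w)=\Phi_{s,k}(P_n(w))$, which is uniformly close to $P_n(w)$.
\item $F_n$ is uniformly $\du$-close to $Q_n$, by backward contraction along $\Wu$.
\end{itemize}
The plaque-convergence argument then shows that the local $\Ws$-holonomy from $\Wcu_{\mathrm{loc}}(y)$ to $\Wcu_{\mathrm{loc}}(y')$ carries an open strong unstable plaque through $y$ into $\Wu(y')$. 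Points $P_n(w_n)$ lie in $\Ws_{\mathrm{loc}}(w_n)$, and their limits lie in $\Wu_\rho(y')$, which is exactly the configuration forbidden by local $su$-non-degeneracy (\Cref{def:local-su-nondegenerate}). This contradiction proves the lemma.

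\emph{Main obstacle.} The delicate point is uniformity in the renormalization. Isometric extensions are not uniformly hyperbolic along $K$, and $\Wcu$-local product structure has to be used in place of $\Wou$. Since $R_k$ is an isometry commuting with $X^t$, the center displacement $\Phi_{s,k}$ is not expanded under $X^{-T_n}$, so it stays uniformly small; this is what makes the argument work. We also need the intrinsic stable distance to grow under $X^{-t}$, so that $T_n$ exists and $T_n\to\infty$. Finally, we must check that the limiting inclusion is a statement about $\Ws$-holonomy between $\Wcu$ plaques, as opposed to saturation: the limit point lies in $\Ws_{\mathrm{loc}}(w)\cap\Wcu_{\mathrm{loc}}(y')$, which identifies it with the holonomy image of $w$.
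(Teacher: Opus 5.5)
Your proposal is correct and essentially matches the paper's proof. You argue by contradiction and rerun the renormalization of \Cref{lem:off-diagonal-su}: since $\Phi_{s,k}$ commutes with $X^{-T_n}$, $Q_n=\Phi_{s,k}\circ P_n$ stays uniformly close to $P_n$, and backward contraction makes $F_n$ uniformly close to $Q_n$. The plaque limit then gives a local $su$-integrability relation forbidden by local $su$-non-degeneracy. Two small remarks: your Step~1 is valid but unnecessary, since any sequence of non-centered times $t_n\to0$ suffices and minimality already gives $p(y_0,t_n)\neq y_0$. Also, the limit argument most directly shows that the stable holonomy from $\Wcu_{\mathrm{loc}}(y')$ carries $\Wu_\rho(y')$ into $\Wu(y)$, which is the forbidden configuration with the roles of $y,y'$ swapped, so you do not need to reverse the direction.
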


\begin{proof}
This is almost the same as \Cref{lem:off-diagonal-su}; we include a brief
proof for completeness.  Suppose otherwise and choose non-centered times
with $0<|t_n|\to0$.  Local product
structure inside $\Wcu$ gives unique small $\sigma(y,t)\in\R$ and
$\kappa(y,t)\in K$ such that
\[
  q(y,t):=\Phi_{\sigma(y,t),\kappa(y,t)}(p(y,t))
  \in\Wu(\widehat Y^t(y)).
\]
These coordinates tend uniformly to $(0,\mathrm{id}_K)$ as $t_n\to0$,
$\du(q(y,t_n),\widehat Y^{t_n}(y))$ is uniformly bounded, and, for fixed $t_n$,
$q(\,\mathord{\cdot}\,,t_n)$ restricts to a center-stable holonomy
homeomorphism on each unstable leaf.

Fix $y_0$ and let $p_n=p(y_0,t_n)\neq y_0$.  As in
\Cref{lem:off-diagonal-su}, choose $T_n\to\infty$ so that, after passing to
a subsequence,
\[
  X^{-T_n}(y_0)\longrightarrow y,
  \qquad
  X^{-T_n}(p_n)\longrightarrow
  y'\in\Ws_{\mathrm{loc}}(y)\setminus\{y\}.
\]
Commutation of $X^t$ with the fiber action gives
\[
  \sup_z d\bigl(X^{-T_n}(q(z,t_n)),X^{-T_n}(p(z,t_n))\bigr)
  \longrightarrow0,
\]
while the uniform unstable bound and backward contraction give
\[
  \sup_z \du\bigl(X^{-T_n}(q(z,t_n)),
  X^{-T_n}(\widehat Y^{t_n}(z))\bigr)\longrightarrow0;
\]
here the suprema are taken over $z\in\Wu(y_0)$.  The plaque-limit argument from
\Cref{lem:off-diagonal-su} therefore produces a non-trivial local
$su$-integrability relation.  This contradicts local $su$-non-degeneracy, so
every sufficiently small time is centered.
\end{proof}

\subsection{The center homomorphism}

By \Cref{lem:group-centering}, the path $t\mapsto \widehat Y^t(y)$ lies in $\Wcu_{\mathrm{loc}}(y)$. Hence there is a unique pair
\[
  (\tau(y,t),k(y,t))\in\R\times K
\]
near $(0,\mathrm{id}_K)$ such that
\begin{equation}
  \widehat Y^t(y)\in
  \Wu\bigl(\Phi_{\tau(y,t),k(y,t)}(y)\bigr).
\end{equation}
Because $\widehat Y^t$, $X^t$, and the isometric action all preserve the
strong unstable foliation, local uniqueness implies that $\tau(y,t)$ and
$k(y,t)$ are constant along strong unstable leaves.  Minimality of $\Wu$ implies that they are independent of $y$; we write them as
$\tau(t)$ and $k(t)$.

 Local uniqueness of the center coordinate gives
\[
  \tau(t+s)=\tau(t)+\tau(s),
  \qquad
 k(s)k(t)=k(t+s)=k(t)k(s)
\]
whenever $s,t$, and $s+t$ are sufficiently small.  Consequently
\[
  \tau(t)=\lambda t
\]
for small $t$ and some $\lambda\in\R$, and the local map $k$ extends uniquely
to a continuous homomorphism $k\colon\R\to K$.  Define the associated
rotation flow by $R^t(y)=y\cdot k(t)$.  Iterating the local leaf relation
shows that, for every $t\in\R$,
\begin{equation}\label{eq:group-leaf-relation}
  \varphi\bigl(\Wu_Y(Y^t(x))\bigr)=\Wu(Z^t(\varphi(x))),
\end{equation}
where
\begin{equation}\label{eq:twisted-target-flow}
  Z^t:=R^t\circ X^{\lambda t}.
\end{equation}

The proof that $\lambda>0$ is the same as in
\Cref{prop:general-centered}, with very minor adjustments.

\subsection{The correcting maps}

With $Z^t$ as in \eqref{eq:twisted-target-flow},  $\lambda>0$, define
the correcting maps
\begin{equation}
  \varphi_T=Z^{-T}\circ\varphi\circ Y^T,
  \qquad T\geq0,
\end{equation}
for which \eqref{eq:group-leaf-relation} gives
$\varphi_T(x)\in\Wu(\varphi(x))$.  As in the proof of
\Cref{prop:general-centered}---now using $Z^{-T}$, which also contracts $\Wu$, ---the maps
$\varphi_T$ converge uniformly to a continuous $\psi\colon N\to M$ such that
\begin{equation}
  \psi(x)\in\Wu(\varphi(x)),
  \qquad
  \sup_x \du(\psi(x),\varphi(x))<\infty,
\end{equation}
and, passing to the limit in $\varphi_T\circ Y^t=Z^t\circ\varphi_{T+t}$, we have
\begin{equation}
  \psi\circ Y^t
  =Z^t\circ\psi
  =R^t\circ X^{\lambda t}\circ\psi.
\end{equation}

Finally, set $\Theta_T=Y^{-T}\circ\varphi^{-1}\circ Z^T$.  The same argument
shows that $\Theta_T$ converges uniformly to a continuous onto map $\Theta$.
Since $\varphi_T=\Theta_T^{-1}$ and both families converge uniformly, the
limits are mutually inverse; hence $\psi$ is a homeomorphism.  This finishes
the proof of \Cref{thm:compact-extension}.

\section{Geometric applications}
\label{sec:geometric-applications}

The corollaries follow by combining the dynamical results above with the
standard marked-length-spectrum rigidity theorems.  We give the short
reductions to these well-known results.

\subsection{Geodesic flows}

\begin{proof}[Proof of \Cref{cor:geodesic-rigidity}]
The geodesic flows are contact Anosov flows, so
\Cref{prop:contact-su-nondegenerate} and~\Cref{add:nondegenerate} give $\lambda>0$ and a
homeomorphism $\psi\colon T^1N\to T^1M$ satisfying
$\psi\circ Y^t=X^{\lambda t}\circ\psi$.  Since $n\geq3$, this conjugacy
induces an isomorphism $\rho\colon\pi_1(N)\to\pi_1(M)$ and hence a homotopy
equivalence.  Comparison of periodic orbits gives
\[
  \ell_{g_0}(\rho[\gamma])
  =\lambda\,\ell_g([\gamma])
  =\ell_{\lambda^2g}([\gamma]),
  \qquad [\gamma]\in[\pi_1(N)].
\]
Thus $(N,\lambda^2g)$ and $(M,g_0)$ have the same marked length spectrum.
Marked length spectrum rigidity for negatively curved locally symmetric
metrics \cite{Hamenstadt,BCG} gives the asserted isometry.
\end{proof}

\begin{proof}[Proof of \Cref{cor:geodesic-local-rigidity}]
Again \Cref{prop:contact-su-nondegenerate} and~\Cref{add:nondegenerate} give
$\lambda>0$ and a conjugacy
$\psi\circ Y^t=X^{\lambda t}\circ\psi$.  The correcting maps in the proof
of \Cref{add:nondegenerate} show that $\psi$ is homotopic to the given
unstable equivalence, hence to the identity.  It therefore preserves the
marking, and comparison of periods gives again
\[
  \ell_{g_0}([\gamma])
  =\lambda\,\ell_g([\gamma])
  =\ell_{\lambda^2g}([\gamma]),
  \qquad [\gamma]\in[\pi_1(M)].
\]
For any fixed non-trivial $[\gamma_0]$, the same identity shows that
$\lambda=\ell_{g_0}([\gamma_0])/\ell_g([\gamma_0])$, so $\lambda\to1$ as
$g\to g_0$.  After decreasing $\varepsilon$, local marked-length-spectrum
rigidity \cite[Corollary~1.1]{GuillarmouLefeuvre} applies to
$\lambda^2g$ and $g_0$ and gives the asserted isometry.
\end{proof}

\subsection{Frame flows}

\begin{proof}[Proof of \Cref{rem:frame-mls}]
Each oriented frame flow is an isometric extension on the principal bundle
$\operatorname{Fr}(M_i,g_i)\to T^1M_i$ with fiber $SO(n-1)$; local
$su$-non-degeneracy follows from \Cref{prop:contact-su-nondegenerate}.
Write $R_k^X$ and $R_k^Y$ for the
right $SO(n-1)$-actions on $\operatorname{Fr}(M_1,g_1)$ and $\operatorname{Fr}(M_2,g_2)$, respectively.
The actions $R^X$ and $R^Y$ commute with $X^t$ and $Y^t$, respectively.  Thus
\Cref{thm:compact-extension}
gives $\lambda>0$, a homomorphism $\kappa\colon\R\to SO(n-1)$, and a homeomorphism
$\psi\colon \operatorname{Fr}(M_2,g_2)\to \operatorname{Fr}(M_1,g_1)$ such that
\[
  \psi\circ Y^t=Z^t\circ\psi,
  \qquad Z^t=R_{\kappa(t)}^X\circ X^{\lambda t}.
\]

We only need to check that $\psi$ preserves the compact fibers.  For
$k\in SO(n-1)$, define
\[
  Q_k:=\psi\circ R_{k^{-1}}^Y\circ\psi^{-1}.
\]
Since $R_{k^{-1}}^Y$ commutes with $Y^t$, the map $Q_k$ commutes with $Z^t$.
When $k$ is close to the identity element, the map $Q_k$ is uniformly close to the
identity map, and hyperbolicity implies that
$Q_k(x)\in W^c_Z(x)=W^c_X(x)$.  Hence there are continuous functions
$a_k(x)\in SO(n-1)$ and $s_k(x)\in\R$ such that
\[
  Q_k(x)=R_{a_k(x)}^X\bigl(X^{s_k(x)}(x)\bigr).
\]
Uniqueness of these center coordinates and preservation of strong unstable
leaves imply that $a_k$ and $s_k$ are constant along $\Wu_X$-leaves.
Minimality makes them independent of $x$; write the resulting constants as
$a(k)$ and $s(k)$.

We have $Q_{kh}=Q_k\circ Q_{h}$ for all $k,h\in SO(n-1)$. Finite-order elements are
dense in $SO(n-1)$.  Pick such an element $k$ close to the identity such that $k^N=\mathrm{id}$ for some $N\geq 1$.  Then
\[
  \mathrm{id}=Q_k^N
  =R_{a(k)^N}^X\circ X^{N s(k)}.
\]
Projecting to the base gives $\bar X^{N s(k)}=\mathrm{id}$, and hence
$s(k)=0$.  Such $k$ are dense in the neighborhood of $\mathrm{id}$; hence, $s$ vanishes near the identity.
Thus $Q_k$ is vertical for $k$ near the identity.  Since $SO(n-1)$ is
connected, every element is a finite product of elements in this
neighborhood, and the group law shows that $Q_k$ is vertical for every
$k\in SO(n-1)$.

For $y\in \operatorname{Fr}(M_2,g_2)$, its fiber is its $SO(n-1)$-orbit, and
\[
  \psi\bigl(R_k^Y(y)\bigr)=Q_{k^{-1}}\bigl(\psi(y)\bigr),
  \qquad k\in SO(n-1).
\]
Since $Q_{k^{-1}}$ is vertical, $\psi$ maps the fiber through $y$ into the
fiber through $\psi(y)$.  Its restriction to this fiber is a continuous
injection between compact connected manifolds of the same dimension.  By
invariance of domain and compactness, $\psi$ maps fibers onto fibers.
Consequently $\psi$ descends to a conjugacy of the base geodesic flows up to
the constant time change.
\end{proof}

\bigskip
\noindent
Department of Mathematics, The Ohio State University, Columbus, OH 43210, USA

\noindent
\textit{Email address:} \texttt{gogolyev.1@osu.edu}


\begin{thebibliography}{FMP26}

\bibitem[A95]{Abe}
R. Abe,
\emph{Geometric approach to rigidity of horocycles},
Tokyo J. Math. 18 (1995), no.~2, 271--283.

\bibitem[A67]{Anosov}
D. V. Anosov,
\emph{Geodesic flows on closed Riemannian manifolds of negative curvature},
Trudy Mat. Inst. Steklov. 90 (1967), 3--210; English translation,
Proc. Steklov Inst. Math. 90 (1967), 1--235.

%\bibitem[B95]{Barbot}
%T. Barbot,
%\emph{Caract\'erisation des flots d'Anosov en dimension 3 par leurs
%feuilletages faibles},
%Ergodic Theory Dynam. Systems 15 (1995), no.~2, 247--270.

\bibitem[BCG95]{BCG}
G. Besson, G. Courtois and S. Gallot,
\emph{Entropies et rigidit\'es des espaces localement sym\'etriques de
courbure strictement n\'egative},
Geom. Funct. Anal. 5 (1995), no.~5, 731--799.

\bibitem[B75]{Brin}
M. I. Brin,
\emph{Topological transitivity of one class of dynamical systems and flows
of frames on manifolds of negative curvature},
Funct. Anal. Appl. 9 (1975), no.~1, 8--16.

\bibitem[BC25]{Burniol}
S. Burniol Clotet,
\emph{Rigidity of the unstable foliation},
arXiv:2511.22738, 2025.

\bibitem[BG80]{BrinGromov}
M. Brin and M. Gromov,
\emph{On the ergodicity of frame flows},
Invent. Math. 60 (1980), no.~1, 1--7.

\bibitem[FMP26]{FMP}
S. Fenley, K. Mann and R. Potrie,
\emph{Exotic codimension one Anosov flows},
preprint, arXiv:2605.25082, 2026.

\bibitem[F87]{Flaminio}
L. Flaminio,
\emph{An extension of Ratner's rigidity theorem to $n$-dimensional
hyperbolic space},
Ergodic Theory Dynam. Systems 7 (1987), no.~1, 73--92.

\bibitem[FS90]{FlaminioSpatzier}
L. Flaminio and R. J. Spatzier,
\emph{Geometrically finite groups, Patterson--Sullivan measures and
Ratner's rigidity theorem},
Invent. Math. 99 (1990), no.~3, 601--626.

\bibitem[F70]{Franks}
J. Franks,
\emph{Anosov diffeomorphisms},
Global Analysis (Proc. Sympos. Pure Math., Vol.~XIV, Berkeley, Calif., 1968),
Amer. Math. Soc., Providence, R.I., 1970, 61--93.

\bibitem[GRH23]{GogolevRodriguezHertz}
A. Gogolev and F. Rodriguez Hertz,
\emph{Smooth rigidity for codimension one Anosov flows},
Proc. Amer. Math. Soc. 151 (2023), no.~7, 2975--2988.

\bibitem[GRH25]{GogolevRodriguezHertz25}
A. Gogolev and F. Rodriguez Hertz,
\emph{Smooth rigidity for very non-algebraic Anosov diffeomorphisms of
codimension one},
Israel J. Math. 269 (2025), no.~2, 801--852.

\bibitem[GL19]{GuillarmouLefeuvre}
C. Guillarmou and T. Lefeuvre,
\emph{The marked length spectrum of Anosov manifolds},
Ann. of Math. (2) 190 (2019), no.~1, 321--344.

\bibitem[H99]{Hamenstadt}
U. Hamenst\"adt,
\emph{Cocycles, symplectic structures and intersection},
Geom. Funct. Anal. 9 (1999), no.~1, 90--140.

\bibitem[L04]{Liverani}
C. Liverani,
\emph{On contact Anosov flows},
Ann. of Math. (2) 159 (2004), no.~3, 1275--1312.

\bibitem[M83]{Marcus}
B. Marcus,
\emph{Topological conjugacy of horocycle flows},
Amer. J. Math. 105 (1983), no.~3, 623--632.

%\bibitem[L23]{Lefeuvre}
%T. Lefeuvre,
%\emph{Isometric extensions of Anosov flows via microlocal analysis},
%Commun. Math. Phys. 399 (2023), no.~1, 453--479.

\bibitem[N70]{Newhouse}
S. E. Newhouse,
\emph{On codimension one Anosov diffeomorphisms},
Amer. J. Math. 92 (1970), 761--770.

\bibitem[P72]{Plante}
J. F. Plante,
\emph{Anosov flows},
Amer. J. Math. 94 (1972), 729--754.

\bibitem[R82]{Ratner}
M. Ratner,
\emph{Rigidity of horocycle flows},
Ann. of Math. (2) 115 (1982), no.~3, 597--614.

\end{thebibliography}
\end{document}